\theoremstyle{definition}
\newtheorem{remark}{Remark}[section]
\newtheorem{remarks}[remark]{Remarks}
 \theoremstyle{plain}
\newtheorem{definition}[remark]{Definition}
\newtheorem{theorem}[remark]{Theorem}
\newtheorem{proposition}[remark]{Proposition}
\newtheorem{corollary}[remark]{Corollary}
\newtheorem{lemma}[remark]{Lemma}
\DeclareMathOperator{\Id}{id}
\DeclareMathOperator{\Ad}{Ad}
\DeclareMathOperator{\Ind}{Ind}
\DeclareMathOperator{\Mor}{Mor}
\newcommand{\smalldiagram}{}
\newcommand{\hDelta}{\widehat{\Delta}}
\newcommand{\halpha}{\widehat{\alpha}}
\newcommand{\hdelta}{\widehat{\delta}}
\newcommand{\lnspan}{\big[} \newcommand{\rnspan}{\big]}
\newcommand{\frakB}{\mathfrak{B}}
\newcommand{\frakBo}{\frakB^{\dag}}
\newcommand{\frakC}{\mathfrak{C}}
\newcommand{\frakH}{\mathfrak{H}}
\newcommand{\frakK}{\mathfrak{K}}
\newcommand{\cbasel}[2]{(\mathfrak{#1}, \mathfrak{#2}, \mathfrak{#1}^{\dag})}
\newcommand{\cbases}[2]{{_{\mathfrak{#1}}}\mathfrak{#2}_{\mathfrak{#1}^{\dag\!}}} 
\newcommand{\cbaseos}[2]{{_{\mathfrak{#1}^{\dag\!}}}\mathfrak{#2}_{\mathfrak{#1}}} 
\newcommand{\cbasesb}{\cbases{B}{H}}
\newcommand{\cbasesc}{\cbases{C}{K}}
\newcommand{\cbaseosc}{\cbaseos{C}{K}}
\newcommand{\cbaseosb}{\cbaseos{B}{H}}
\newcommand{\lt}{\smalltriangleleft}
\newcommand{\rt}{\smalltriangleright}
\newcommand{\hbeta}{\widehat{\beta}}
\newcommand{\hA}{\widehat{A}} \newcommand{\ha}{\widehat{a}}
\newcommand{\mycong}{\xrightarrow{\cong}}
\newcommand{\rtensor}[3]{ {_{#1}\! \underset{#2}{\otimes}\! {}_{#3}}}
\newcommand{\htensor}[2]{\rtensor{#1}{\frakH}{#2}}
\newcommand{\rtensorab}{\htensor{\alpha}{\beta}}
\newcommand{\rtensorcb}{\htensor{\gamma}{\beta}}
\newcommand{\rtensorca}{\htensor{\gamma}{\alpha}}
\newcommand{\rtensorh}{\underset{\frakH}{\otimes}}
\newcommand{\fibre}[3]{ {_{#1}\! \underset{#2}{\ast}\! {}_{#3}}}
\newcommand{\rfibre}[2]{ {_{#1}\ast_{#2}}}
\newcommand{\hfibre}[2]{ {_{#1}\! \underset{\frakH}{\ast}\! {}_{#2}}}
\newcommand{\hfibrecb}{ {_{\gamma}\! \underset{\frakH}{\ast}\! {}_{\beta}}}
\newcommand{\fsource}{\htensor{\hbeta}{\alpha}}
\newcommand{\frange}{\rtensorab}
\newcommand{\sfsource}{\fsource}%{\stensor{\hbeta}{\alpha}}
\newcommand{\sfrange}{\frange}%{\stensor{\alpha}{\beta}}
\newcommand{\tl}{\ensuremath \olessthan}
\newcommand{\tr}{\ensuremath \ogreaterthan}
\newcommand{\Hsource}{H \fsource H}
\newcommand{\Hrange}{H \frange H}
\newcommand{\checkHsource}{H \htensor{\halpha}{\hbeta} H}
\newcommand{\checkHrange}{H \htensor{\hbeta}{\alpha} H}
\newcommand{\hatHsource}{H \htensor{\alpha}{\beta} H}
\newcommand{\hatHrange}{H \htensor{\beta}{\halpha} H}
\newcommand{\checkV}{\widecheck{V}}
\newcommand{\hatV}{\widehat{V}}
\newcommand{\sHsource}{H \sfsource H}
\newcommand{\sHrange}{H \sfrange H}
\newcommand{\HfibreK}{H \rtensorab K}
\newcommand{\Hone}{H \sfsource H \sfsource H}
\newcommand{\Htwo}{H \sfrange H \sfsource  H}
\newcommand{\Hthree}{H \sfrange H \sfrange H}
\newcommand{\Hfour}{(\sHsource) \rtensor{\alpha \lt \alpha}{\frakH}{\beta} H}
\newcommand{\Hfive}{H \rtensor{\hbeta}{\frakH}{\alpha \rt \alpha} (\sHrange)}
\newcommand{\Hfourlt}{H \sfsource H \rtensor{\beta}{\frakH}{\alpha} H}
\newcommand{\Hfourrt}{\big(\sHrange\big) \rtensor{\hbeta \lt
    \beta}{\frakH}{\alpha} H}
\newcommand{\fibreab}{\fibre{\alpha}{\frakH}{\beta}}
\newcommand{\AfibreA}{A \fibreab A}
\newcommand{\AfibreB}{A \fibreab B}
\newcommand{\kalpha}[1]{|\alpha\rangle_{\leg{#1}}}
\newcommand{\balpha}[1]{\langle\alpha|_{\leg{#1}}}
\newcommand{\kbeta}[1]{|\beta{}\rangle_{\leg{#1}}}
\newcommand{\bbeta}[1]{\langle\beta|_{\leg{#1}}}
\newcommand{\khbeta}[1]{|\hbeta{}\rangle_{\leg{#1}}}
\newcommand{\bhbeta}[1]{\langle\hbeta|_{\leg{#1}}}
\newcommand{\khalpha}[1]{|\halpha{}\rangle_{\leg{#1}}}
\newcommand{\bhalpha}[1]{\langle\halpha|_{\leg{#1}}}
\newcommand{\cfact}{\ensuremath \mathrm{C}^{*}\mathrm{\text{-}fact}}
\newcommand{\leg}[1]{[#1]}
\newcommand{\hAbimod}{({_{\alpha}\hA_{\hbeta}},\hDelta)}
\newcommand{\Abimod}{({_{\beta}A_{\alpha}},\Delta)}
\title{$C^{*}$-pseudo-Kac systems and duality for coactions of
  concrete Hopf $C^{*}$-bimodules}
\author{Thomas Timmermann\\[1ex]
  \texttt{timmermt@math.uni-muenster.de}\\ SFB 478
  ``Geometrische Strukturen in der Mathematik''\\ Hittorfstr.\
  27, 48149 M\"unster}
\date{\today}
\begin{document}
 \xyrequire{matrix} \xyrequire{arrow}

\maketitle 

\abstract{
  We study coactions of concrete Hopf $C^{*}$-bimodules in the
  framework of (weak) $C^{*}$-pseudo-Kac systems, define reduced
  crossed products and dual coactions, and prove an analogue of
  Baaj-Skandalis duality. 
}

\section{Introduction}

In a seminal article \cite{baaj:2}, Baaj and Skandalis developed a
duality theory for coactions of Hopf $C^{*}$-algebras on
$C^{*}$-algebras that extends the Takesaki-Takai duality of
actions of locally compact abelian groups to all locally compact
groups and, more generally, to all regular locally compact quantum
groups. More precisely, Baaj and Skandalis introduce the notion of a
Kac system which  consists of a regular multiplicative unitary and an
additional symmetry,  consider coactions of the two Hopf  $C^{*}$-algebras
(the ``legs'') of the multiplicative unitary, define for every coaction
of each leg a reduced crossed product that carries a coaction of
the other leg, and show that two applications of this construction
yield a stabilization of the original coaction.

In this article, we extend the duality theory of Baaj and Skandalis to
coactions of concrete Hopf $C^{*}$-bimodules, applying the methods and
concepts introduced in \cite{timmer:cpmu} to the
constructions in \cite{baaj:2}. In particular, our theory
covers coactions of the Hopf $C^{*}$-bimodules associated to a locally compact
groupoid.   An article on examples is in preparation.

Let us mention that a similar duality theory like the one presented
here was developed in the PhD thesis of the author
\cite{timmermann}. Our new approach allows us to drop a
rather restrictive condition (decomposability) needed in
\cite{timmermann}, and to work in the framework of $C^{*}$-algebras
instead of the somewhat exotic $C^{*}$-families. Moreover, the
approach presented here greatly simplifies the complex assumptions
needed \cite{timmermann}.

This work was supported by the SFB 478 ``Geometrische Strukturen in
der Mathematik'' which is funded by the Deutsche
  Forschungsgemeinschaft (DFG).

\paragraph{Organization} This article is organized as follows:

First, we fix notation and terminology, and summarize some background
on (Hilbert) $C^{*}$-modules.

In Section 2, we recall the definition of
$C^{*}$-pseudo-multiplicative unitaries and concrete Hopf
$C^{*}$-bimodules given in \cite{timmer:cpmu}.

In Section 3, we introduce weak $C^{*}$-pseudo-Kac systems which
provide the framework for the construction of reduced crossed
products. 

In Section 4, we consider coactions of concrete Hopf
$C^{*}$-bimodules and construct reduced crossed products and duals for
such coactions.

In Section 5, we introduce $C^{*}$-pseudo-Kac systems and establish an
analogue of Baaj-Skandalis duality for coactions of concrete Hopf
$C^{*}$-bimodules.

In Section 6, we associate to each locally compact groupoid a
$C^{*}$-pseudo-Kac system.

\paragraph{Preliminaries}

Given a subset $Y$ of a normed space $X$, we denote by $[Y] \subseteq X$
the closed linear span of $Y$.

Given a Hilbert space $H$ and a subset $X \subseteq {\cal L}(H)$, we
denote by $X'$  the commutant of $X$. Given  Hilbert spaces $H$, $K$,
a $C^{*}$-subalgebra $A \subseteq {\cal L}(H)$, and a $*$-homomorphism
$\pi \colon A \to {\cal L}(K)$, we put
\begin{align*}
  {\cal L}^{\pi}(H,K) := \{ T \in {\cal L}(H,K) \mid Ta = \pi(a)T
  \text{ for all } a \in A\};
\end{align*}
thus, for example, $A' = {\cal L}^{\Id_{A}}(H)$.

We shall make extensive use of (right) $C^{*}$-modules, also known as
Hilbert $C^{*}$-modules or Hilbert modules. A standard reference is
\cite{lance}.

All sesquilinear maps like inner products of Hilbert spaces
or $C^{*}$-modules are assumed to be conjugate-linear in the first
component and linear in the second one.

Let $A$ and $B$ be $C^{*}$-algebras.  
Given $C^{*}$-modules $E$ and $F$ over $B$, we denote the space of all
adjointable operators $E\to F$ by ${\cal L}_{B}(E,F)$.

Let $E$ and $F$ be $C^{*}$-modules over $A$ and $B$, respectively, and
let $\pi \colon A \to {\cal L}_{B}(F)$ be a $*$-homomorphism. Then one
can form the internal tensor product $E \otimes_{\pi} F$, which is a
$C^{*}$-module over $B$ \cite[Chapter 4]{lance}. This $C^{*}$-module
is the closed linear span of elements $\eta \otimes_{A} \xi$, where
$\eta \in E$ and $\xi \in F$ are arbitrary, and $\langle \eta
\otimes_{\pi} \xi|\eta' \otimes_{\pi} \xi'\rangle = \langle
\xi|\pi(\langle\eta|\eta'\rangle)\xi'\rangle$ and $(\eta \otimes_{\pi}
\xi)b=\eta \otimes_{\pi} \xi b$ for all $\eta,\eta' \in E$, $\xi,\xi'
\in F$, and $b \in B$.  We denote the internal tensor product by
``$\tr$''; thus, for example, $E \tr_{\pi} F=E \otimes_{\pi} F$. If
the representation $\pi$ or both $\pi$ and $A$ are understood, we
write ``$\tr_{A}$'' or  ``$\tr$'', respectively, instead of
$"\tr_{\pi}$''. 

Given $E$, $F$ and $\pi$ as above, we define a {\em flipped internal
  tensor product} $F {_{\pi}\tl} E$ as follows. We equip the algebraic
tensor product $F \odot E$ with the structure maps $\langle \xi \odot
\eta | \xi' \odot \eta'\rangle := \langle \xi| \pi(\langle
\eta|\eta'\rangle) \xi'\rangle$, $(\xi \odot \eta) b := \xi b \odot
\eta$, and by factoring out the null-space of the semi-norm
$\zeta\mapsto \| \langle \zeta|\zeta\rangle\|^{1/2}$ and taking
completion, we obtain a $C^{*}$-$B$-module $F {_{\pi}\tl} E$.  This is
the closed linear span of elements $\xi {_{\pi}\tl} \eta$, where $\eta \in E$
and $\xi \in F$ are arbitrary, and $\langle \xi {_{\pi}\tl} \eta|\xi'
{_{\pi}\tl} \eta'\rangle = \langle \xi|\pi(\langle\eta|\eta'\rangle)\xi'\rangle$ and
$(\xi {_{\pi}\tl} \eta)b=\xi b {_{\pi}\tl} \eta$ for all $\eta,\eta' \in E$, $\xi,\xi'
\in F$, and $b\in B$. As above, we write ``${_{A}\tl}$'' or simply
``$\tl$'' instead of ``${_{\pi}\tl}$'' if the representation $\pi$ or
both  $\pi$ and $A$ are understood, respectively.

Evidently, the usual and the flipped internal tensor product are related by
a unitary map $\Sigma \colon F \tr E \mycong E \tl F$, $\eta \tr \xi
\mapsto \xi \tl \eta$.

We shall frequently consider the following  kind of $C^{*}$-modules.
Let $H$ and $K$ be Hilbert spaces. We call a subset $\Gamma \subseteq
{\cal L}(H,K)$ a {\em concrete $C^{*}$-module} if $[ \Gamma
\Gamma^{*} \Gamma] = \Gamma$.  If $\Gamma$ is a concrete
$C^{*}$-module, then evidently $\Gamma^{*}$ is a concrete
$C^{*}$-module as well, the space $B:=[ \Gamma^{*}
\Gamma]$ is a $C^{*}$-algebra, and $\Gamma$ is a full right
$C^{*}$-module over $B$ with respect to the inner product given by
$\langle \zeta|\zeta'\rangle=\zeta^{*}\zeta'$ for all $\zeta,\zeta'
\in \Gamma$.

\section{$C^{*}$-pseudo-multiplicative unitaries and concrete Hopf
  $C^{*}$-bimodules}

\label{section:basics}
We recall several constructions and definitions from
\cite{timmer:cpmu} which are fundamental to the duality theory
developed in the following sections.

\paragraph{$C^{*}$-bases and  $C^{*}$-factorizations}
$C^{*}$-bases and $C^{*}$-factorizations are simple but convenient
concepts used in the definition of the $C^{*}$-relative tensor
product; for details, see \cite[Section 2.1]{timmer:cpmu}.  

A {\em $C^{*}$-base} is a triple $\cbasel{B}{H}$, shortly written
$\cbases{B}{H}$, consisting of a Hilbert space $\frakH$ and two
commuting nondegenerate $C^{*}$-algebras $\frakB,\frakB^{\dag}
\subseteq {\cal L}(\frakH)$. We shall mainly be interested in the
following example of $C^{*}$-bases: If $\mu$ is a proper KMS-weight
on a $C^{*}$-algebra $B$, then the triple
$(\pi_{\mu}(B),H_{\mu},J_{\mu}\pi_{\mu}(B)J_{\mu})$ is a $C^{*}$-base,
where $H_{\mu}$ denotes the GNS-space, $\pi_{\mu} \colon B \to {\cal
  L}(H_{\mu})$ the GNS-representation, and $J_{\mu} \colon H_{\mu} \to
H_{\mu}$ the modular conjugation associated to $\mu$.

A {\em $C^{*}$-factorization} of a Hilbert space $H$ with respect to a
$C^{*}$-base $\cbases{B}{H}$ is a closed subspace $\alpha \subseteq
{\cal L}(\frakH,H)$ satisfying $[\alpha^{*}\alpha] =
\frakB$, $[\alpha \frakB] = \alpha$, and $[\alpha
\frakH] = H$.  We denote the set of all $C^{*}$-factorizations
of a Hilbert space $H$ with respect to a $C^{*}$-base $\cbases{B}{H}$
by $\cfact(H;\cbases{B}{H})$.

Let $\alpha$ be a $C^{*}$-factorization of a Hilbert space $H$ with
respect to a $C^{*}$-base $\cbases{B}{H}$. Then $\alpha$
is a concrete $C^{*}$-module and a full right $C^{*}$-module over $\frakB$
with respect to the inner product $\langle \xi|\xi'\rangle
:=\xi^{*}\xi'$.  We shall frequently identify $\alpha \tr \frakH$ with
$H$ via the unitary
\begin{align} \label{eq:rtp-iso}
  \alpha \tr \frakH \mycong H, \quad \xi
      \tr \zeta \mapsto \xi\zeta.
\end{align}
There exists a nondegenerate and faithful representation
$\rho_{\alpha} \colon \frakB^{\dag} \to {\cal L}(\alpha \tr \frakH)
\cong {\cal L}(H)$ such that for all $b^{\dag} \in
\frakB^{\dag}$, $\xi\in \alpha$, $\zeta \in \frakH$,
\begin{align*}
  \rho_{\alpha}(b^{\dag})(\xi \tr \zeta) = \xi \tr
  b^{\dag}\zeta \quad \text{or, equivalently,}  \quad
  \rho_{\alpha}(b^{\dag})\xi\zeta = \xi b^{\dag}\zeta.
\end{align*}

 Let $K$ be a  Hilbert space. Then each unitary  $U\colon H
 \to K$ induces a map
 \begin{align*}
   U_{*} \colon \cfact(H;\cbases{B}{H}) \to \cfact(K;\cbases{B}{H}),  \quad \alpha
   \mapsto U\alpha,
 \end{align*} 
and $\rho_{(U\alpha)}(b^{\dag})=U\rho_{\alpha}(b^{\dag})U^{*}$ for
all $\alpha \in \cfact(H;\cbasesb)$ and $b^{\dag} \in \frakBo$  because
\begin{align} \label{eq:cfact-conjugate}
  \rho_{(U\alpha)}(b^{\dagger}) U\xi \zeta =  U\xi b^{\dagger} \zeta =
  U \rho_{\alpha}(b^{\dagger})\xi\zeta \quad \text{for all } \xi \in
  \alpha, \, \zeta \in \frakH.
\end{align}
Let $\beta$ be a $C^{*}$-factorization of $K$ with respect to
$\cbases{B}{H}$. We put
\begin{align*}
  {\cal L}(H_{\alpha},K_{\beta})&:= \big\{ T \in {\cal L}(H,K)
  \,\big|\, T\alpha \subseteq \beta, \, T^{*}\beta \subseteq \alpha\big\}.
\end{align*}
Evidently, ${\cal L}(H_{\alpha},K_{\beta})^{*} = {\cal
  L}(K_{\beta},H_{\alpha})$. Let $T \in {\cal
  L}(H_{\alpha},K_{\beta})$. Then the map $T_{\alpha} \colon \alpha
\to \beta$ given by $\xi \mapsto T\xi$ is an adjointable operator of
$C^{*}$-modules, $(T_{\alpha})^{*}=(T^{*})_{\beta}$, and $T
\rho_{\alpha}(b^{\dag}) =\rho_{\beta}(b^{\dag})T $ for all
$b^{\dag} \in \frakBo$.

Let $\alpha$ be a $C^{*}$-factorization of a Hilbert space $H$ with
respect to a $C^{*}$-base $\cbases{B}{H}$ and let $\cbasesc$ be
another $C^{*}$-base. We call a $C^{*}$-factorization $\beta \in
\cfact(H;\cbasesc)$ {\em compatible} with $\alpha$, written $\alpha
\perp \beta$, if $\lnspan \rho_{\alpha}(\frakBo)\beta\rnspan =\beta $
and $\lnspan \rho_{\beta}(\frakC^{\dag})\alpha\rnspan=\alpha$. In that
case,   $\rho_{\alpha}(\frakBo)$ and $\rho_{\beta}(\frakC^{\dag})$ commute.
 We put $\cfact(H_{\alpha};\cbasesc):=\{ \beta \in
  \cfact(H;\cbasesc) \mid \alpha \perp \beta\}$.

\paragraph{The $C^{*}$-relative tensor product}
The $C^{*}$-relative tensor product of Hilbert spaces is a symmetrized
version of the internal tensor product of $C^{*}$-modules and a
$C^{*}$-algebraic analogue of the relative tensor product of Hilbert
spaces over a von Neumann algebra. We briefly summarize the definition
and the main properties; for details, see \cite[Section
2.2]{timmer:cpmu}.

Let $H$ and $K$ be Hilbert spaces, $\cbasesb$ a $C^{*}$-base, and
$\alpha \in \cfact(H;\cbasesb)$ and $\beta \in \cfact(K;\cbaseosb)$.
The {\em $C^{*}$-relative tensor product} of $H$ and $K$ with respect
to $\alpha$ and $\beta$ is the internal tensor product
\begin{align*}
   \HfibreK := \alpha \tr \frakH \tl \beta.
\end{align*}
We  frequently identify this Hilbert space with $\alpha
\tr_{\rho_{\beta}} K$ and $H {_{\rho_{\alpha}}\tl} \beta$ via the
isomorphisms
\begin{gather} \label{eq:rtp-space}
      \alpha \tr_{\rho_{\beta}} K \cong \HfibreK \cong
    H {_{\rho_{\alpha}} \tl} \beta, \quad
    \xi \tr \eta \zeta \equiv \xi \tr \zeta \tl \eta \equiv \xi \zeta
    \tl \eta.
\end{gather}
Using these isomorphisms, we define for each $\xi \in \alpha$ and
$\eta \in \beta$ two pairs of adjoint operators
\begin{align*}
  |\xi\rangle_{\leg{1}} \colon K &\to \HfibreK, \ \zeta \mapsto \xi
  \tr \zeta, & \langle \xi|_{\leg{1}}:=|\xi\rangle_{\leg{1}}^{*}\colon
  \xi' \tr \zeta &\mapsto
  \rho_{\beta}(\langle\xi|\xi'\rangle)\zeta, \\
  |\eta\rangle_{\leg{2}} \colon H &\to \HfibreK, \ \zeta \mapsto \zeta
  \tl \eta, & \langle\eta|_{\leg{2}} := |\eta\rangle_{\leg{2}}^{*}
  \colon \zeta \tl\eta &\mapsto \rho_{\alpha}(\langle
  \eta|\eta'\rangle)\zeta.
\end{align*}
We put $\kalpha{1} := \big\{ |\xi\rangle_{\leg{1}} \,\big|\, \xi \in
\alpha\big\}$ and similarly define $\balpha{1}$, $\kbeta{2}$,
$\bbeta{2}$.

Let $L$, $M$ be Hilbert spaces, $\gamma \in \cfact(L;\cbasesb)$,
$\delta \in \cfact(M;\cbaseosb)$, and $S \in {\cal L}(H,L)$, $T \in
{\cal L}(K,M)$.  Using
the isomorphisms \eqref{eq:rtp-space}, we define an operator $S
\rtensorh T\in {\cal L}\big(\HfibreK, \, L
\rtensor{\gamma}{\frakH}{\delta} M\big)$ in the following cases:
\begin{enumerate}
\item if $S \in {\cal L}(H_{\alpha},L_{\gamma})$ and
  $T\rho_{\beta}(b)=\rho_{\delta}(b)T$ for all $b \in \frakB$, then
  $S \rtensorh T := S_{\alpha} \tr T$;
\item if $T \in {\cal L}(K_{\beta},M_{\delta})$ and
  $S\rho_{\alpha}(b^{\dag}) = \rho_{\gamma}(b^{\dag})S$ for all
  $b^{\dag} \in \frakBo$, then $S \rtensorh T := S \tl T_{\beta}$.
\end{enumerate}
If $S \in {\cal L}(H_{\alpha},L_{\gamma})$ and $T \in {\cal
  L}(K_{\beta},M_{\delta})$, then both times $S \rtensorh T = S_{\alpha} \tr
\Id_{\frakH} \tl T_{\beta}$.  Put
\begin{align*}
  S_{\leg{1}} &:=  S \rtensorh \Id \colon\HfibreK \to L
  \rtensor{\gamma}{\frakH}{\beta} K, &
  T_{\leg{2}} &:= \Id \rtensorh T \colon  \HfibreK \to H
  \rtensor{\alpha}{\frakH}{\delta} M.
\end{align*}
Given $C^{*}$-algebras $C$, $D$ and $*$-homomorphisms $\rho \colon C
\to \rho_{\alpha}(\frakBo)' \subseteq {\cal L}(H)$, $\sigma
\colon D \to \rho_{\beta}(\frakB)' \subseteq {\cal L}(K)$, we 
define $*$-homomorphisms 
$\rho_{\leg{1}} \colon C \to {\cal L}(\HfibreK)$, $c \mapsto
  \rho(c)_{\leg{1}}$, and 
$\sigma_{\leg{2}} \colon D\to {\cal L}(\HfibreK)$, $d \mapsto
\sigma(d)_{\leg{2}}$.

Let $\cbasesc$ and $\cbases{D}{L}$ be $C^{*}$-bases. Then there exist
compatibility-preserving maps
    \begin{align*}
      \cfact(H_{\alpha};\cbasesc) &\to
      \cfact(\HfibreK;\cbasesc), & \gamma &\mapsto \gamma \lt
      \beta:= \lnspan
      \kbeta{2}\gamma\rnspan, \\
      \cfact(K_{\beta};\cbases{D}{L}) &\to
      \cfact(\HfibreK;\cbases{D}{L}), & \delta &\mapsto \alpha \rt
      \delta:= \lnspan \kalpha{1}\delta\rnspan,
    \end{align*}
 and   for all $\gamma \in
    \cfact(H_{\alpha};\cbasesc)$ and $\delta \in
    \cfact(K_{\beta};\cbases{D}{L})$,
    \begin{align*}
   \rho_{(\gamma \lt \beta)}
    &=(\rho_{\gamma})_{\leg{1}}, & \rho_{(\alpha \rt \delta)} &=
    (\rho_{\delta})_{\leg{2}}, & \gamma \lt \beta &\perp \alpha
    \rt \delta.
    \end{align*}

    The $C^{*}$-relative tensor product is symmetric, functorial and
    associative in a natural sense \cite[Section
    2.2]{timmer:cpmu}. Moreover, if the $C^{*}$-base $\cbasesb$ arises
    from a proper KMS-weight $\mu$ on a $C^{*}$-algebra $B$, then
    $\HfibreK$ can be identified with a von Neumann-algebraic relative
    tensor product, also known as Connes' fusion; see \cite[Section
    2.3]{timmer:cpmu}.

\paragraph{The spatial fiber product of $C^{*}$-algebras and concrete
  Hopf $C^{*}$-bimodules}
The spatial fiber product of $C^{*}$-algebras is a $C^{*}$-algebraic
analogue of the fiber product of von Neumann algebras
\cite{sauvageot:2} and of the relative tensor product of
$C_{0}(X)$-algebras \cite{blanchard}. We briefly summarize the
definition and main properties; for details, see \cite[Section
3]{timmer:cpmu}.  Throughout this paragraph, let $\cbasesb$ be a
$C^{*}$-base.

A {\em (nondegenerate) concrete $C^{*}$-$\cbasesb$-algebra}
$(H,A,\alpha)$ consists of a Hilbert space $H$, a (nondegenerate)
$C^{*}$-algebra $A \subseteq {\cal L}(H)$, and a $C^{*}$-factorization
$\alpha \in \cfact(H;\cbasesb)$ such that $\rho_{\alpha}(\frakBo)A
\subseteq A$. If $(H,A,\alpha)$ is a nondegenerate concrete
$C^{*}$-$\cbasesb$-algebra, then $A' \subseteq
\rho_{\alpha}(\frakBo)'$.

Given a Hilbert space $H$ and a $C^{*}$-algebra $A \subseteq {\cal
  L}(H)$, we put
  \begin{align*}
    \cfact(A;\cbasesb):=\{ \beta \in \cfact(H;\cbasesb) \mid (H,A,\beta) \text{
      is a concrete $C^{*}$-$\cbasesb$-algebra}\}.
  \end{align*}
  Let $\alpha \in \cfact(A;\cbasesb)$ and let $\cbasesc$ be a
  $C^{*}$-base.  We put $\cfact(A_{\alpha};\cbasesc):=\{ \beta \in
  \cfact(A;\cbasesc) \mid \beta \perp \alpha\}$.  If $\beta \in
  \cfact(A_{\alpha};\cbasesc)$ (and $A$ is nondegenerate), we call
  $(H,A,\alpha,\beta)$ a {\em (nondegenerate) concrete
    $C^{*}$-$\cbasesb$-$\cbasesc$-algebra}.

  Let $(H,A,\alpha)$ be a concrete $C^{*}$-$\cbasesb$-algebra and
  $(K,B,\beta)$ a concrete $C^{*}$-$\cbaseosb$-algebra. The {\em fiber
    product} of $(H,A,\alpha)$ and $(K,B,\beta)$ is the
  $C^{*}$-algebra
  \begin{align*}
    \AfibreB := \big\{ T \in {\cal L}(\HfibreK) \,\big|\, &T\kalpha{1},
    T^{*}\kalpha{1} \subseteq [ \kalpha{1} B] \\ &\text{and }
    T\kbeta{2}, T^{*}\kbeta{2} \subseteq [\kbeta{2}A]\big\}.
  \end{align*}
 We do not know whether $\AfibreB$ is nondegenerate if
  $A$ and $B$ are nondegenerate. 
  If $\cbasesc$ is a $C^{*}$-base, then $\gamma \lt \beta \in
  \cfact(\AfibreB;\cbasesc)$ for each $ \gamma \in
  \cfact(A_{\alpha};\cbasesc)$ and $\alpha \rt \delta \in
  \cfact(\AfibreB;\cbasesc)$ for each $\delta \in
  \cfact(B_{\beta};\cbasesc)$.

We do not expect the fiber product to be associative.  
Given $C^{*}$-bases $\cbasesb$ and $\cbasesc$, a concrete
 $C^{*}$-$\cbasesb$-algebra $(H,A,\alpha)$,  a concrete
$C^{*}$-$\cbaseosb$-$\cbasesc$-algebra
$(K,B,\beta,\gamma)$ , and a concrete
$C^{*}$-$\cbaseosc$-algebra $(L,C,\delta)$, we put
\begin{align*}
  \AfibreB \fibre{\gamma}{\frakK}{\delta} C := \big((\AfibreB)
  \fibre{\alpha \rt \gamma}{\frakK}{\delta} C\big) \cap \big(A
  \fibre{\alpha}{\frakH}{\beta \lt \delta} (B
  \fibre{\gamma}{\frakK}{\delta} C)\big);
\end{align*}
here, we canonically identify $ (\HfibreK) \rtensor{\alpha \rt
  \gamma}{\frakK}{\delta} L$ with $H \rtensor{\alpha}{\frakH}{\beta
  \lt \delta} (K \rtensor{\gamma}{\frakK}{\delta} L)$.

\begin{definition}
  Let $\cbasesb$ be a $C^{*}$-base and $(H,A,\alpha)$, $(K,B,\beta)$
  concrete $C^{*}$-$\cbasesb$-algebras. A {\em morphism} from
  $(H,A,\alpha)$ to $(K,B,\beta)$ is a $*$-homomorphism $\pi \colon A
  \to B$ such that $\beta = [{\cal
    L}^{\pi}(H_{\alpha},K_{\beta})\alpha]$, where ${\cal
    L}^{\pi}(H_{\alpha}, K_{\beta}) := \big\{ V \in {\cal
    L}(H_{\alpha},K_{\beta}) \,\big|\, \forall a \in A: \pi(a)V = V
  a\big\}$.
\end{definition}
 We denote the set of all morphisms from $(H,A,\alpha)$ to
 $(K,B,\beta)$ by $\Mor\big(A_{\alpha},B_{\beta})$.
In \cite[Definition 3.11]{timmer:cpmu}, we imposed an additional
condition on morphisms which is  automatically satisfied:
\begin{lemma}
  Let $\cbasesb$ be a $C^{*}$-base and let $\pi$ be a morphism of
  concrete $C^{*}$-$\cbasesb$-algebras $(H,A,\alpha)$ and
  $(K,B,\beta)$.  Then $\pi(a\rho_{\alpha}(b^{\dag})) =
  \pi(a)\rho_{\beta}(b^{\dag})$ for all $a \in A$ and $b^{\dag} \in
  \frakBo$.
\end{lemma}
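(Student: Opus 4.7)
The plan is to exploit the defining density property of a morphism, $\beta = [{\cal L}^{\pi}(H_{\alpha},K_{\beta})\alpha]$, together with the automatic intertwining relation $V\rho_{\alpha}(b^{\dag}) = \rho_{\beta}(b^{\dag}) V$ valid for every $V \in {\cal L}(H_{\alpha},K_{\beta})$, which is recorded in the paragraph preceding the Morphism definition. These two inputs collapse the claimed identity to a short calculation followed by a density argument.

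First I would verify that the element $a\rho_{\alpha}(b^{\dag})$ really lies in $A$, so that the left-hand side makes sense.  Since $\rho_{\alpha}(\frakBo) \subseteq {\cal L}(H)$ is a $C^{*}$-algebra and hence $*$-closed, the concrete $C^{*}$-$\cbasesb$-algebra condition $\rho_{\alpha}(\frakBo) A \subseteq A$ yields $A \rho_{\alpha}(\frakBo) \subseteq A$ by taking adjoints, so in particular $a\rho_{\alpha}(b^{\dag}) \in A$.

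The core step is the following one-line computation.  Fix $V \in {\cal L}^{\pi}(H_{\alpha},K_{\beta})$.  Using that $V$ intertwines $\rho_{\alpha}$ and $\rho_{\beta}$, and that $\pi(a')V = Va'$ for every $a' \in A$ (applied to both $a' = a$ and $a' = a\rho_{\alpha}(b^{\dag})$), one obtains
\[
  \pi(a)\rho_{\beta}(b^{\dag}) V = \pi(a) V \rho_{\alpha}(b^{\dag}) = V a\rho_{\alpha}(b^{\dag}) = \pi(a\rho_{\alpha}(b^{\dag})) V.
\]
Hence the bounded operators $\pi(a)\rho_{\beta}(b^{\dag})$ and $\pi(a\rho_{\alpha}(b^{\dag}))$ coincide on ${\cal L}^{\pi}(H_{\alpha},K_{\beta}) H$.

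To conclude, I would chain the $C^{*}$-factorization identities $H = [\alpha\frakH]$ and $K = [\beta\frakH]$ with the morphism assumption $\beta = [{\cal L}^{\pi}(H_{\alpha},K_{\beta})\alpha]$ to get $[{\cal L}^{\pi}(H_{\alpha},K_{\beta}) H] \supseteq [{\cal L}^{\pi}(H_{\alpha},K_{\beta})\alpha\frakH] = [\beta\frakH] = K$; the two bounded operators therefore agree on a dense subspace of $K$ and hence everywhere.  I do not foresee any real obstacle: the work is essentially done by the automatic intertwining property of ${\cal L}(H_{\alpha},K_{\beta})$, and the only mild subtlety is the adjoint trick needed to keep $a\rho_{\alpha}(b^{\dag})$ inside $A$.
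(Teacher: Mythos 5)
Your proof is correct and follows essentially the same route as the paper's: both establish $\pi(a)\rho_{\beta}(b^{\dag})V = \pi(a\rho_{\alpha}(b^{\dag}))V$ for $V \in {\cal L}^{\pi}(H_{\alpha},K_{\beta})$ using the intertwining relation $V\rho_{\alpha}(b^{\dag})=\rho_{\beta}(b^{\dag})V$ together with $\pi(a')V=Va'$, and then conclude by the density $\lnspan {\cal L}^{\pi}(H_{\alpha},K_{\beta})\alpha\frakH\rnspan = \lnspan\beta\frakH\rnspan = K$. Your version is marginally more explicit in two harmless ways: you cite the recorded intertwining fact instead of re-deriving it on vectors $\xi\zeta$, and you spell out the adjoint argument showing $a\rho_{\alpha}(b^{\dag})\in A$, which the paper leaves implicit.
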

\begin{proof}
  Let $a \in A$ and $b^{\dag} \in \frakBo$. Then for all $V \in {\cal
    L}^{\pi}(H_{\alpha},K_{\beta})$, $\xi \in \alpha$, $\zeta \in
  \frakH$,
  \begin{align*}
    \pi(a)\rho_{\beta}(b^{\dag}) V\xi\zeta = \pi(a) V\xi
    b^{\dag}\zeta = Va \rho_{\alpha}(b^{\dag}) \xi \zeta =
    \pi(a\rho_{\alpha}(b^{\dag})) V\xi\zeta.
  \end{align*}
  Since $\lnspan {\cal
    L}^{\pi}(H_{\alpha},K_{\beta})\alpha\frakH\rnspan  = \lnspan
  \beta\frakH\rnspan = K$, the claim follows.
\end{proof}
Let $\pi$ be a morphism of concrete $C^{*}$-$\cbasesb$-algebras
$(H,A,\alpha)$ and $(K,B,\beta)$.  Then $\lnspan \pi(A)K\rnspan = K$
\cite[Remark 3.12]{timmer:cpmu}.

Morphisms extend to multipliers as follows.  Let $(H,A,\alpha)$ be a
nondegenerate concrete  $C^{*}$-$\cbasesb$-algebra. Then clearly also
$(H,M(A),\alpha)$ is a concrete $C^{*}$-$\cbasesb$-algebra.
\begin{definition}
  Let $\cbasesb$ be a $C^{*}$-base and $(H,A,\alpha)$, $(K,B,\beta)$
  nondegenerate concrete $C^{*}$-$\cbasesb$-algebras. We call a
  morphism $\pi$ from $(H,A,\alpha)$ to $(K,M(B),\beta)$ {\em
    nondegenerate} if $\lnspan \pi(A)B\rnspan = B$.
\end{definition}
\begin{lemma}
  Let $\cbasesb$ be a $C^{*}$-base, let $(H,A,\alpha)$, $(K,B,\beta)$
  be nondegenerate concrete $C^{*}$-$\cbasesb$-algebras, and let $\pi$
  be a nondegenerate morphism from $(H,A,\alpha)$ to
  $(K,M(B),\beta)$. Then the unique strictly continuous extension
  $\tilde \pi  \colon M(A) \to 
  M(B)$ of $\pi$ is a morphism from $(H,M(A),\alpha)$ to $(K,M(B),\beta)$.
\end{lemma}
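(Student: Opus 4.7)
The plan is to reduce the required identity $\beta = \lnspan {\cal L}^{\tilde\pi}(H_\alpha, K_\beta)\alpha\rnspan$ to the identity $\beta = \lnspan {\cal L}^{\pi}(H_\alpha, K_\beta)\alpha\rnspan$, which is already guaranteed by the hypothesis that $\pi$ is a morphism. Concretely, I would prove the single inclusion
\begin{equation*}
  {\cal L}^{\pi}(H_\alpha, K_\beta) \subseteq {\cal L}^{\tilde\pi}(H_\alpha, K_\beta).
\end{equation*}
Granted this, ``$\subseteq$'' of the target identity is immediate from the hypothesis on $\pi$, while ``$\supseteq$'' is automatic since every element of ${\cal L}^{\tilde\pi}(H_\alpha, K_\beta) \subseteq {\cal L}(H_\alpha, K_\beta)$ sends $\alpha$ into $\beta$ by definition.

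To prove the inclusion I would fix $V \in {\cal L}^{\pi}(H_\alpha, K_\beta)$ and $m \in M(A)$, and check that $\tilde\pi(m)V = Vm$ as operators $H \to K$. The strictly continuous extension is characterized by $\tilde\pi(m)\pi(a) = \pi(ma)$ for every $a \in A$, because $\tilde\pi$ is a $*$-homomorphism extending $\pi$; and since $A$ acts nondegenerately on $H$, one has $\lnspan AH\rnspan = H$. It therefore suffices to check the identity on vectors $a\eta$ with $a \in A$ and $\eta \in H$, where the intertwining relation $Va = \pi(a)V$ yields
\begin{equation*}
  Vm \cdot a\eta = V(ma)\eta = \pi(ma)V\eta = \tilde\pi(m)\pi(a)V\eta = \tilde\pi(m)V \cdot a\eta,
\end{equation*}
as desired.

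I do not foresee any substantial obstacle: the entire argument consists in combining the defining property of the strict extension with the nondegeneracy of $A \subseteq {\cal L}(H)$. The factorization conditions $V\alpha \subseteq \beta$ and $V^{*}\beta \subseteq \alpha$ built into ${\cal L}(H_\alpha, K_\beta)$ do not need to be re-verified when passing from $\pi$ to $\tilde\pi$, since they are independent of the $*$-homomorphism; only the intertwining condition changes, and what the calculation above shows is precisely that the intertwining against $\pi$ on a dense set automatically upgrades to intertwining against $\tilde\pi$ on all of $M(A)$.
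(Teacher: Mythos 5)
Your proposal is correct and follows essentially the same route as the paper's own proof: both reduce the claim to the single inclusion ${\cal L}^{\pi}(H_{\alpha},K_{\beta}) \subseteq {\cal L}^{\tilde\pi}(H_{\alpha},K_{\beta})$ and verify the intertwining relation $\tilde\pi(m)V = Vm$ on the dense subspace $[AH]=H$ via $\tilde\pi(m)\pi(a)=\pi(ma)$. Your write-up merely spells out a bit more explicitly why this inclusion yields $\beta = [{\cal L}^{\tilde\pi}(H_{\alpha},K_{\beta})\alpha]$, which the paper leaves implicit.
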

\begin{proof}
  We only need to show that ${\cal L}^{
    \pi}(H_{\alpha},K_{\beta})$  is contained in ${\cal L}^{\tilde
    \pi}(H_{\alpha},K_{\beta})$. But if $V \in {\cal
    L}^{\pi}(H_{\alpha},K_{\beta})$ and $T \in M(A)$, then
  $VTa\zeta =\pi(Ta)V\zeta =\tilde \pi(T)\zeta \pi(a)V\zeta=\tilde
  \pi(T)Va\zeta$ for all $a 
  \in A$ and $\zeta \in H$, and since $[AH] = H$, we can
  conclude $VT=\tilde \pi(T)V$.
\end{proof}

\begin{lemma}
  Let $(H,A,\alpha)$ be a nondegenerate concrete
  $C^{*}$-$\cbasesb$-algebra and $(K,B,\beta)$ a nondegenerate
  concrete $C^{*}$-$\cbaseosb$-algebra. Then $M(A)
  \hfibre{\alpha}{\beta}M(B) \subseteq M(A \hfibre{\alpha}{\beta} B)$.
\end{lemma}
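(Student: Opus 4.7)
The plan is to read $M(A \hfibre{\alpha}{\beta} B)$ as the idealizer of $A \hfibre{\alpha}{\beta} B$ inside ${\cal L}(\HfibreK)$ and show directly that every $T \in M(A) \hfibre{\alpha}{\beta} M(B)$ satisfies $TS, ST \in A \hfibre{\alpha}{\beta} B$ for every $S \in A \hfibre{\alpha}{\beta} B$. The whole argument will be a short manipulation of the four defining set-inclusions of the fiber product; the only algebraic input beyond these is that $M(A)A, AM(A) \subseteq A$ and $M(B)B, BM(B) \subseteq B$.

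Fix $T \in M(A) \hfibre{\alpha}{\beta} M(B)$ and $S \in A \hfibre{\alpha}{\beta} B$. Unpacking the definition of the fiber product applied to $T$ and to $S$ gives
\begin{align*}
  T\kalpha{1},\, T^{*}\kalpha{1} &\subseteq [\kalpha{1}M(B)], &
  T\kbeta{2},\, T^{*}\kbeta{2} &\subseteq [\kbeta{2}M(A)], \\
  S\kalpha{1},\, S^{*}\kalpha{1} &\subseteq [\kalpha{1}B], &
  S\kbeta{2},\, S^{*}\kbeta{2} &\subseteq [\kbeta{2}A].
\end{align*}
Using that left multiplication by a bounded operator is continuous, so $T[X] \subseteq [TX]$ for every $X$, the typical computation runs
\begin{align*}
  TS\kalpha{1} &\subseteq T[\kalpha{1}B] \subseteq [\kalpha{1}M(B)B] \subseteq [\kalpha{1}B], \\
  (TS)^{*}\kalpha{1} &= S^{*}T^{*}\kalpha{1} \subseteq S^{*}[\kalpha{1}M(B)] \subseteq [\kalpha{1}BM(B)] \subseteq [\kalpha{1}B],
\end{align*}
and the analogous bounds at $\kbeta{2}$ follow by swapping the roles of $A$ and $B$. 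Repeating the same pattern with the roles of $T$ and $S$ interchanged handles $ST$. Hence $TS, ST \in A \hfibre{\alpha}{\beta} B$, which is precisely what is needed to conclude $T \in M(A \hfibre{\alpha}{\beta} B)$.

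There is no real obstacle; the computation is mechanical and simply iterates the absorption identity $M(A)A = A$, $M(B)B = B$ over the four generating subspaces and their adjoints, on both sides. The one point worth mentioning explicitly is the continuity observation $T[X] \subseteq [TX]$, which is what licenses pushing $T$ and $S$ past the closed linear spans that appear in the defining inclusions of the fiber product. No nondegeneracy of $A \hfibre{\alpha}{\beta} B$ or any deeper structural fact is invoked.
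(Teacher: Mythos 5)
Your proposal is correct and follows essentially the same route as the paper: verify that $TS$ (and $ST$) satisfy the four defining inclusions of the fiber product by pushing $T$ past the closed spans $[\kalpha{1}B]$ and $[\kbeta{2}A]$ and absorbing via $M(B)B\subseteq B$, $M(A)A\subseteq A$. The only difference is presentational — the paper phrases it as $T(A \hfibre{\alpha}{\beta} B)\subseteq A \hfibre{\alpha}{\beta} B$ at the level of sets rather than for a fixed $S$ — and your explicit remark on $T[X]\subseteq[TX]$ is a harmless (correct) elaboration.
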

\begin{proof}
  Let $T \in M(A) \hfibre{\alpha}{\beta}M(B)$. Then $T(A
  \hfibre{\alpha}{\beta} B) \subseteq A \hfibre{\alpha}{\beta} B$ because
  \begin{align*}
    T(A \hfibre{\alpha}{\beta} B) \kalpha{1} \subseteq
    T \lnspan \kalpha{1}B\rnspan \subseteq \lnspan \kalpha{1}
    M(B)B\rnspan \subseteq \lnspan \kalpha{1}B\rnspan
  \end{align*}
  and similarly 
  \begin{align*}
(A \hfibre{\alpha}{\beta} B)T^{*}\kalpha{1} &\subseteq \lnspan
  \kalpha{1}B\rnspan, &
    T(A \hfibre{\alpha}{\beta} B)\kbeta{2} &\subseteq \lnspan
    \kbeta{2}B\rnspan, \\
    (A \hfibre{\alpha}{\beta} B)T^{*}\kbeta{2} &\subseteq \lnspan
    \kbeta{2}B\rnspan. && \qedhere
  \end{align*}
\end{proof}

 Let $\phi$ be a morphism of nondegenerate concrete
 $C^{*}$-$\cbasesb$-algebras $(H,A,\alpha)$ and $(L,\gamma,C)$, and
 let $\psi$ be a morphism of nondegenerate concrete
 $C^{*}$-$\cbaseosb$-algebras $(K,B,\beta)$ and $(M,\delta,D)$.  Then
 there exists a unique $*$-homomorphism $\phi \ast \psi \colon
 \AfibreB \to C \fibre{\gamma}{\frakH}{\delta} D$ such that $(\phi
 \ast \psi)(T) \cdot (X \rtensorh Y) = (X \rtensorh Y) \cdot T$
 whenever $T \in \AfibreB$ and one of the following conditions holds:
 i) $X \in {\cal L}^{\phi}(H,L)$ and $Y \in {\cal
   L}^{\psi}(K_{\beta},D_{\delta})$ or ii) $X \in {\cal
   L}^{\phi}(H_{\alpha},L_{\gamma})$ and $Y \in {\cal L}^{\psi}(K,D)$.
 Moreover, let $\cbasesc$ be a $C^{*}$-base and assume that 
 $\AfibreB \subseteq {\cal L}(\HfibreK)$ is nondegenerate.  If
 $\alpha' \in \cfact(A_{\alpha};\cbasesc)$, $\gamma' \in
 \cfact(C_{\gamma};\cbasesc)$, $\phi \in \Mor(A_{\alpha'},
 C_{\gamma'})$, then $\phi \ast \psi \in \Mor
 \big((\AfibreB)_{(\alpha' \lt \beta)}, (C
 \fibre{\gamma}{\frakH}{\delta} D)_{(\gamma' \lt
   \delta)}\big)$. Similarly, if $\beta' \in
 \cfact(B_{\beta};\cbasesc)$, $\delta' \in
 \cfact(D_{\delta};\cbasesc)$, $\psi \in \Mor(B_{\beta'},
 D_{\delta'})$, then $\phi \ast \psi \in \Mor \big((\AfibreB)_{(\alpha
   \rt \beta')}, (C \fibre{\gamma}{\frakH}{\delta} D)_{(\gamma \rt
   \delta')}\big)$.

\begin{lemma} \label{lemma:ind-functorial}
  Let $H$ be a Hilbert space, $\alpha \in \cfact(H;\cbaseosb)$, and
  let $\rho$ be a morphism of concrete $C^{*}$-$\cbasesb$-algebras
  $(K,C,\gamma)$ and $(L,D,\delta)$. Then $\lnspan \kalpha{2} C
  \balpha{2}\rnspan \subseteq {\cal L}(K \rtensorca H)$ and $\lnspan
  \kalpha{2}D\balpha{2}\rnspan \subseteq {\cal L}(L
  \htensor{\delta}{\alpha} H)$ are $C^{*}$-algebras, and there exists
  a $*$-homomorphism $\Ind_{\alpha}(\rho) \colon \lnspan \kalpha{2} C
  \balpha{2}\rnspan \to \lnspan
  \kalpha{2}D\balpha{2}\rnspan$ such that for all $c \in C$ and
  $\xi,\xi' \in \alpha$,
  \begin{align*}
    \Ind_{\alpha}(\rho)
    \big(|\xi\rangle_{\leg{2}}c\langle\xi'|_{\leg{2}}\big) =
    |\xi\rangle_{\leg{2}}\rho(c)\langle\xi'|_{\leg{2}}.
  \end{align*}
\end{lemma}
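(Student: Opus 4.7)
The plan is to verify the three claims in sequence. For the $C^{*}$-algebra claim, the adjoint relation $(|\xi\rangle_{\leg{2}} c \langle \xi'|_{\leg{2}})^{*} = |\xi'\rangle_{\leg{2}} c^{*} \langle \xi|_{\leg{2}}$ gives $*$-closedness, and using $\langle \xi'|_{\leg{2}} |\xi\rangle_{\leg{2}} = \rho_{\gamma}(\langle \xi'|\xi\rangle)$ one computes
\[
  (|\xi_{1}\rangle_{\leg{2}} c_{1} \langle \xi_{1}'|_{\leg{2}})(|\xi_{2}\rangle_{\leg{2}} c_{2} \langle \xi_{2}'|_{\leg{2}}) = |\xi_{1}\rangle_{\leg{2}} c_{1} \rho_{\gamma}(\langle \xi_{1}'|\xi_{2}\rangle) c_{2} \langle \xi_{2}'|_{\leg{2}}.
\]
Since $\alpha \in \cfact(H;\cbaseosb)$ gives $\langle \xi_{1}'|\xi_{2}\rangle \in \frakBo$, the concrete $C^{*}$-$\cbasesb$-algebra condition $\rho_{\gamma}(\frakBo)C\subseteq C$ (together with its adjoint) places the middle factor back in $C$, so the product stays in $\lspan \kalpha{2} C \balpha{2}$ and its norm-closure is a $C^{*}$-algebra; the argument for $\lnspan \kalpha{2} D \balpha{2}\rnspan$ is identical.

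For the construction of $\Ind_{\alpha}(\rho)$, the key tool is the family of lifts $\tilde V := V \rtensorh \Id_{H} \in {\cal L}(K \rtensorca H, L \htensor{\delta}{\alpha} H)$ for $V \in {\cal L}^{\rho}(K_{\gamma}, L_{\delta})$, which act on simple tensors by $\xi_{1} \tr \zeta \tl \xi_{2} \mapsto V\xi_{1} \tr \zeta \tl \xi_{2}$. For $T = |\xi\rangle_{\leg{2}} c \langle \xi'|_{\leg{2}}$ on $K \rtensorca H$ and its prospective image $S := |\xi\rangle_{\leg{2}} \rho(c) \langle \xi'|_{\leg{2}}$ on $L \htensor{\delta}{\alpha} H$, a direct computation on simple tensors yields the intertwining identity $S \tilde V = \tilde V T$; this uses $Vc = \rho(c)V$ from the definition of ${\cal L}^{\rho}$ and the relation $V \rho_{\gamma}(b^{\dag}) = \rho_{\delta}(b^{\dag}) V$ satisfied by every element of ${\cal L}(K_{\gamma}, L_{\delta})$. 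Define $\Ind_{\alpha}(\rho)$ on $\lspan \kalpha{2} C \balpha{2}$ by the stated formula; if $\sum_{i} T_{i} = 0$ then $\bigl(\sum_{i} S_{i}\bigr) \tilde V = \tilde V \bigl(\sum_{i} T_{i}\bigr) = 0$ for every $V$, and the morphism hypothesis $\delta = \lnspan {\cal L}^{\rho}(K_{\gamma},L_{\delta})\gamma\rnspan$ ensures that $\bigcup_{V} \tilde V(K \rtensorca H)$ spans $\delta \tr \frakH \tl \alpha = L \htensor{\delta}{\alpha} H$ densely, forcing $\sum_{i} S_{i} = 0$.

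Multiplicativity of $\Ind_{\alpha}(\rho)$ then reduces, via the product formula above applied in the target, to the identity $\rho(c \rho_{\gamma}(b^{\dag}) d) = \rho(c)\rho_{\delta}(b^{\dag})\rho(d)$ for $b^{\dag} = \langle \xi_{1}'|\xi_{2}\rangle \in \frakBo$, which follows from the preceding lemma $\rho(c\rho_{\gamma}(b^{\dag})) = \rho(c)\rho_{\delta}(b^{\dag})$. The $*$-preservation is immediate from the defining formula. Thus $\Ind_{\alpha}(\rho)$ is a $*$-homomorphism on the $*$-subalgebra $\lspan \kalpha{2} C \balpha{2}$, and since any $*$-homomorphism into a $C^{*}$-algebra is automatically contractive, it extends uniquely by continuity to a $*$-homomorphism on $\lnspan \kalpha{2} C \balpha{2}\rnspan$. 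I expect the main obstacle to be the well-definedness step, where one must combine the intertwining $S \tilde V = \tilde V T$ with the density $[{\cal L}^{\rho}\gamma] = \delta$ to force $S$ to depend only on $T$; everything else is a routine verification that the algebraic structure survives the $C^{*}$-relative tensor product construction.
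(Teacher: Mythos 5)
Your verification that $\lnspan \kalpha{2}C\balpha{2}\rnspan$ is a $C^{*}$-algebra is exactly the paper's computation, and for the existence of $\Ind_{\alpha}(\rho)$ the paper only cites the first part of the proof of Proposition 3.13 of the reference on $C^{*}$-pseudo-multiplicative unitaries; your construction via the intertwiners $\tilde V = V\rtensorh \Id$ and the density $\lnspan {\cal L}^{\rho}(K_{\gamma},L_{\delta})\gamma\rnspan = \delta$ is the expected one, and the well-definedness, multiplicativity and $*$-preservation steps are all correct.

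There is, however, a genuine gap in the final step. The principle that ``any $*$-homomorphism into a $C^{*}$-algebra is automatically contractive'' holds for $*$-homomorphisms defined on a Banach $*$-algebra, where the spectral-radius argument can invert $1-x$ \emph{inside the domain}; it fails for a merely dense, non-closed $*$-subalgebra of a $C^{*}$-algebra. For instance, evaluation at $2$ on the dense $*$-subalgebra of polynomials in $C[0,1]$ is a $*$-homomorphism onto $\complex$ with $|p(2)|$ unbounded relative to $\sup_{[0,1]}|p|$. So contractivity of $\Ind_{\alpha}(\rho)$ on the algebraic span $\lspan \kalpha{2}C\balpha{2}$ must be proved, not invoked. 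It does follow from your intertwining relation, but via a Rieffel-induction type estimate: for $x$ in the algebraic span, $V_{1},\dots,V_{n}\in {\cal L}^{\rho}(K_{\gamma},L_{\delta})$ and $\eta=\sum_{j}\tilde V_{j}\zeta_{j}$ one has
\[
  \big\|\Ind_{\alpha}(\rho)(x)\,\eta\big\|^{2}
  = \Big\|\sum_{j}\tilde V_{j}x\zeta_{j}\Big\|^{2}
  = \sum_{j,k}\big\langle x\zeta_{j}\big|\,(V_{j}^{*}V_{k})_{\leg{1}}\,x\zeta_{k}\big\rangle ,
\]
and since each $V_{j}^{*}V_{k}$ lies in $C'\cap {\cal L}(K_{\gamma})$, the positive matrix $\big[(V_{j}^{*}V_{k})_{\leg{1}}\big]_{j,k}$ commutes entrywise with $x$; hence the right-hand side is bounded by $\|x\|^{2}\sum_{j,k}\langle \zeta_{j}|(V_{j}^{*}V_{k})_{\leg{1}}\zeta_{k}\rangle = \|x\|^{2}\|\eta\|^{2}$ (commute the diagonal matrix $\mathrm{diag}(x,\dots,x)$ past the square root of the positive matrix). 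With this estimate, the extension by continuity goes through and the rest of your proof stands.
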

\begin{proof}
  The space $\lnspan \kalpha{2} C \balpha{2}\rnspan$ is a
  $C^{*}$-algebra because 
  \begin{align*}
    \kalpha{2} C \balpha{2} (\kalpha{2} C \balpha{2})^{*}\subseteq
    \kalpha{2}C\rho_{\gamma}(\frakB)C\balpha{2} \subseteq
    \kalpha{2}C\balpha{2}.
  \end{align*}
  Likewise, $\lnspan \kalpha{2} D \balpha{2}\rnspan$ is a
  $C^{*}$-algebra. The existence of $\Ind_{\alpha}(\rho)$ follows as
  in the first part of the proof of \cite[Proposition
  3.13]{timmer:cpmu}.
\end{proof}
  A {\em concrete Hopf $C^{*}$-bimodule} is a tuple consisting of a
  $C^{*}$-base $\cbasesb$, a nondegenerate concrete
  $C^{*}$-$\cbasesb$-$\cbaseosb$-algebra $(H,A,\alpha,\beta)$, and a
  $*$-homomorphism $\Delta \colon A \to \AfibreA$ subject to the
  following conditions:
  \begin{enumerate}
  \item $\Delta \in \Mor\big(A_{\alpha}, (\AfibreA)_{\alpha \rt \alpha}\big)
    \cap \Mor\big(A_{\beta}, (\AfibreA)_{\beta \lt \beta}\big)$, and
  \item the following diagram commutes:
    \begin{align*}
      \xymatrix@C=10pt@R=10pt{
        A \ar[rrr]^{\Delta} \ar[dd]^{\Delta} &&& {\AfibreA} \ar[d]^{\Id
          \ast \Delta} \\ 
        &&& {A \fibre{\alpha}{\frakH}{\beta \lt \beta} (\AfibreA)}
        \ar@{^(->}[d] \\
        {\AfibreA} \ar[rr]^(0.35){\Delta\ast \Id} &&
        {(\AfibreA) \fibre{\alpha \rt \alpha}{\frakH}{\beta} A}
        \ar@{^(->}[r] &{\cal L}(H \fibreab H \fibreab H).
      }
    \end{align*}
  \end{enumerate}
  If $(\cbasesb,H,A,\alpha,\beta,\Delta)$ is a concrete Hopf
  $C^{*}$-bimodule, then the spaces $(\Delta \ast \Id)(\Delta(A))$ and $(\Id\ast
  \Delta)(\Delta(A))$ are contained in $A \fibreab A \fibreab A$.

  If $(\cbasesb,H,A,\alpha,\beta,\Delta)$ is a concrete Hopf
  $C^{*}$-bimodule and $\cbasesb$ and $H$ are understood, we shall
   denote this concrete Hopf
  $C^{*}$-bimodule briefly by $\Abimod$.

\paragraph{$C^{*}$-pseudo-multiplicative unitaries and the associated legs}
The notion of a $C^{*}$-pseudo-multiplicative unitary extends the
notion of a multiplicative unitary  \cite{baaj:2}, of a continuous
field of multiplicative unitaries \cite{blanchard}, and of a
pseudo-multiplicative unitary on $C^{*}$-modules
\cite{ouchi,timmermann:hopf}, and  is closely related to
pseudo-multiplicative unitaries on Hilbert spaces \cite{vallin:2}; see
\cite[Section 4.1]{timmer:cpmu}. The precise definition is as follows.

Let $H$ be a Hilbert space, $\cbasesb$ a $C^{*}$-base, $\alpha \in
\cfact(H;\cbasesb)$, $\hbeta \in \cfact(H;\cbaseosb)$, $\beta \in
\cfact(H;\cbaseosb)$ such that $\alpha,\beta,\hbeta$ are pairwise
compatible. Let $V \colon \Hsource \to \Hrange$ be a unitary such that
  \begin{gather} \label{eq:pmu-intertwine}
    \begin{aligned}
      V_{*}(\alpha \lt \alpha) &= \alpha \rt \alpha, &
      V_{*}(\hbeta \rt \beta) &= \hbeta \lt \beta, &
      V_{*}(\hbeta \rt \hbeta) &= \alpha \rt \hbeta, & 
      V_{*}(\beta \lt \alpha) &= \beta \lt \beta.
    \end{aligned}
  \end{gather}
  Then all operators in the following diagram are well-defined
  \cite[Lemma 4.1]{timmer:cpmu},
  \begin{gather} \label{eq:pmu-pentagon}
    \begin{gathered}
      \xymatrix@R=15pt{ {\Hone} \ar[r]^{ V
          \rtensorh \Id} \ar[d]_{\Id \rtensorh V} & {\Htwo}
        \ar[r]^{ \Id \rtensorh V}& { \Hthree,}
        \\
        {\Hfive} \ar[d]_{ \Id\rtensorh \Sigma} & & {\Hfour}
        \ar[u]_{ V \rtensorh \Id}
        \\
        {\Hfourlt} \ar[rr]^{ V \rtensorh \Id}&& {\Hfourrt} \ar[u]_{
          \Sigma_{\leg{23}}} }
    \end{gathered}
  \end{gather}
where $\Sigma_{\leg{23}}$ denotes the isomorphism
\begin{align*}
  \Hfourrt \cong (H {_{\rho_{\alpha}} \tl} \beta) {_{\rho_{\hbeta \lt
        \beta}} \tl} \alpha &\mycong (H {_{\rho_{\hbeta}} \tl} \alpha)
  {_{\rho_{\alpha \lt \alpha}} \tl} \beta \cong \Hfour, \\
  (\zeta \tl \xi) \tl \eta &\mapsto (\zeta \tl \eta) \tl \xi.
\end{align*}
We put $V_{\leg{12}}:=V \rtensorh \Id$, $V_{\leg{23}}:=\Id
\rtensorh V$, $V_{\leg{13}}:=(\Id \rtensorh
\Sigma)V_{\leg{12}}\Sigma_{\leg{23}}$. We call $V$  a
{\em $C^{*}$-pseudo-multiplicative unitary} if Diagram
\eqref{eq:pmu-pentagon} commutes, that is, if
$V_{\leg{12}}V_{\leg{13}}V_{\leg{23}}=V_{\leg{23}}V_{\leg{12}}$. In
that case, also $V^{op} :=\Sigma V^{*} \Sigma \colon H
\htensor{\beta}{\alpha} H \to H \htensor{\alpha}{\hbeta} H$ is a
$C^{*}$-pseudo-multiplica\-tive unitary, called the {\em opposite} of
$V$. 

Let $V \colon \Hsource \to \Hrange$ be a $C^{*}$-pseudo-multiplicative
unitary.  Then the spaces
\begin{align*}
\hA:=  \hA(V) &:= \lnspan \bbeta{2} V \kalpha{2}\rnspan \subseteq {\cal
    L}(H), & A:= A(V) &:= \lnspan\balpha{1} V \khbeta{1}\rnspan \subseteq
  {\cal L}(H)
\end{align*}
 satisfy
  \begin{gather*}
\lnspan \hA \hA\rnspan=    \lnspan \hA \rho_{\hbeta}(\frakB)\rnspan = \lnspan \rho_{\hbeta}(\frakB)
    \hA\rnspan = \lnspan \hA \rho_{\alpha}(\frakBo)\rnspan = \lnspan
    \rho_{\alpha}(\frakBo) \hA\rnspan = \hA \subseteq {\cal
      L}(H_{\beta}),\\ \lnspan A A \rnspan = \lnspan A
    \rho_{\beta}(\frakB)\rnspan = \lnspan \rho_{\beta}(\frakB) A\rnspan =
    \lnspan A \rho_{\alpha}(\frakBo)\rnspan = \lnspan
    \rho_{\alpha}(\frakBo) A\rnspan = A \subseteq {\cal L}(H_{\hbeta}).
  \end{gather*}
  Define maps
\begin{align*}
 \widehat{\Delta}_{V} &\colon \hA \to
  {\cal L}\big(\Hsource\big), \ y \mapsto V^{*}(1 \rtensorh
  y)V,  & \Delta_{V} &\colon A \to {\cal L}\big(\Hrange), \ z
  \mapsto V(z \rtensorh 1)V^{*}.
\end{align*}
We call $V$ {\em well-behaved} if the tuples
$\big(\cbaseosb,H,\hA,\hbeta,\alpha,\hDelta_{V}\big)$ and
$\big(\cbasesb, H, A,\alpha,\beta,\Delta_{V}\big)$ are concrete Hopf
$C^{*}$-bimodules.  We call $V$ {\em regular} if the subspace $\lnspan
\balpha{1} V \kalpha{2} \rnspan \subseteq {\cal L}(H)$ is equal to
$\lnspan \alpha \alpha^{*} \rnspan$. 
If $V$ is regular, then it is well-behaved
\cite[Theorem 4.14]{timmer:cpmu}.

\section{Weak $C^{*}$-pseudo-Kac systems}

Reduced crossed products for coactions of Hopf $C^{*}$-algebras can
conveniently be constructed in the framework of Kac systems
\cite{baaj:2} or, more generally, of weak Kac systems
\cite{vergnioux}. To adapt the construction to coactions of concrete
Hopf $C^{*}$-bimodules, we generalize the notion of a weak Kac system
as follows.

Let $H$ be a Hilbert space. Recall that for each $C^{*}$-base
$\cbasesc$ and each $C^{*}$-factorization $\gamma \in
\cfact(H;\cbasesc)$, $\delta \in \cfact(H;\cbaseosc)$, there exists a
flip map
\begin{align*}
  \Sigma \colon H \rtensor{\gamma}{\frakK}{\delta} H = \gamma \tr
  \frakK \tl \delta \to \delta \tr \frakK \tl \gamma = H
  \rtensor{\delta}{\frakK}{\gamma} H, \quad \eta \tr \zeta \tl \xi
  \mapsto \xi \tr \zeta \tl \eta.
\end{align*}

Let $\cbasesb$ be a $C^{*}$-base, $\alpha,\halpha \in
\cfact(H;\cbasesb)$, $\beta,\hbeta \in \cfact(H;\cbaseosb)$, and let
$U \colon H \to H$ be a symmetry, that is, a self-adjoint
unitary. Assume that $U\alpha=\halpha$ and $U\beta=\hbeta$; then also
$U\halpha=\alpha$ and $U\hbeta=\beta$.  For each $T \in {\cal
  L}(\Hsource, \Hrange)$, put
\begin{align*}
      \widecheck{T} &:=\Sigma (1 \rtensorh U)T(1 \rtensorh U)\Sigma \colon
      \checkHsource \xrightarrow{U_{\leg{2}}\Sigma} \Hsource
      \xrightarrow{T} \Hrange \xrightarrow{\Sigma U_{\leg{2}}}
      \checkHrange, \\ \widehat{T} &:= \Sigma (U \rtensorh
      1)T(U \rtensorh 1)\Sigma \colon \hatHsource
      \xrightarrow{U_{\leg{1}}\Sigma} \Hsource \xrightarrow{T}
      \Hrange \xrightarrow{\Sigma U_{\leg{1}}} \hatHrange.
\end{align*}
Switching between the $C^{*}$-bases $\cbasesb$ and $\cbaseosb$ and
relabeling the $C^{*}$-factorizations $\alpha,\halpha,\beta,\hbeta$
suitably, we can iterate the maps $T \mapsto \widecheck{T}$ and $T
\mapsto \widehat{T}$.  The two relations
$\Sigma(1 \rtensorh U) \Sigma(1\rtensorh U) = U \rtensorh U =
  \Sigma(U \rtensorh 1) \Sigma(U \rtensorh 1)$ and 
  $\Sigma(1 \rtensorh U) \Sigma (U \rtensorh 1) = \Id$ imply
\begin{align} \label{eq:balanced-iterate}
  \widecheck{\widecheck{T}} &= \Ad_{(U \rtensorh U)}(T) =
  \widehat{\widehat{T}}, & \widecheck{\widecheck{\widecheck{T}}} &=
  \widehat{T}, & \widecheck{T} &= \widehat{\widehat{\widehat{T}}} &&
  \text{for all } T \in {\cal L}(\Hsource,\Hrange).
\end{align}
\begin{definition} 
  A {\em balanced $C^{*}$-pseudo-multiplicative unitary}
  $(\alpha,\halpha,\beta,\hbeta,U,V)$ consists of
  \begin{itemize}
  \item $C^{*}$-factorizations $\alpha,\halpha \in \cfact(H;\cbasesb)$
    and $\beta,\hbeta \in \cfact(H;\cbaseosb)$,
  \item a symmetry $U \colon H \to H$, and
  \item a $C^{*}$-pseudo-multiplicative unitary $V \colon \Hsource \to
    \Hrange$
  \end{itemize}
 satisfying the following conditions:
  \begin{enumerate}
  \item $\alpha,\halpha,\beta,\hbeta$ are pairwise compatible,
  \item $U\alpha=\halpha$ and $U\beta=\hbeta$,
  \item $\checkV$ and $\hatV$  are $C^{*}$-pseudo-multiplicative
    unitaries.  
  \end{enumerate}
\end{definition}
\begin{remarks}\label{remarks:balanced}
  \begin{enumerate}
  \item Since $\hatV = (U \rtensorh U)\checkV (U
    \rtensorh U)$, the unitary $\hatV$ is $C^{*}$-pseudo-multiplicative
    if and only if $\checkV$ is $C^{*}$-pseudo-multiplicative.
  \item If $(\alpha,\halpha,\beta,\hbeta,U,V)$ is a balanced
    $C^{*}$-pseudo-multiplicative unitary, then also the tuples
    $(\hbeta,\beta,\alpha,\halpha,U,\checkV)$,
    $(\halpha,\alpha,\hbeta,\beta,U, \Ad_{(U \rtensorh U)}(V))$, and
    $(\beta,\hbeta,\halpha,\alpha,U,\hatV)$ are balanced
    $C^{*}$-pseudo-multiplicative unitaries. This follows easily from
    \eqref{eq:balanced-iterate}. Moreover, in that case
    $(\alpha,\halpha,\hbeta,\beta,U,V^{op})$ is a balanced
    $C^{*}$-pseudo-multiplicative unitary as well, as can be seen from
    the relation
    \begin{gather} \label{eq:balanced-op}
      \widecheck{(V^{op})} = \Sigma U_{\leg{2}} (\Sigma V^{*} \Sigma)
      U_{\leg{2}} \Sigma = \Sigma (\Sigma U_{\leg{1}} V U_{\leg{1}}
      \Sigma)^{*} \Sigma = (\hatV)^{op}.
   \end{gather}
 \item If $(\alpha,\halpha,\beta,\hbeta,U,V)$ is a balanced
   $C^{*}$-pseudo-multiplicative unitary, the relations
   \eqref{eq:pmu-intertwine} for the unitaries $\checkV \colon
   \checkHsource \to \checkHrange$ and $\hatV \colon \hatHsource \to
   \hatHrange$ read as follows:
   \begin{align*}
      \checkV_{*}(\hbeta \lt \hbeta) &= \hbeta \rt \hbeta, &
      \checkV_{*}(\halpha \rt \alpha) &= \halpha \lt \alpha, &
      \checkV_{*}(\halpha \rt \halpha) &= \hbeta \rt \halpha, & 
      \checkV_{*}(\alpha \lt \hbeta) &= \alpha \lt \alpha, \\
      \hatV_{*}(\beta \lt \beta) &= \beta \rt \beta, &
      \hatV_{*}(\alpha \rt \halpha) &= \alpha \lt \halpha, &
      \hatV_{*}(\alpha \rt \alpha) &= \beta \rt \alpha, & 
      \hatV_{*}(\halpha \lt \beta) &= \halpha \lt \halpha.
   \end{align*}
   These relations furthermore imply
   \begin{align*}
     V_{*}(\hbeta \rt \halpha) &= \alpha \rt \halpha, & 
     \checkV_{*}(\halpha \rt \beta) &= \hbeta \rt \beta, &
     \hatV_{*}(\alpha \rt \hbeta) &= \beta \rt \hbeta, \\
     V_{*}(\halpha \lt \alpha) &= \halpha \lt \beta, &
     \checkV_{*}(\beta \lt \hbeta) &= \beta \lt \alpha, &
     \hatV_{*}(\hbeta \lt \beta) &= \hbeta \lt \halpha.
   \end{align*}
 \item If $(\alpha,\halpha,\beta,\hbeta,U,V)$ is a balanced
   $C^{*}$-pseudo-multiplicative unitary, then $\hA(V),A(V) \subseteq {\cal
     L}(H_{\halpha})$ because
   \begin{gather} \label{eq:ha-halpha}
     \lnspan \hA(V) \halpha \rnspan = \lnspan
     \bbeta{2}V\kalpha{2}\halpha\rnspan = \lnspan \bbeta{2} \kbeta{2}
     \halpha\rnspan = \lnspan \rho_{\alpha}(\frakBo)\halpha\rnspan =
     \halpha
   \end{gather}
   and similarly $\lnspan A(V)\halpha\rnspan= \lnspan
   \balpha{1}V\khbeta{1}\halpha\rnspan = \lnspan
   \balpha{1}\kalpha{1}\halpha\rnspan = \lnspan
   \rho_{\beta}(\frakB)\halpha\rnspan = \halpha$.
  \end{enumerate}
\end{remarks}
The auxiliary unitaries $\checkV$ and $\hatV$ defined above allow us
to treat the right and the left leg of $V$, respectively, as the left
or the right leg of some $C^{*}$-pseudo-multiplicative unitary:
\begin{proposition} \label{proposition:balanced-legs}
  Let $(\alpha,\halpha,\beta,\hbeta,U,V)$ be a  balanced
  $C^{*}$-pseudo-multiplicative unitary.  Then
  \begin{gather*}
    \hA(\checkV) =  \Ad_{U}(A(V)), \quad
    \hDelta_{\checkV} = \Ad_{(U \rtensorh U)} \circ \Delta_{V} \circ
    \Ad_{U}, \qquad
    A(\checkV) = \hA(V), \quad
    \Delta_{\checkV} = \hDelta_{V}, \\
    A(\hatV) = \Ad_{U}(\hA(V)), \quad
    \Delta_{\hatV} = \Ad_{(U \rtensorh U)} \circ \hDelta_{V} \circ
    \Ad_{U}, \qquad
    \hA(\hatV) = A(V), \quad
    \hDelta_{\hatV} = \Delta_{V}.
  \end{gather*}
  In particular, $\checkV$ and $\hatV$ are well-behaved if $V$ is
  well-behaved.  
\end{proposition}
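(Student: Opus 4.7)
The plan is to prove the eight identities by direct computation, using a cleaner alternative formula for $\checkV$ together with a careful tracking of how $\Sigma$ and $(1 \rtensorh U) = U_{\leg{2}}$ interact with the $|\xi\rangle_{\leg{k}}$ and the slot operators $\cdot \rtensorh 1$, $1 \rtensorh \cdot$. The first step is to rewrite $\checkV$: pushing $\Sigma$ past $(1 \rtensorh U)$ via the naturality identities $\Sigma(1 \rtensorh U) = (U \rtensorh 1)\Sigma$ and $(1 \rtensorh U)\Sigma = \Sigma(U \rtensorh 1)$, and using $\Sigma V^{*} \Sigma = V^{op}$, one obtains $\checkV = (U \rtensorh 1)(V^{op})^{*}(U \rtensorh 1)$ and symmetrically $\hatV = (1 \rtensorh U) V^{op}(1 \rtensorh U)$ (after an analogous manipulation). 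These forms keep one $U_{\leg{k}}$ on each side of a single "$V$-type" operator, which makes the later computations bookkeeping-light.

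For the algebra identities one computes matrix coefficients. The key elementary facts are $\Sigma|\xi\rangle_{\leg{2}} = |\xi\rangle_{\leg{1}}$ (target spaces flipped) and its adjoint, together with $U_{\leg{k}}|\xi\rangle_{\leg{k}} = |U\xi\rangle_{\leg{k}}$, whereas $U_{\leg{k}}|\xi\rangle_{\leg{j}} = |\xi\rangle_{\leg{j}} U$ when $j \neq k$. Applying these to $\langle\eta|_{\leg{2}} \checkV |\xi\rangle_{\leg{2}}$ for $\eta \in \alpha$, $\xi \in \hbeta$ yields $U\langle\eta|_{\leg{1}} V |\xi\rangle_{\leg{1}} U$, hence $\hA(\checkV) = \Ad_{U}(A(V))$. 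Applying them to $\langle\eta|_{\leg{1}} \checkV |\xi\rangle_{\leg{1}}$ for $\eta \in \hbeta$, $\xi \in \halpha$ yields $\langle U\eta|_{\leg{2}} V |U\xi\rangle_{\leg{2}}$, and since $U\hbeta = \beta$, $U\halpha = \alpha$, this gives $A(\checkV) = \hA(V)$. The statements for $\hatV$ follow either by the same type of calculation or, more economically, from the relation $\hatV = \Ad_{(U \rtensorh U)}(\checkV)$ noted in Remark 3.2(i), which just shuffles the conclusions through an additional $\Ad_{U}$.

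For the coproduct identities, I would start from $\hDelta_{\checkV}(y) = \checkV^{*}(1 \rtensorh y)\checkV$ for $y \in \hA(\checkV) = \Ad_{U}(A(V))$. Writing $y = UzU$ with $z \in A(V)$ and substituting the factorization of $\checkV$ above, the crucial simplification uses $\Sigma(a \rtensorh 1)\Sigma = 1 \rtensorh a$ (and vice versa) to move the leg swap past the inner $(1 \rtensorh y)$, together with $(U \rtensorh 1)(z \rtensorh 1)(U \rtensorh 1) = UzU \rtensorh 1$ and the commutation of $(1 \rtensorh U)$ with anything of shape $(\cdot \rtensorh 1)$. These manipulations reshape $\checkV^{*}(1 \rtensorh UzU)\checkV$ into $(U \rtensorh U)V(z \rtensorh 1)V^{*}(U \rtensorh U) = \Ad_{(U \rtensorh U)}(\Delta_{V}(z))$, giving $\hDelta_{\checkV} = \Ad_{(U \rtensorh U)} \circ \Delta_{V} \circ \Ad_{U}$. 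An entirely parallel computation, but starting from $\Delta_{\checkV}(z) = \checkV(z \rtensorh 1)\checkV^{*}$ for $z \in A(\checkV) = \hA(V)$, produces $V^{*}(1 \rtensorh z)V = \hDelta_{V}(z)$; here the two $U_{\leg{2}}$'s and the two $\Sigma$'s cancel, leaving no residual conjugation because the leg of $z$ matches the leg on which the inner $V, V^{*}$ act nontrivially. The corresponding identities for $\hatV$ follow by repeating the argument with $\hatV = (1 \rtensorh U) V^{op}(1 \rtensorh U)$, or by invoking the symmetry $\hatV = \Ad_{(U \rtensorh U)}(\checkV)$ from Remark 3.2(ii). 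Finally, once all eight identities are established, the statement that $\checkV$ and $\hatV$ are well-behaved whenever $V$ is reduces to the observation that $\Ad_{U}$ and $\Ad_{(U \rtensorh U)}$ are $*$-isomorphisms compatible with the $C^{*}$-factorizations (by the intertwining relations $U\alpha = \halpha$, $U\beta = \hbeta$), so they transport the concrete Hopf $C^{*}$-bimodule structure of $\Abimod$ and $\hAbimod$ to analogous structures for $\checkV$ and $\hatV$.

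The main obstacle is the sheer bookkeeping: every $|\xi\rangle_{\leg{k}}$, every $\Sigma$, every $(1 \rtensorh U)$, and every $(\cdot \rtensorh 1)$ lives on a specific relative tensor product with a specific pair of $C^{*}$-factorizations in the two legs, and the identities move between these. The conceptual content is light, but one has to verify at each step that the required intertwining condition (e.g.\ $y \in \mathcal{L}(H_{\alpha})$ so that $y \rtensorh 1$ is defined on the relevant space) holds; for the coproducts this uses the fact from Remark 3.2(iv) that $\hA(V), A(V) \subseteq \mathcal{L}(H_{\halpha})$, which together with $\mathcal{L}(H_{\hbeta})$, resp.\ $\mathcal{L}(H_{\beta})$, ensures $\Ad_{U}$ sends things into the right $\mathcal{L}(H_{\alpha}) \cap \mathcal{L}(H_{\beta})$ to allow $\rtensorh$ operations on the spaces in play.
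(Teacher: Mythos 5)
Your treatment of the algebra identities is correct and coincides with the paper's: insert $\checkV=\Sigma U_{\leg{2}}VU_{\leg{2}}\Sigma$ into $\lnspan\bhbeta{1}\checkV\khalpha{1}\rnspan$, push the $\Sigma$'s and $U$'s onto the bra/ket operators, and use $U\halpha=\alpha$, $U\hbeta=\beta$. The gap is in the comultiplication identities. You claim that $\Delta_{\checkV}(z)=\checkV(z\rtensorh 1)\checkV^{*}$ collapses to $V^{*}(1\rtensorh z)V$ because ``the two $U_{\leg{2}}$'s and the two $\Sigma$'s cancel.'' Carrying out the substitution actually yields
$\checkV(z\rtensorh 1)\checkV^{*}=\Sigma U_{\leg{2}}\,V(1\rtensorh UzU)V^{*}\,U_{\leg{2}}\Sigma$,
which has the shape $V(\cdot)V^{*}$ in the middle, whereas the target $\hDelta_{V}(z)=V^{*}(1\rtensorh z)V$ has the shape $V^{*}(\cdot)V$. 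No amount of commuting $\Sigma$ and $U$ past slot operators can interchange $V$ and $V^{*}$: the two expressions are conjugations by genuinely different unitaries on different relative tensor products, and their equality is not a formal identity. (If it were, condition (iii) in the definition of a balanced $C^{*}$-pseudo-multiplicative unitary --- that $\checkV$ itself satisfies the pentagon equation --- would play no role in the proposition, yet it is exactly what is needed here.) The same defect affects your derivation of $\hDelta_{\checkV}=\Ad_{(U\rtensorh U)}\circ\Delta_{V}\circ\Ad_{U}$, which again tries to turn a $\checkV^{*}(\cdot)\checkV$ into a $V(\cdot)V^{*}$ by bookkeeping alone.

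The missing ingredient is what the paper isolates in Lemma \ref{lemma:balanced}: the pentagon equation for $\checkV$ gives $\checkV_{\leg{12}}V_{\leg{13}}\checkV_{\leg{12}}^{*}=V_{\leg{13}}V_{\leg{23}}$, and the pentagon equation for $V$ rewrites the right-hand side as $V_{\leg{12}}^{*}V_{\leg{23}}V_{\leg{12}}$; compressing with $\langle\xi'|_{\leg{3}}\cdot|\xi\rangle_{\leg{3}}$ turns the left side into $\Delta_{\checkV}(\ha)$ and the right side into $\hDelta_{V}(\ha)$ for $\ha=\langle\xi'|_{\leg{2}}V|\xi\rangle_{\leg{2}}$. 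You need this (or an equivalent double use of the pentagon equations) to obtain $\Delta_{\checkV}=\hDelta_{V}$ and $\hDelta_{\hatV}=\Delta_{V}$; the remaining four identities then do follow by the purely formal conjugation $\hatV=\Ad_{(U\rtensorh U)}(\checkV)$, as you indicate, and your final paragraph on well-behavedness is fine once the identities are in place. A minor slip worth fixing: since $\Sigma V\Sigma=(V^{op})^{*}$, your symmetric factorization should read $\hatV=U_{\leg{2}}(V^{op})^{*}U_{\leg{2}}$ rather than $U_{\leg{2}}V^{op}U_{\leg{2}}$.
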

For the proof, we need the following lemma:
\begin{lemma} \label{lemma:balanced}
  The following diagrams commute:
  \begin{gather} \label{eq:balanced-rel1} \smalldiagram
    \xymatrix@C=35pt@R=15pt{
      {(\checkHsource) \htensor{\hbeta \lt \hbeta}{\alpha} H}
      \ar[r]^{V_{\leg{13}}} \ar[d]_{\checkV_{\leg{12}}} &
      {(\checkHsource) \htensor{\alpha \lt \hbeta}{\beta} H}
      \ar[r]^{\checkV_{\leg{12}}} &
      {(\Hsource) \htensor{\alpha \lt \alpha}{\beta} H} \\
      {\Hone} \ar[rr]^{V_{\leg{23}}}
      && {H \htensor{\hbeta}{\alpha\rt \alpha} (\Hrange)} \ar[u]_{V_{\leg{13}}}
    }
    \shortintertext{and} \label{eq:balanced-rel2}
    \xymatrix@C=35pt@R=15pt{
      {H \htensor{\hbeta}{\alpha\rt \alpha} (\hatHsource)}
      \ar[r]^{\hatV_{\leg{23}}} \ar[d]_{V_{\leg{13}}} &
      {H \htensor{\hbeta}{\beta \rt \alpha} (\hatHrange)}
      \ar[r]^{V_{\leg{13}}} &
      {H \htensor{\alpha}{\beta \rt \beta} (\hatHrange).} \\
      {(\Hsource) \htensor{\alpha \lt \alpha}{\beta} H}
      \ar[rr]^{V_{\leg{12}}} &&
      {\Hthree} \ar[u]_{\hatV_{\leg{23}}}
    }
  \end{gather}
\end{lemma}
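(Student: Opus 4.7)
Both diagrams assert a commutativity that, after unfolding the definitions of $\checkV$ and $\hatV$ as twists of $V$ by the symmetry $U$ and the flips $\Sigma$, should reduce to the pentagon equation $V_{\leg{12}} V_{\leg{13}} V_{\leg{23}} = V_{\leg{23}} V_{\leg{12}}$ for $V$. My plan is to verify each diagram by substitution and simplification, rather than by slicing against vectors or invoking the pentagons for $\checkV$ and $\hatV$ directly.

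For diagram \eqref{eq:balanced-rel1}, I would first write $\checkV_{\leg{12}}$ as a composition built from $V_{\leg{12}}$, the flip $\Sigma_{\leg{12}}$ on the first two legs, and $U$ acting on one of those legs, using $\checkV = \Sigma(1 \rtensorh U) V (1 \rtensorh U)\Sigma$. Substituting into the target identity $\checkV_{\leg{12}} V_{\leg{13}} = V_{\leg{13}} V_{\leg{23}} \checkV_{\leg{12}}$, I would then commute each $\Sigma_{\leg{12}}$ past $V_{\leg{13}}$ (which swaps it with $V_{\leg{23}}$ up to relabeling of factorizations), and absorb the $U$'s by using $U\alpha=\halpha$, $U\beta=\hbeta$ together with the intertwining relations \eqref{eq:pmu-intertwine}. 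After cancellation, the equality should reduce to the pentagon for $V$.

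For diagram \eqref{eq:balanced-rel2} the argument is symmetric, with $\hatV_{\leg{23}}$ in place of $\checkV_{\leg{12}}$: substitute $\hatV = \Sigma(U \rtensorh 1) V (U \rtensorh 1)\Sigma$, commute $\Sigma_{\leg{23}}$ past $V_{\leg{13}}$ (converting it into $V_{\leg{12}}$), and apply the pentagon. The main obstacle in both computations is the bookkeeping of $C^{*}$-factorizations: each $\Sigma$ and each $U$ alters the factorizations on the legs, and one must verify that every intermediate operator genuinely lives between tensor products with the claimed factorizations. The table of intertwining relations collected in Remark~\ref{remarks:balanced}(iii) for $V$, $\checkV$, and $\hatV$ is precisely what underwrites these checks, so the verification is ultimately routine, albeit long.
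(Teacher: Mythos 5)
There is a genuine gap here: the commutativity of \eqref{eq:balanced-rel1} does \emph{not} reduce to the pentagon equation for $V$; it is in fact \emph{equivalent} to the pentagon equation for $\checkV$, which is a separate hypothesis (condition iii) in the definition of a balanced $C^{*}$-pseudo-multiplicative unitary) and which your plan explicitly declines to invoke. To see where the computation breaks down, write $\checkV_{\leg{12}} = \Sigma_{\leg{12}} U_{\leg{2}} V_{\leg{12}} U_{\leg{2}} \Sigma_{\leg{12}}$ and substitute into the target identity $\checkV_{\leg{12}} V_{\leg{13}} = V_{\leg{13}} V_{\leg{23}} \checkV_{\leg{12}}$. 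The flips $\Sigma_{\leg{12}}$ can indeed be pushed past $V_{\leg{13}}$ and $V_{\leg{23}}$ (exchanging them), but the symmetries cannot be absorbed: $U_{\leg{2}}$ commutes with $V_{\leg{13}}$ but not with $V_{\leg{12}}$ or $V_{\leg{23}}$, and the conjugate $U_{\leg{2}} V U_{\leg{2}}$ is not $V$ --- up to a flip it \emph{is} $\checkV$. The relations $U\alpha = \halpha$, $U\beta = \hbeta$ and \eqref{eq:pmu-intertwine} only control where the $C^{*}$-factorizations are sent; they provide no operator identity that would let you cancel the $U$'s. After all legitimate simplifications you are left with a mixed relation involving both $V$ and $U_{\leg{2}}VU_{\leg{2}}$, which is a relabelled form of the pentagon for $\checkV$, not of the pentagon for $V$.

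The paper's proof runs in the opposite direction: it \emph{starts} from the pentagon $\checkV_{\leg{12}}\checkV_{\leg{13}}\checkV_{\leg{23}} = \checkV_{\leg{23}}\checkV_{\leg{12}}$ (valid because $\checkV$ is assumed to be $C^{*}$-pseudo-multiplicative), inserts $\checkV = U_{\leg{1}}WU_{\leg{1}}$ with $W = \Sigma V\Sigma$ into the legs $\leg{12}$ and $\leg{13}$, cancels the $U_{\leg{1}}$'s (which commute with $\checkV_{\leg{23}}$) to get $W_{\leg{12}}W_{\leg{13}}\checkV_{\leg{23}} = \checkV_{\leg{23}}W_{\leg{12}}$, and then renumbers the legs by a cyclic permutation to arrive at $\checkV_{\leg{12}}V_{\leg{13}} = V_{\leg{13}}V_{\leg{23}}\checkV_{\leg{12}}$; diagram \eqref{eq:balanced-rel2} is handled symmetrically from the pentagon for $\hatV$. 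Since every step there is reversible, \eqref{eq:balanced-rel1} really is equivalent to the pentagon for $\checkV$, so no argument using only the pentagon for $V$ can succeed --- if one could, condition iii) of the definition would be redundant, and it is not (already in the Baaj--Skandalis setting the multiplicativity of the twisted unitary is an independent axiom of a Kac system). Your bookkeeping of the $C^{*}$-factorizations via Remark \ref{remarks:balanced} iii) is the right supporting material, but the missing ingredient is the pentagon for $\checkV$ (resp.\ $\hatV$) as the actual source of the identity.
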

\begin{proof}
  Let us prove that diagram \eqref{eq:balanced-rel1} commutes. Put
  $W:= \Sigma V \Sigma$. We insert the relation $\checkV = U_{\leg{1}}
  W U_{\leg{1}}$ into the pentagon equation $\checkV_{\leg{12}}
  \checkV_{\leg{13}} \checkV_{\leg{23}} = \checkV_{\leg{23}}
  \checkV_{\leg{12}}$ and obtain
  \begin{align*}
    U_{\leg{1}} W_{\leg{12}} U_{1} \cdot U_{1} W_{\leg{13}} U_{1}
    \cdot \checkV_{\leg{23}} = \checkV_{\leg{23}} \cdot U_{\leg{1}}
    W_{\leg{12}} U_{\leg{1}}. 
  \end{align*}
  Since $U_{\leg{1}}$ commutes with $\checkV_{\leg{23}}$, we can
  cancel $U_{\leg{1}}$ everywhere in the equation above and find
  $W_{\leg{12}} W_{\leg{13}} \checkV_{\leg{23}} = \checkV_{\leg{23}}
  W_{\leg{12}}$. Now, we conjugate both sides of this equation by the
  automorphism $\Sigma_{\leg{23}}\Sigma_{\leg{12}}$, which amounts to
  renumbering the legs of the operators according to the permutation
  $(1,2,3) \mapsto (2,3,1)$, and find $V_{\leg{13}}V_{\leg{23}}
  \checkV_{\leg{12}} = \checkV_{\leg{12}} V_{\leg{13}}$. If we retrace
  the derivation of this equation in diagrammatic form, we obtain  
  diagram \eqref{eq:balanced-rel1}.

  A similar argument shows that diagram \eqref{eq:balanced-rel2} commutes.
\end{proof}

\begin{proof}[Proof of Proposition \ref{proposition:balanced-legs}]
  Inserting the relations
  \begin{align*}
  \checkV=\Sigma
  U_{\leg{2}}VU_{\leg{2}}\Sigma \colon \checkHsource \to \checkHrange,
  &&
  \hatV=\Sigma U_{\leg{1}} V
  U_{\leg{1}} \Sigma \colon \hatHsource \to \hatHrange
  \end{align*}
  into the definition of $A(\checkV)$ and $\hA(\hatV)$, respectively, we find
  \begin{align*}
    A(\checkV) &= 
    \lnspan \bhbeta{1} \Sigma U_{\leg{2}} V U_{\leg{2}} \Sigma
    \khalpha{1} \rnspan = \lnspan \langle U \hbeta|_{\leg{2}} V |
    U\halpha\rangle_{\leg{2}} \rnspan = \lnspan \bbeta{2} V
    \kalpha{2}\rnspan = \hA(V), \\
    \hA(\hatV) &= \lnspan \bhalpha{2} \Sigma U_{\leg{1}} V U_{\leg{1}}
    \Sigma \kbeta{2}\rnspan = \lnspan \langle U\halpha|_{\leg{1}} V |
    U \beta\rangle_{\leg{1}}\rnspan = \lnspan \balpha{1} V \khbeta{1}
    \rnspan = A(V).
  \end{align*}

  To prove $\Delta_{\checkV} = \hDelta_{V}$, consider an element
  $\ha=\langle \xi'|_{\leg{2}}V|\xi\rangle_{\leg{2}} \in \hA(V)$,
   where $\xi' \in \beta$, $\xi \in \alpha$.  By definition,
   $\Delta_{\checkV}(\ha) = \checkV(\ha \rtensorh 1)\checkV^{*}$. The
 commutative diagram 
 \begin{gather*}\smalldiagram \hspace{-3pt}
    \xymatrix@R=20pt@C=18pt{
      {\checkHrange} \ar[r]^{\checkV^{*}}
      \ar[d]^{|\xi\rangle_{\leg{3}}} & {\checkHsource}
      \ar[r]^{\ha \rtensorh 1} \ar[d]^{|\xi\rangle_{\leg{3}}} &
      {\checkHsource} \ar[r]^{\checkV} &
      {\checkHrange,}  \\
      {\Hone}
      \ar[r]^(0.47){\checkV^{*}_{\leg{12}}}
          & {(\checkHsource)
        \htensor{\hbeta \lt \hbeta}{\alpha} H} \ar[r]^{V_{\leg{13}}}
      & {(\checkHsource) \htensor{\alpha \lt \hbeta}{\beta} H}
      \ar[r]^{\checkV_{\leg{12}}} \ar[u]^{\langle\xi'|_{\leg{3}}} &
      {(\Hsource) \htensor{\alpha \lt \alpha}{\beta} H}
      \ar[u]^{\langle\xi'|_{\leg{3}}}
    }
  \end{gather*}
 diagram  \eqref{eq:balanced-rel1} and the pentagon diagram
  \eqref{eq:pmu-pentagon} imply
  \begin{align*}
     \checkV(\ha \rtensorh 1)\checkV^{*} &= \langle
    \xi'|_{\leg{3}} \checkV_{\leg{12}}V_{\leg{13}}
    \checkV_{\leg{12}}^{*} |\xi\rangle_{\leg{3}} \\ &= \langle
    \xi'|_{\leg{3}} V_{\leg{13}} V_{\leg{23}}|\xi\rangle_{\leg{3}} =
    \langle \xi'|_{\leg{3}} V_{\leg{12}}^{*} V_{\leg{23}}V_{\leg{12}}
    |\xi\rangle_{\leg{3}}.
  \end{align*}
  The following diagram shows that the expression above is equal to
  $V^{*}(1 \rtensorh \ha)V=\hDelta_{V}(\ha)$:
  \begin{gather*}\smalldiagram \hspace{-3pt}
    \xymatrix@R=20pt@C=18pt{
      {\Hsource} \ar[r]^{V} \ar[d]^{|\xi\rangle_{\leg{3}}} &
      {\Hrange} \ar[r]^{1 \rtensorh \ha} \ar[d]^{|\xi\rangle_{\leg{3}}} &
      {\Hrange} \ar[r]^{V^{*}} &
      {\Hsource.}\\
      {\Hone} \ar[r]^{V_{\leg{12}}}
      & {\Htwo} \ar[r]^{V_{\leg{23}}}
      & {\Hthree} \ar[r]^(0.47){V_{\leg{12}}^{*}} \ar[u]^{\langle\xi'|_{\leg{3}}}&
      {\Hfour} \ar[u]^{\langle\xi'|_{\leg{3}}}
    }
  \end{gather*}
  Since elements of the form like $\ha$  are dense in $\hA(V)$, we can
  conclude $\Delta_{\checkV}=\hDelta_{\checkV}$.

  A similar argument shows that $\hDelta_{\hatV}=\Delta_{V}$; here, we
  have to use diagram \eqref{eq:balanced-rel2}.

  The remaining equations in Proposition
  \ref{proposition:balanced-legs} follow  from the relation
  $\hatV = \Ad_{(U \rtensorh U)}(\checkV)$ and those equations that we
  have proved already.
\end{proof}
The definition of a weak  $C^{*}$-pseudo-Kac system involves the
following conditions:
\begin{lemma} \label{lemma:weak-kac}
 Let $(\alpha,\halpha,\beta,\hbeta,U,V)$ be a balanced
  $C^{*}$-pseudo-multiplicative unitary.
  \begin{enumerate}
  \item The following conditions are equivalent:
    \begin{enumerate}
    \item The following diagram commutes: \raisebox{-3pt}{
        $\smalldiagram\xymatrix@R=20pt{ {\hatHsource
            \htensor{\hbeta}{\alpha} H} \ar[r]^{\hatV_{\leg{12}}}
          \ar[d]^{V_{\leg{23}}} &
          {\hatHrange \htensor{\hbeta}{\alpha} H} \ar[d]^{V_{\leg{23}}} \\
          {\hatHsource \htensor{\alpha}{\beta} H}
          \ar[r]^{\hatV_{\leg{12}}} & {\hatHrange
            \htensor{\alpha}{\beta} H.} }$}
    \item  $(1 \rtensorh \ha)\hatV=\hatV(1 \rtensorh \ha)$ in ${\cal
        L}(\hatHsource, \hatHrange)$ for each $\ha \in \hA(V)$.
    \item $(\Ad_{U}(\ha) \rtensorh 1)V = V(\Ad_{U}(\ha) \rtensorh 1)$
      in ${\cal L}(\Hsource, \Hrange)$ for each $\ha \in
      \hA(V)$.
   \item $\hA(V)$ and $\Ad_{U}(\hA(V))$ commute.
    \end{enumerate}
  \item The following conditions are equivalent:
    \begin{enumerate}
    \item The following diagram commutes:
      \raisebox{-3pt}{
        $\smalldiagram\xymatrix@R=20pt{ {\Hsource \htensor{\halpha}{\hbeta} H}
          \ar[r]^{V_{\leg{12}}} \ar[d]^{\checkV_{\leg{23}}}
           & {\Hrange \htensor{\halpha}{\hbeta} H}
          \ar[d]^{\checkV_{\leg{23}}} \\
          {\Hsource \htensor{\hbeta}{\alpha} H} \ar[r]^{V_{\leg{12}}}
          & {\Hrange \htensor{\hbeta}{\alpha} H.}  }$
      }
    \item  $(a \rtensorh 1)\checkV=\checkV(a \rtensorh 1)$ in ${\cal
        L}(\checkHsource, \checkHrange)$ for each $a \in A(V)$.
    \item $(1 \rtensorh \Ad_{U}(a))V = V(1 \rtensorh \Ad_{U}(a))$ in
      ${\cal L}(\Hsource, \Hrange)$ for each $a \in A(V)$.
   \item $A(V)$ and $\Ad_{U}(A(V))$ commute.
    \end{enumerate}
  \end{enumerate}
\end{lemma}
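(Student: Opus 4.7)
My plan is to prove part (1) via the chain (a)$\Leftrightarrow$(b)$\Leftrightarrow$(c)$\Leftrightarrow$(d); part (2) is entirely analogous, and can alternatively be recovered from part (1) applied to the balanced $C^{*}$-pseudo-multiplicative unitary $(\hbeta,\beta,\alpha,\halpha,U,\checkV)$ together with the identifications $\hA(\checkV)=\Ad_{U}(A(V))$ and similar from Proposition \ref{proposition:balanced-legs}.

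For (a)$\Leftrightarrow$(b), I would recast the commutativity of the diagram as the operator identity $V_{\leg{23}}\hatV_{\leg{12}}=\hatV_{\leg{12}}V_{\leg{23}}$ and sandwich both sides by $\langle\xi'|_{\leg{3}}\cdots|\xi\rangle_{\leg{3}}$ with $\xi\in\alpha$ and $\xi'\in\beta$. Using the leg-3 slicing identity $\langle\xi'|_{\leg{3}}V_{\leg{23}}|\xi\rangle_{\leg{3}}=1\rtensorh \ha$ for the generator $\ha:=\langle\xi'|_{\leg{2}}V|\xi\rangle_{\leg{2}}$ of $\hA(V)$, together with the fact that $\hatV_{\leg{12}}$ passes through leg-3 slicings unchanged, the diagram yields (b) for every such $\ha$ and hence, by density, for all of $\hA(V)$. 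Conversely, (b) combined with these same slicing identities and the density of $\kalpha{3}H$ and $\kbeta{3}H$ in the relevant fibred products recovers (a).

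For (b)$\Leftrightarrow$(c), substitute the definition $\hatV=\Sigma(U\rtensorh 1)V(U\rtensorh 1)\Sigma$ into (b) and exploit $\Sigma(1\rtensorh \ha)=(\ha\rtensorh 1)\Sigma$ to collapse the outer flips; conjugating the resulting identity on $\Hsource\to\Hrange$ by $U\rtensorh 1$ transforms $\ha\rtensorh 1$ into $\Ad_{U}(\ha)\rtensorh 1$, giving (c). For (c)$\Leftrightarrow$(d), the forward direction slices (c) by $\langle\eta'|_{\leg{2}}\cdots|\eta\rangle_{\leg{2}}$ with $\eta\in\alpha$, $\eta'\in\beta$, using $|\eta\rangle_{\leg{2}}\Ad_{U}(\ha)=(\Ad_{U}(\ha)\rtensorh 1)|\eta\rangle_{\leg{2}}$ (and its adjoint) to produce the commutation relation $\Ad_{U}(\ha)\ha'=\ha'\Ad_{U}(\ha)$ for all generators $\ha'=\langle\eta'|_{\leg{2}}V|\eta\rangle_{\leg{2}}$ of $\hA(V)$. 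The reverse direction applies the same slicing to $T:=(\Ad_{U}(\ha)\rtensorh 1)V-V(\Ad_{U}(\ha)\rtensorh 1)$ and infers $T=0$ from the vanishing of all $\langle\eta'|_{\leg{2}}T|\eta\rangle_{\leg{2}}$, using density of $\kalpha{2}H$ in $\Hsource$ and of $\kbeta{2}H$ in $\Hrange$.

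The main obstacle is organisational rather than conceptual: each equivalence reduces to a slicing identity combined with a density argument for the reverse direction, and the care required is in tracking which $C^{*}$-factorization and which $C^{*}$-base label each leg as one passes between $\Hsource$, $\Hrange$, $\hatHsource$, $\hatHrange$, and iterated fibred products with their compatibility-preserving extensions $\lt$ and $\rt$. Once the three slicing identities $\langle\xi'|_{\leg{3}}V_{\leg{23}}|\xi\rangle_{\leg{3}}=1\rtensorh \ha$, $|\eta\rangle_{\leg{2}}\Ad_{U}(\ha)=(\Ad_{U}(\ha)\rtensorh 1)|\eta\rangle_{\leg{2}}$, and $\Sigma(1\rtensorh \ha)=(\ha\rtensorh 1)\Sigma$ are established, the entire equivalence is formal.
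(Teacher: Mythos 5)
Your proposal is correct and follows essentially the same route as the paper: the paper proves only the equivalence (a)$\Leftrightarrow$(b) of part i), by exactly your leg-3 slicing argument with $\ha=\langle\xi'|_{\leg{2}}V|\xi\rangle_{\leg{2}}$, and dismisses the remaining equivalences as ``similar''. Your explicit treatment of (b)$\Leftrightarrow$(c) (substituting $\hatV=\Sigma U_{\leg{1}}VU_{\leg{1}}\Sigma$ and cancelling the outer unitaries) and of (c)$\Leftrightarrow$(d) (leg-2 slicing plus density) is the intended filling-in of that gap, and the observation that part ii) follows from part i) applied to the predual $(\hbeta,\beta,\alpha,\halpha,U,\checkV)$ via Proposition \ref{proposition:balanced-legs} is consistent with how the paper exploits this symmetry elsewhere.
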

\begin{proof}
  In i),  conditions (a) and (b) are equivalent because
  \begin{align*}
    (a) \ &\Leftrightarrow  \ \forall \xi \in \alpha, \xi' \in \beta:
    \langle \xi'|_{\leg{3}}
    V_{\leg{23}}\hatV_{\leg{12}}|\xi\rangle_{\leg{3}} = \langle
    \xi'|_{\leg{3}}
    \hatV_{\leg{12}}    V_{\leg{23}}|\xi\rangle_{\leg{3}}      \\
    &\Leftrightarrow \ \forall \xi \in \alpha, \xi' \in \beta: (1
    \rtensorh \ha) \hatV = \hatV (1 \rtensorh \ha), \text{ where } \ha
    = \langle\xi'|_{\leg{2}}V|\xi\rangle_{\leg{2}} \ \Leftrightarrow
    \ (b).
  \end{align*}
  The remaining equivalences follow similarly.
\end{proof}
\begin{definition}
  We call a balanced $C^{*}$-pseudo-multiplicative unitary
  $(\alpha,\halpha,\beta,\hbeta,U,V)$ a {\em weak $C^{*}$-pseudo-Kac
    system} if $V$ is well-behaved and if the equivalent conditions in
  Lemma \ref{lemma:weak-kac} hold.
\end{definition}
The notion of a weak $C^{*}$-pseudo-Kac system is symmetric in the
following sense:
\begin{proposition}
  Let $(\alpha,\halpha,\beta,\hbeta,U,V)$ be a weak $C^{*}$-pseudo-Kac
  system.  Then also the following tuples are weak $C^{*}$-pseudo-Kac
  systems:
  \begin{gather} \label{eq:weak-kac-symmetry}
    (\hbeta,\beta,\alpha,\halpha,U,\checkV), \quad
    (\beta,\hbeta,\halpha,\alpha,U,\hatV), \quad
    (\alpha,\halpha,\hbeta,\beta,U,V^{op}).
\end{gather}
\end{proposition}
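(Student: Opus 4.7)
The plan is to verify, for each of the three tuples, the three defining conditions of a weak $C^{*}$-pseudo-Kac system: being a balanced $C^{*}$-pseudo-multiplicative unitary, well-behavedness of the underlying unitary, and the commutation conditions in Lemma~\ref{lemma:weak-kac}. The first condition has already been established in Remarks~\ref{remarks:balanced}(ii).

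For well-behavedness, I would handle $\checkV$ and $\hatV$ by directly invoking the last sentence of Proposition~\ref{proposition:balanced-legs}. For the tuple built on $V^{op}$, I would insert $V^{op}=\Sigma V^{*}\Sigma$ into the definitions of the legs with respect to the relabeled factorizations. Using that $\Sigma$ converts $|\cdot\rangle_{\leg{2}}$ into $|\cdot\rangle_{\leg{1}}$ and vice versa, together with the self-adjointness of $\hA(V)$ and $A(V)$ (which follows from the well-behavedness of $V$), this yields $\hA(V^{op})=A(V)$ and $A(V^{op})=\hA(V)$. The elementary identity $\Sigma(z\rtensorh 1)\Sigma = 1\rtensorh z$ then identifies $\Delta_{V^{op}}$ and $\hDelta_{V^{op}}$ with $\hDelta_{V}$ and $\Delta_{V}$ up to conjugation by $\Sigma$, and the Hopf $C^{*}$-bimodule axioms transfer directly from those of $V$.

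With the legs identified, I would verify the commutation conditions via Lemma~\ref{lemma:weak-kac}~(d). Proposition~\ref{proposition:balanced-legs} and the computation above show that the $\hA$ and $A$ of each of the three new unitaries equal either $\hA(V)$, $A(V)$, or one of their $\Ad_{U}$-conjugates. Since $U^{2}=1$, each instance of condition~1(d) or~2(d) for a new tuple reduces to condition~2(d) or~1(d), respectively, for $V$; both hold by hypothesis.

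I expect the main obstacle to be the bookkeeping in the $V^{op}$ case: one must carefully track how the swap between the $C^{*}$-bases $\cbasesb$ and $\cbaseosb$ and the relabeling of $\alpha,\halpha,\beta,\hbeta$ interact with the flip $\Sigma$ when verifying the Hopf $C^{*}$-bimodule axioms and the intertwining relations \eqref{eq:pmu-intertwine} for $V^{op}$. No new ideas beyond those in Proposition~\ref{proposition:balanced-legs} are required, but the notation is dense and several applications of the computational rules from Remarks~\ref{remarks:balanced}(iii) are needed.
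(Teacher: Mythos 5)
Your proposal is correct and follows essentially the same route as the paper: balancedness from Remark~\ref{remarks:balanced}~ii), well-behavedness of $\checkV$ and $\hatV$ from Proposition~\ref{proposition:balanced-legs}, and reduction of conditions i)(d) and ii)(d) of Lemma~\ref{lemma:weak-kac} to the corresponding conditions for $V$ via the identification of the legs. The only difference is that for $V^{op}$ the paper simply cites \cite[Lemma 4.4]{timmer:cpmu} for $\hA(V^{op})=A(V)$, $A(V^{op})=\hA(V)$ and the well-behavedness, whereas you rederive these identities directly from $V^{op}=\Sigma V^{*}\Sigma$; your computation is correct and merely fills in that outsourced detail.
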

\begin{proof}
  The $C^{*}$-pseudo-multiplicative unitaries $\checkV$, $\hatV$,
  $V^{op}$ are well-behaved by Proposition
  \ref{proposition:balanced-legs} and \cite[Lemma 4.4]{timmer:cpmu},
  respectively, and the tuples \eqref{eq:weak-kac-symmetry} are
  balanced $C^{*}$-pseudo-multiplica\-tive unitaries by Remark
  \ref{remarks:balanced} ii).  Using Proposition
  \ref{proposition:balanced-legs} and \cite[Lemma 4.4]{timmer:cpmu},
  one easily checks that these tuples satisfy the conditions i)(d) and
  ii)(d) of Lemma \ref{lemma:weak-kac}.
\end{proof}
\begin{definition}
  Given a weak $C^{*}$-pseudo-Kac system
  $(\alpha,\halpha,\beta,\hbeta,U,V)$, we call the tuples
  \eqref{eq:weak-kac-symmetry} its {\em predual}, its {\em dual}, and
  its {\em opposite}, respectively.
\end{definition}

\section{Coactions and reduced crossed products}

\paragraph{Coactions of concrete Hopf $C^{*}$-bimodules}
\begin{definition}
  Let $(\cbasesb,H,A,\alpha,\beta,\Delta)$ be a concrete Hopf
  $C^{*}$-bimodule and $(K,C,\gamma)$ a nondegenerate concrete
  $C^{*}$-$\cbasesb$-algebra. Then a {\em coaction} of
  $(\cbasesb,H,A,\alpha,\beta,\Delta)$ on $(K,C,\gamma)$ is a morphism
  $\delta_{C}$ from $(K,C,\gamma)$ to $(K \rtensorcb H, C
  \hfibrecb M(A), \gamma \rt \alpha)$ that makes the
  following diagram commute:
    \begin{align*}
      \xymatrix@C=20pt@R=10pt{
        C \ar[rrr]^{\delta_{C}} \ar[ddd]^{\delta_{C}} & &&
        {C \hfibrecb M(A)} \ar[dd]^{\delta_{C} \ast \Id_{M(A)}} \\
        && & \\
        && & {(C \hfibrecb M(A)) \hfibre{\gamma \rt
            \alpha}{\beta} M(A)} \ar@{^(->}[d] \\ 
        {C \hfibrecb M(A)} \ar[rr]^(0.38){\Id_{C} \ast \Delta} &&
        {C \hfibre{\gamma}{\beta \lt \beta} M(A \hfibre{\alpha}{\beta} A)}
        \ar@{^(->}[r] &
        {\cal L}(K \htensor{\gamma}{\beta} \Hrange).
      }
    \end{align*}
    We also refer to the tuple $(K,C,\gamma,\delta_{C})$ as a
    coaction.  We call such a coaction {\em fine} if
    $\delta_{C}$ is injective and $\lnspan \delta_{C}(C)\kbeta{2}\rnspan =
    \lnspan \kbeta{2}C\rnspan$ as subsets of ${\cal L}(K,K
    \rtensorcb H)$.

    A {\em covariant morphism} between coactions
    $(K,C,\gamma,\delta_{C})$ and $(L,D,\delta,\delta_{D})$ is a
    morphism $\rho$
    from $(K,C,\gamma)$ to $(L,M(D),\delta)$ that makes the following
    diagram commute:
    \begin{align*}
      \xymatrix@C=35pt{
        C \ar[r]^{\rho} \ar[d]^{\delta_{C}} & M(D) \ar[d]^{\delta_{D}} \\
        {C \hfibrecb M(A)} \ar[r]^{\rho \ast \Id_{M(A)}} &
        {M(D \hfibre{\delta}{\beta} A)}.
      }
    \end{align*}
\end{definition}
\begin{remarks} \label{remarks:coaction}
  \begin{enumerate}
  \item Note that by \cite[Remark 3.12]{timmer:cpmu}, the
    $C^{*}$-algebra $\delta_{C}(C)$ and hence also the $C^{*}$-algebra
    $C \hfibrecb A$ is nondegenerate.
  \item Evidently, the class of all coactions of a fixed concrete
    Hopf-$C^{*}$-bimodule forms a category with respect to covariant
    morphisms.
  \item For every concrete Hopf $C^{*}$-bimodule
    $(\cbasesb,H,A,\alpha,\beta,\Delta)$, the triple 
    $(H,A,\alpha,\Delta)$ is a coaction.
  \end{enumerate}
\end{remarks}

We shall study coactions of concrete Hopf $C^{*}$-bimodules in a
separate article.

\paragraph{Reduced crossed products for coactions of $\big(\cbasesb,H, A,\alpha,\beta,\Delta_{V}\big)$}

Till the end of this section, we fix a weak $C^{*}$-pseudo-Kac system
$(\alpha,\halpha,\beta,\hbeta,U,V)$. To shorten the notation, we put
\begin{align*}
    \hA&:=\hA(V), &\hDelta&:=\hDelta_{V}, & A&:=A(V),
    &\Delta:=\Delta_{V},
\end{align*}
and write $\hAbimod$ and
$\Abimod$ for the concrete Hopf
$C^{*}$-bimodules $\big(\cbaseosb,H,\hA,\hbeta,\alpha,\hDelta\big)$ and
$\big(\cbasesb,H, A,\alpha,\beta,\Delta\big)$, respectively.

First, we define reduced crossed products for coactions of  $\Abimod$:
\begin{definition} \label{definition:rcp} Let
  $(K,C,\gamma,\delta_{C})$ be a coaction of $\Abimod$. The
  associated {\em reduced crossed product} is the $C^{*}$-subalgebra
  $C \rtimes_{\delta_{C},r} \hA \subseteq {\cal L}(K \rtensorcb H)$
  generated by
  \begin{align} \label{eq:rcp-set}
    \delta(C)(1 \rtensorh \hA) \subseteq {\cal L}(K
    \rtensorcb H).
  \end{align}
  If $\delta_{C}$ is understood, we shortly write $C \rtimes_{r}
  \hA$ for $C \rtimes_{\delta_{C},r} \hA$.
\end{definition}
\begin{remark} \label{remark:rcp-set} In the situation above, $C
  \rtimes_{r} \hA \subseteq C \hfibrecb {\cal L}(H)$,
  as can be seen from the inclusions $\delta_{C}(C) \subseteq C \hfibrecb
  M(A)$ and
  \begin{align*}
    (C \hfibrecb M(A))(1 \rtensorh\hA)|\gamma\rangle_{\leg{1}} &=
    (C \hfibrecb M(A)) |\gamma\rangle_{\leg{1}} \hA \subseteq
    \lnspan    |\gamma\rangle_{\leg{1}} M(A)\hA \rnspan, \\
    (C \hfibrecb M(A))(1 \rtensorh \hA)\kbeta{2} &\subseteq
    \lnspan (C \hfibrecb M(A)) \kbeta{2} \rnspan \subseteq \lnspan
    \kbeta{2}C\rnspan;
  \end{align*}
  here, we used the inclusion $\hA\beta \subseteq \beta$
  \cite[Lemma 4.5]{timmer:cpmu}.
\end{remark}
Frequently, it is useful to know that  the set  
\eqref{eq:rcp-set} is linearly dense in $C \rtimes_{r} \hA$:
\begin{proposition} \label{proposition:rcp-set} Let
  $(K,C,\gamma,\delta_{C})$ be a coaction of $\Abimod$.  Then
  \begin{gather*}
    C \rtimes_{r} \hA = \lnspan \delta_{C}(C)(1 \rtensorh
    \hA)\rnspan.
  \end{gather*}
\end{proposition}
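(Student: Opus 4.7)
The plan is to show that $S := \lnspan \delta_{C}(C)(1 \rtensorh \hA)\rnspan$ is a $*$-subalgebra of the ambient $C^{*}$-algebra $C \hfibrecb {\cal L}(H)$, which contains $S$ by Remark \ref{remark:rcp-set}. Since $S$ is norm-closed by construction and contains the generating set of $C \rtimes_{r} \hA$, this will give the desired equality.

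Both closure under multiplication and closure under adjoints reduce to the single commutation-type identity
\begin{align*}
  (1 \rtensorh \ha)\, \delta_{C}(c) \in S \qquad \text{for all } \ha \in \hA \text{ and } c \in C.
\end{align*}
Indeed, the factorization $\delta_{C}(c_{1})(1 \rtensorh \ha_{1})\cdot \delta_{C}(c_{2})(1 \rtensorh \ha_{2}) = \delta_{C}(c_{1})\cdot [(1 \rtensorh \ha_{1})\delta_{C}(c_{2})]\cdot (1 \rtensorh \ha_{2})$ yields closure under multiplication from this identity (using that $\delta_{C}$ is a homomorphism and $\hA$ is an algebra), while closure under adjoints follows from $(\delta_{C}(c)(1 \rtensorh \ha))^{*} = (1 \rtensorh \ha^{*})\delta_{C}(c^{*})$.

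To establish the commutation identity I would follow the Baaj--Skandalis strategy, adapted to the $C^{*}$-pseudo-multiplicative setting. Take a generating element $\ha = \langle\xi'|_{\leg{2}} V |\xi\rangle_{\leg{2}} \in \hA$ with $\xi \in \alpha$, $\xi' \in \beta$ and observe that, after passing to the three-leg space $K \rtensorcb H \rtensorab H$, the operator $1 \rtensorh \ha$ on $K \rtensorcb H$ is realized as the third-leg slice $\langle\xi'|_{\leg{3}}\, V_{\leg{23}}\, |\xi\rangle_{\leg{3}}$. Since $|\xi\rangle_{\leg{3}}$ commutes with $\delta_{C}(c)_{\leg{12}}$, everything reduces to understanding $V_{\leg{23}}\, \delta_{C}(c)_{\leg{12}}$. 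Here the coaction axiom $(\delta_{C} \ast \Id) \circ \delta_{C} = (\Id \ast \Delta) \circ \delta_{C}$, together with the formula $\Delta(a) = V(a \rtensorh 1)V^{*}$ defining $\Delta = \Delta_{V}$, gives the decisive intertwining
\begin{align*}
  V_{\leg{23}}\, \delta_{C}(c)_{\leg{12}} = (\delta_{C} \ast \Id)(\delta_{C}(c))\, V_{\leg{23}}.
\end{align*}
Slicing both sides by $\langle\xi'|_{\leg{3}}\ldots|\xi\rangle_{\leg{3}}$ then exhibits $(1 \rtensorh \ha)\delta_{C}(c)$ as a sum of terms of the form $\delta_{C}(c')(1 \rtensorh \ha')$.

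The main obstacle I anticipate is bookkeeping the $C^{*}$-factorizations on the three-leg space $K \rtensorcb H \rtensorab H$ and verifying that the third-leg slice of $(\delta_{C} \ast \Id)(\delta_{C}(c))\, V_{\leg{23}}$ indeed takes values in $\delta_{C}(C)(1 \rtensorh \hA)$, rather than merely in $\delta_{C}(C)(1 \rtensorh {\cal L}(H))$. Making this rigorous will require careful application of the intertwining identities \eqref{eq:pmu-intertwine} for $V$ to match factorizations on either side, together with the inclusion $M(A) \subseteq {\cal L}(H_{\hbeta})$ so that the second-leg multiplier components of $(\delta_{C} \ast \Id)(\delta_{C}(c))$ can be absorbed into slice expressions of the form $\langle\xi'|_{\leg{2}} V |\xi\rangle_{\leg{2}}$ that by definition generate $\hA$.
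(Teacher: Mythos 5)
Your proposal follows essentially the same route as the paper: reduce to showing $\lnspan (1 \rtensorh \hA)\delta_{C}(C)\rnspan \subseteq \lnspan \delta_{C}(C)(1 \rtensorh \hA)\rnspan$, write $1 \rtensorh \ha$ as a third-leg slice $\bbeta{3}(1 \rtensorh V)\kalpha{3}$, and pull $1 \rtensorh V$ past $\delta_{C}(C) \rtensorh 1$ via the coaction identity to get $\delta_{C}^{(2)}(C)(1 \rtensorh V)$. The one step you flag as an anticipated obstacle is closed in the paper exactly as you suspect: coassociativity gives $\delta_{C}^{(2)}(C) = (\delta_{C}\ast\Id)(\delta_{C}(C)) \subseteq \delta_{C}(C)\hfibre{\gamma \rt \alpha}{\beta}M(A)$, and the defining property of the fiber product ($T^{*}\kbeta{3} \subseteq \lnspan\kbeta{3}\delta_{C}(C)\rnspan$) lets $\bbeta{3}$ pass to the left as $\delta_{C}(C)\bbeta{3}$, so the slice lands in $\lnspan\delta_{C}(C)\bbeta{3}(1\rtensorh V)\kalpha{3}\rnspan = \lnspan\delta_{C}(C)(1\rtensorh\hA)\rnspan$.
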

\begin{proof}
  We only need to prove $\lnspan (1 \rtensorh \hA) \delta_{C}(C)
  \rnspan \subseteq \lnspan \delta_{C}(C)(1 \rtensorh \hA)\rnspan$. By
  definition of $\hA$, 
  \begin{align*}
    \lnspan (1 \rtensorh \hA) \delta_{C}(C) \rnspan = \lnspan
    \bbeta{3}(1 \rtensorh V)\kalpha{3}\delta_{C}(C)\rnspan =
    \lnspan \bbeta{3}(1 \rtensorh V)(\delta_{C}(C) \rtensorh
    1)\kalpha{3}\rnspan. 
  \end{align*}
  Note that $\delta_{C}(C) \rtensorh 1 \subseteq {\cal L}(K \rtensorcb
  \Hsource)$ is well-defined because $\delta_{C}(C) \subseteq C
  \hfibrecb M(A)$ commutes with $1 \rtensorh \rho_{\hbeta}(\frakB)$ by
  \cite[Lemma 3.8]{timmer:cpmu}. Put  $\delta^{(2)}_{C}:=(\Id \ast \Delta)\circ
  \delta$. By definition of $\Delta$,
  \begin{align*}
    \lnspan \bbeta{3}(1 \rtensorh V)(\delta_{C}(C) \rtensorh
    1)\kalpha{3}\rnspan = \lnspan \bbeta{3}(\delta_{C}^{(2)}(C))(1
    \rtensorh V)\kalpha{3}\rnspan.
  \end{align*}
  Since $\delta^{(2)}_{C}(C) = (\Id \ast \Delta)(\delta_{C}(C)) =
  (\delta_{C} \ast \Id)(\delta_{C}(C)) \subseteq \delta_{C}(C)
  \hfibre{\gamma \rt \alpha}{\beta} M(A)$,
  \begin{gather*}
    \lnspan \bbeta{3}(\delta_{C}^{(2)}(C))(1
    \rtensorh V)\kalpha{3}\rnspan \subseteq \lnspan
    \delta_{C}(C)\bbeta{3}(1 \rtensorh V)\kalpha{3}\rnspan = \lnspan
    \delta_{C}(C)(1 \rtensorh \hA)\rnspan. \qedhere
  \end{gather*}
\end{proof}
\begin{corollary}
  Let $(K,C,\gamma,\delta_{C})$ be a coaction of $\Abimod$.  Then
  $\delta_{C}(C)$ and $1 \rtensorh \hA$ are nondegenerate
  $C^{*}$-subalgebras of $M(C \rtimes_{r} \hA)$, and the maps $c
  \mapsto \delta_{C}(c)$ and $\ha \mapsto 1 \rtensorh \ha$ extend to
  $*$-homomorphisms $M(C) \to M(C \rtimes_{r} \hA)$ and $M(\hA) \to
  M(C \rtimes_{r} \hA)$, respectively. \qed
\end{corollary}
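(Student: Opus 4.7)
The plan is to reduce everything to the density formula $C \rtimes_{r} \hA = \lnspan \delta_{C}(C)(1 \rtensorh \hA)\rnspan$ from Proposition \ref{proposition:rcp-set}, together with the obvious fact that $\delta_{C}(C)$ and $1 \rtensorh \hA$ are $C^{*}$-subalgebras of ${\cal L}(K \rtensorcb H)$ (the first because $\delta_{C}$ is a $*$-homomorphism, the second because $\hA \subseteq {\cal L}(H_{\beta})$ is a $C^{*}$-algebra and $\ha \mapsto 1 \rtensorh \ha$ is a $*$-homomorphism).

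First I would show the inclusions $\delta_{C}(C), 1 \rtensorh \hA \subseteq M(C \rtimes_{r} \hA)$. Writing $B := C \rtimes_{r} \hA$, it suffices to check that these subalgebras multiply $B$ into itself on both sides. For $\delta_{C}(C)$, the left action is immediate from $\delta_{C}(c)\delta_{C}(c')(1 \rtensorh \ha) = \delta_{C}(cc')(1 \rtensorh \ha)$, and the right action follows by taking adjoints of the identity $[\delta_{C}(C)(1 \rtensorh \hA)] = B$, which yields $[(1 \rtensorh \hA)\delta_{C}(C)] = B^{*} = B$, so $\delta_{C}(c')(1 \rtensorh \ha) \delta_{C}(c) \in [\delta_{C}(C)(1 \rtensorh \hA)\delta_{C}(C)] \subseteq [B \cdot \delta_{C}(C)] \subseteq B$. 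For $1 \rtensorh \hA$, the right action is trivial since $(1 \rtensorh \hA)^{2} \subseteq 1 \rtensorh \hA$, and the left action uses the same adjoint trick.

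Next, nondegeneracy: since $\delta_{C}(C)$ is a $C^{*}$-algebra, $[\delta_{C}(C)\delta_{C}(C)] = \delta_{C}(C)$, hence
\begin{align*}
\lnspan \delta_{C}(C) B \rnspan = \lnspan \delta_{C}(C)\delta_{C}(C)(1 \rtensorh \hA)\rnspan = \lnspan \delta_{C}(C)(1 \rtensorh \hA)\rnspan = B.
\end{align*}
Similarly, using $[(1 \rtensorh \hA)\delta_{C}(C)] = B$ established above and $[\hA \hA] = \hA$,
\begin{align*}
\lnspan (1 \rtensorh \hA) B\rnspan = \lnspan (1 \rtensorh \hA)\delta_{C}(C)(1 \rtensorh \hA)\rnspan = \lnspan B (1 \rtensorh \hA)\rnspan = \lnspan \delta_{C}(C)(1 \rtensorh \hA)\rnspan = B.
\end{align*}

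Finally, the extension to multipliers is the standard fact that a nondegenerate $*$-homomorphism into a multiplier algebra extends uniquely and strictly continuously to the multiplier algebra of the domain. Applied to the nondegenerate $*$-homomorphisms $c \mapsto \delta_{C}(c)$ from $C$ to $M(B)$ and $\ha \mapsto 1 \rtensorh \ha$ from $\hA$ to $M(B)$, this produces the asserted $*$-homomorphisms $M(C) \to M(B)$ and $M(\hA) \to M(B)$. No step is really a serious obstacle here; the only subtlety is the commutation step $[(1 \rtensorh \hA)\delta_{C}(C)] = [\delta_{C}(C)(1 \rtensorh \hA)]$, which is handled by adjoints thanks to the self-adjointness of both $\hA$ and $\delta_{C}(C)$.
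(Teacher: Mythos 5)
Your proof is correct and is precisely the argument the paper leaves implicit: the corollary carries no proof and is meant to follow at once from the density formula of Proposition~\ref{proposition:rcp-set}, which is exactly how you use it (together with the adjoint identity $\lnspan (1 \rtensorh \hA)\delta_{C}(C)\rnspan = (C \rtimes_{r}\hA)^{*} = C \rtimes_{r}\hA$ and the standard extension of nondegenerate $*$-homomorphisms to multiplier algebras). One cosmetic point: in your chain for $B\,\delta_{C}(c) \subseteq B$ you should regroup as $\delta_{C}(C)\cdot\lnspan(1 \rtensorh \hA)\delta_{C}(C)\rnspan \subseteq \lnspan\delta_{C}(C)\,B\rnspan = \lnspan\delta_{C}(C)(1 \rtensorh \hA)\rnspan = B$, since the inclusion $\lnspan B\cdot\delta_{C}(C)\rnspan \subseteq B$ you wrote as the last step is itself the claim being proved.
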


The reduced crossed product $C \rtimes_{r} \hA$ carries a dual
coaction $\hdelta_{C}$ of $\hAbimod$:
\begin{theorem}
  Let $(K,C,\gamma,\delta_{C})$ be a coaction of $\Abimod$.  Then
  there exists a coaction
  \begin{align} \label{eq:rcp-dual}
    \big(K \rtensorcb H, \, C \rtimes_{r} \hA, \, \gamma \rt
    \hbeta, \, \hdelta_{C}\big)
  \end{align}
  of the concrete Hopf $C^{*}$-bimodule
  $\hAbimod$ such that   for all $c\in C$ and $\ha \in \hA$,
  \begin{align} \label{eq:rcp-dual-coaction}
    \hdelta_{C}\big(\delta_{C}(c)(1 \rtensorh \ha)\big) =
    \big(\delta_{C}(c) \rtensorh 1\big) \big(1 \rtensorh
    \hDelta(\ha)\big).
  \end{align}
  If the coaction $(H,A,\hbeta,\hDelta)$ is  fine, then also the coaction
  \eqref{eq:rcp-dual} is fine.
\end{theorem}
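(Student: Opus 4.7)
The strategy is to define $\hdelta_C$ on the dense generating set $\delta_C(C)(1 \rtensorh \hA)$ of $C \rtimes_{r} \hA$ (provided by Proposition \ref{proposition:rcp-set}) via the formula \eqref{eq:rcp-dual-coaction}, and verify that this extends to a well-defined $*$-homomorphism with the desired target and coassociativity. The cleanest realization is to exhibit $\hdelta_C$ as the restriction of an inner $*$-homomorphism on an ambient $\cal L$-algebra, implemented by a unitary $W$ built from $V$ and acting on $K \rtensor{\gamma}{\frakH}{\beta \lt \alpha} \Hsource \cong (K \rtensorcb H) \htensor{\gamma \rt \hbeta}{\alpha} H$. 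On the generator $1 \rtensorh \ha$, conjugation by $W$ produces $1 \rtensorh V^{*}(1 \rtensorh \ha) V = 1 \rtensorh \hDelta(\ha)$ directly from the definition of $\hDelta$. On the generator $\delta_C(c)$, one invokes the coaction identity $(\delta_C \ast \Id)(\delta_C(c)) = (\Id \ast \Delta)(\delta_C(c))$ satisfied by $\delta_C$, which — after tracing through the associativity identifications — is precisely the statement needed to produce the first factor $\delta_C(c) \rtensorh 1$. Multiplying the two yields \eqref{eq:rcp-dual-coaction}.

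Given this, the image of $\hdelta_C$ lies in $(C \rtimes_{r} \hA) \hfibre{\gamma \rt \hbeta}{\alpha} M(\hA)$ because the right-hand side of \eqref{eq:rcp-dual-coaction} factors as $(\delta_C(c) \rtensorh 1)(1 \rtensorh \hDelta(\ha))$: the first factor trivially lies in $(C \rtimes_{r} \hA) \rtensorh 1$, and the second lies in $1 \rtensorh M(\hA \hfibre{\hbeta}{\alpha} \hA)$ by the concrete Hopf $C^{*}$-bimodule structure of $\hAbimod$. The slice conditions defining the fiber product are checked in the same spirit as Remark \ref{remark:rcp-set}, using the commutation relations for $\hA$, $\rho_{\alpha}(\frakBo)$ and $\rho_{\hbeta}(\frakB)$ from \cite[Lemma 4.5]{timmer:cpmu}. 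The morphism property (respecting $C^{*}$-factorizations) follows from the explicit intertwining furnished by $W$. Coassociativity is verified on the generators $\delta_C(c)(1 \rtensorh \ha)$ by applying both composites in the pentagon-type diagram of Remark \ref{remarks:coaction}, expanding via \eqref{eq:rcp-dual-coaction} twice, and matching terms using coassociativity of $\hDelta$ (from $\hAbimod$) together with the coaction axiom for $\delta_C$ applied once more.

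The main obstacle, apart from identifying the correct implementing unitary, is notational: carefully tracking the associativity identifications for the two- and three-fold $C^{*}$-relative tensor products, and keeping straight which legs carry which $C^{*}$-factorizations. The pentagon equation for $V$ is the key structural input wrapped inside these manipulations. Finally, for the fineness claim: if the canonical self-coaction of $\hAbimod$ is fine, then $\hDelta$ is injective and $[\hDelta(\hA)\kalpha{2}] = [\kalpha{2}\hA]$; injectivity of $\hdelta_C$ then follows from the inner-unitary implementation, while the required density condition for $\hdelta_C$ follows by combining \eqref{eq:rcp-dual-coaction}, Proposition \ref{proposition:rcp-set} (which writes everything in terms of the generators $\delta_C(c)(1 \rtensorh \ha)$), and the analogous density for $\hDelta$.
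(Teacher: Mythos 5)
There is a genuine gap at the heart of your construction: the choice of implementing unitary and the structural property that makes conjugation fix the factor $\delta_{C}(c) \rtensorh 1$. The paper defines $\hdelta_{C}(x) = (1 \rtensorh \checkV)(x \rtensorh 1)(1 \rtensorh \checkV^{*})$, using the auxiliary unitary $\checkV = \Sigma U_{\leg{2}} V U_{\leg{2}}\Sigma$ rather than $V$ itself. Two facts then give \eqref{eq:rcp-dual-coaction}: first, $\checkV(a \rtensorh 1)\checkV^{*} = a \rtensorh 1$ for all $a \in A$ --- this is exactly condition ii) of Lemma \ref{lemma:weak-kac}, i.e.\ the defining commutation axiom of a weak $C^{*}$-pseudo-Kac system --- and since $\delta_{C}(C) \subseteq C \hfibrecb M(A)$ this fixes $\delta_{C}(c) \rtensorh 1$; second, $\checkV(\ha \rtensorh 1)\checkV^{*} = \Delta_{\checkV}(\ha) = \hDelta(\ha)$ by Proposition \ref{proposition:balanced-legs}. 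Your proposal replaces both ingredients by the wrong ones. Conjugating $\delta_{C}(c) \rtensorh 1$ by $V_{\leg{23}}$ produces $(\Id \ast \Delta)(\delta_{C}(c)) = \delta_{C}^{(2)}(c)$, not $\delta_{C}(c) \rtensorh 1$; the coaction identity you invoke only rewrites this as $(\delta_{C}\ast\Id)(\delta_{C}(c))$, which is still not of the required form. (That computation is precisely what occurs in the duality theorem of Section 5, where conjugation by $1 \rtensorh V$ is applied to the iterated crossed product --- it does not construct the dual coaction.) Likewise, the element $1 \rtensorh \ha$ carries $\ha$ in leg $2$, so to obtain $V^{*}(1\rtensorh\ha)V = \hDelta(\ha)$ sitting in legs $2,3$ one must first move $\ha$ across a flip; the $\Sigma$ and the symmetry $U$ hidden inside $\checkV$ do exactly this bookkeeping, and $U$ cannot be dispensed with.

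Consequently the key structural input is not the pentagon equation but the weak Kac system axiom (equivalently, that $A(V)$ commutes with $\Ad_{U}(A(V))$), together with the identification $\Delta_{\checkV}=\hDelta_{V}$; without these, the map you define is not even consistent on products of the two kinds of generators, and formula \eqref{eq:rcp-dual-coaction} fails. The remaining parts of your sketch --- the slice conditions for landing in $(C \rtimes_{r}\hA)\hfibre{\gamma\rt\hbeta}{\alpha}M(\hA)$, coassociativity deduced from \eqref{eq:rcp-dual-coaction} and coassociativity of $\hDelta$, and the fineness argument --- are in line with the paper once the correct implementation is in place.
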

\begin{proof}
 The triple $(K \rtensorcb H, C \rtimes_{r} \hA,
  \gamma \rt \hbeta)$ is a concrete $C^{*}$-$\cbaseosb$-algebra
  because
  \begin{align*}
  \rho_{(\gamma \rt \hbeta)}(\frakB)\big(C \rtimes_{r}
  \hA\big) \subseteq \lnspan (1 \rtensorh
  \rho_{\hbeta}(\frakB)\hA)\delta_{C}(C)\rnspan = C \rtimes_{r} \hA  
  \end{align*}
  by Proposition \ref{proposition:rcp-set}. Moreover, $C \rtimes_{r}
  \hA$ is nondegenerate because $\delta_{C}(C)$ and $1 \rtensorh \hA$
  are nondegenerate (see Remark \ref{remarks:coaction} i)).

  Consider the $*$-homomorphism
  \begin{align*}
    \hdelta_{C} \colon C \rtimes_{r} \hA \to {\cal L}(K \rtensorcb
    \Hsource), \quad
    x \mapsto (1 \rtensorh \checkV)(x \rtensorh 1)(1 \rtensorh
    \checkV^{*}).
  \end{align*}
  Since $\checkV(a\rtensorh 1)\checkV^{*}=a \rtensorh 1$ for all $a
  \in A$ (Lemma \ref{lemma:weak-kac}) and $\delta_{C}(C) \subseteq
  C \hfibrecb M(A)$, 
  \begin{align*}
   (1\rtensorh \checkV)(\delta_{C}(c) \rtensorh 1)(1 \rtensorh
    \checkV^{*}) = \delta_{C}(c) \rtensorh 1 \quad \text{for all } c
    \in C.
  \end{align*}
  Moreover, by Proposition \ref{proposition:balanced-legs},
  $\checkV(\ha \rtensorh 1)\checkV^{*} = \hDelta(\ha)$ for all
  $\ha \in \hA$.  Consequently, Equation \eqref{eq:rcp-dual-coaction} holds
  for all $c \in C$ and $\ha \in \hA$. 

  Let us show that $\hdelta_{C}(C \rtimes_{r} \hA) \subseteq (C
  \rtimes_{r} \hA) \rfibre{\gamma \rt \hbeta}{\alpha} M(\hA)$. By
  Proposition \ref{proposition:rcp-set} and Equation
  \eqref{eq:rcp-dual-coaction},
  \begin{align*}
    \hdelta_{C}(C \rtimes_{r} \hA) = \lnspan \big(\delta_{C}(C)
    \rtensorh 1\big)\big(1 \rtensorh \hDelta(\hA)\big)\rnspan.
  \end{align*}
  Since $\hDelta(\hA)\kalpha{2} \subseteq \lnspan
  \kalpha{2}\hA\rnspan$,
  \begin{gather} \label{eq:rcp-dual-fine}
    \begin{aligned}
      \hdelta_{C}(C \rtimes_{r} \hA) \kalpha{3} &\subseteq \lnspan
      \big(\delta_{C}(C) \rtensorh 1\big)\kalpha{3} (1 \rtensorh \hA)
      \rnspan \\ &= \lnspan \kalpha{3}\delta_{C}(C)(1 \rtensorh
      \hA)\rnspan = \lnspan \kalpha{3}(C \rtimes_{r} \hA)\rnspan.
    \end{aligned}
  \end{gather}
  On the other hand, since $ A\hbeta\subseteq \hbeta$ \cite[Lemma
  4.5]{timmer:cpmu},
  \begin{align*}
    \delta_{C}(C)|\gamma \rt \hbeta\rangle   \subseteq \lnspan (C \hfibrecb
    M(A))|\gamma\rangle_{\leg{1}}\hbeta\rnspan \subseteq \lnspan
    |\gamma\rangle_{\leg{1}} M(A) \hbeta\rnspan =
    |\gamma\rangle_{\leg{1}}\hbeta = |\gamma \rt \hbeta\rangle, 
  \end{align*}
  and therefore,
  \begin{align*}
    \hdelta_{C}(C \rtimes_{r} \hA) |\gamma \rt
    \hbeta\rangle_{\leg{12}} &= \lnspan \big(1 \rtensorh
    \hDelta(\hA)\big)\big(\delta_{C}(C) \rtensorh 1\big) |\gamma \rt
    \hbeta\rangle_{\leg{12}}\rnspan \\ &\subseteq \lnspan \big(1
    \rtensorh \hDelta(\hA)\big) |\gamma \rt
    \hbeta\rangle_{\leg{12}}\rnspan \\ &= \lnspan
    |\gamma\rangle_{\leg{1}} \hDelta(\hA)|\hbeta\rangle_{\leg{1}}\rnspan \subseteq \lnspan
    |\gamma\rangle_{\leg{1}} |\hbeta\rangle_{\leg{1}} \hA\rnspan =
    \lnspan |\gamma \rt \hbeta\rangle_{\leg{12}} \hA\rnspan.
  \end{align*}
  
  Let us show that the $*$-homomorphism $\hdelta_{C}$ is a morphism of
  concrete $C^{*}$-$\cbaseosb$-algebras. By definition of
  $\hdelta_{C}$, 
  \begin{align*}
    \hdelta_{C}(x)  (1 \rtensorh \checkV)|\xi\rangle_{\leg{3}}
    =(1 \rtensorh \checkV)(x \rtensorh 1)|\xi\rangle_{\leg{3}}  =
    (1 \rtensorh \checkV)|\xi\rangle_{\leg{3}}x 
  \end{align*}
  for each $x \in C \rtimes_{r} \hA$ and $\xi \in \hbeta$, and
  therefore,
  \begin{align*}
    \gamma \rt \hbeta \rt \hbeta = \gamma \rt \checkV_{*}(\hbeta \lt
    \hbeta) &=  \lnspan (1 \rtensorh \checkV)\khbeta{3} (\gamma \rt
    \hbeta)\rnspan \\
    &\subseteq \lnspan {\cal L}^{\hdelta_{C}}\big((K
    \hfibrecb H)_{\gamma \rt \hbeta} , (K \hfibrecb \Hsource)_{\gamma
      \rt \hbeta \rt \hbeta}\big) (\gamma \rt \hbeta)\rnspan.
  \end{align*}

  Equation \eqref{eq:rcp-dual-coaction} implies that the $*$-homomorphisms
  \begin{align*}
    (\hdelta_{C} \ast \Id) \circ \hdelta_{C}, (\Id \ast \hDelta) \circ
    \hdelta_{C} \colon C \rtimes_{r} \hA \to {\cal L}\big(K \rtensorcb
    \Hsource \htensor{\hbeta}{\alpha} H\big)
  \end{align*}
  are both given by $\delta_{C}(c)(1 \rtensorh \ha) \mapsto
  \big(\delta_{C}(c) \rtensorh 1 \rtensorh 1\big)\big(1 \rtensorh
  \hDelta^{(2)}(\ha)\big)$ for all $c \in C$, $\ha \in \hA$, where
  $\hDelta^{(2)}:=(\hDelta \ast \Id) \circ \hDelta = (\Id \ast
  \hDelta) \circ \hDelta \colon \hA \to {\cal L}(\Hsource
  \htensor{\hbeta}{\alpha} H)$.

  Summarizing, we find that \eqref{eq:rcp-dual} is a coaction of
  $\hAbimod$ as claimed.  By construction, $\hdelta_{C}$ is injective,
  and Equation \eqref{eq:rcp-dual-fine} shows that this coaction is
  fine if the coaction  $(H,A,\hbeta,\hDelta)$ is  fine.
\end{proof}

\begin{definition}  \label{definition:rcp-dual-coaction}
  Let $(K,C,\gamma,\delta_{C})$ be a coaction of $\Abimod$.   We
  call $\big(K \rtensorcb H, \, C \rtimes_{r} \hA, \, \gamma \rt
  \hbeta, \, \hdelta_{C}\big)$ the associated {\em dual coaction} of
  $\hAbimod$.
\end{definition}
The reduced crossed product construction is functorial in the
following sense:
\begin{proposition}
  Let $\rho$ be a covariant morphism between a coaction
  $(K,C,\gamma,\delta_{C})$ and a coaction $(L,D,\delta,\delta_{D})$ of
  $\Abimod$. Then there exists
  a unique covariant morphism $\rho \rtimes_{r} \Id$ between
  the associated dual coactions such that
  for all $c \in C$ and $\ha \in \hA$,
  \begin{align} \label{eq:rcp-functorial}
    (\rho \rtimes_{r} \Id)\big(\delta_{C}(c)(1 \rtensorh \ha)\big) =
    \delta_{D}(\rho(c))(1 \rtensorh \ha). 
  \end{align}
\end{proposition}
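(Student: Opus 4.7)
The plan is to exploit the intertwining operators in ${\cal L}^{\rho}(K_{\gamma},L_{\delta})$ together with the existence of the homomorphism $\rho \ast \Id_{M(A)}$ encoded in the definition of covariant morphism. Uniqueness of any $*$-homomorphism $\rho \rtimes_{r} \Id$ satisfying \eqref{eq:rcp-functorial} is immediate from Proposition \ref{proposition:rcp-set}, which shows that $C \rtimes_{r} \hA = \lnspan \delta_{C}(C)(1 \rtensorh \hA)\rnspan$; the values of any such map on the dense generating subspace are prescribed.

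For existence, recall that by definition of morphism $\delta = [{\cal L}^{\rho}(K_{\gamma},L_{\delta})\gamma]$, and for each $V \in {\cal L}^{\rho}(K_{\gamma},L_{\delta})$ the operator $V \rtensorh \Id_{H} \colon K \rtensorcb H \to L \htensor{\delta}{\beta} H$ is well-defined. The defining property of $\rho \ast \Id_{M(A)}$ applied with $X = V$ and $Y = \Id_{H}$, combined with the covariance condition $(\rho \ast \Id_{M(A)}) \circ \delta_{C} = \delta_{D} \circ \rho$ and the fact that $V \rtensorh \Id_{H}$ commutes with $1 \rtensorh \ha$ (because the two operators act on disjoint legs of the $C^{*}$-relative tensor product), yields for all $c \in C$, $\ha \in \hA$,
\[
 (V \rtensorh \Id_{H})\bigl[\delta_{C}(c)(1 \rtensorh \ha)\bigr] = \bigl[\delta_{D}(\rho(c))(1 \rtensorh \ha)\bigr](V \rtensorh \Id_{H}).
\]
This motivates defining $(\rho \rtimes_{r} \Id)(x)$ as the unique operator on $L \htensor{\delta}{\beta} H$ such that $(\rho \rtimes_{r} \Id)(x)(V \rtensorh \Id_{H}) = (V \rtensorh \Id_{H})x$ for all $x \in C \rtimes_{r} \hA$ and all $V \in {\cal L}^{\rho}(K_{\gamma},L_{\delta})$.

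The main obstacle is showing that this prescription yields a well-defined bounded $*$-homomorphism landing in $M(D \rtimes_{r} \hA)$. The key point is that the span $[{\cal L}^{\rho}(K_{\gamma},L_{\delta})\gamma\frakH] = [\delta\frakH] = L$, so the images of the $V \rtensorh \Id_{H}$ are dense in $L \htensor{\delta}{\beta} H$, making the intertwining condition at most uniquely solvable; existence of such a solution is furnished by the explicit formula $\delta_{C}(c)(1 \rtensorh \ha) \mapsto \delta_{D}(\rho(c))(1 \rtensorh \ha) \in D \rtimes_{r} \hA$, extended by linearity and continuity, which by the displayed identity does intertwine the right-hand side as required. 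Multiplicativity, $*$-preservation, and the containment of the image in $M(D \rtimes_{r} \hA)$ then follow from the corresponding properties of the intertwiners. Finally, $\rho \rtimes_{r} \Id$ is a covariant morphism: compatibility with the factorizations $\gamma \rt \hbeta$ and $\delta \rt \hbeta$ is witnessed by the operators $V \rtensorh \Id_{H} \in {\cal L}^{\rho \rtimes_{r} \Id}\bigl((K \rtensorcb H)_{\gamma \rt \hbeta}, (L \htensor{\delta}{\beta} H)_{\delta \rt \hbeta}\bigr)$, whose action on $\gamma \rt \hbeta$ spans $\delta \rt \hbeta$; and commutativity with the dual coactions, $\hdelta_{D} \circ (\rho \rtimes_{r} \Id) = ((\rho \rtimes_{r} \Id) \ast \Id) \circ \hdelta_{C}$, is a routine verification on generators via \eqref{eq:rcp-dual-coaction}, since both sides send $\delta_{C}(c)(1 \rtensorh \ha)$ to $(\delta_{D}(\rho(c)) \rtensorh 1)(1 \rtensorh \hDelta(\ha))$.
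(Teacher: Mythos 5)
Your uniqueness argument and your covariance check are exactly the paper's. The difference lies in how the $*$-homomorphism is produced. The paper does not rebuild $\rho \rtimes_{r} \Id$ from the intertwiners by hand: it first observes (Remark~\ref{remark:rcp-set}) that $C \rtimes_{r} \hA \subseteq C \hfibrecb {\cal L}(H)$ and $D \rtimes_{r} \hA \subseteq D \hfibre{\delta}{\beta} {\cal L}(H)$, then invokes the general existence theorem for fiber-product morphisms \cite[Theorem 3.15]{timmer:cpmu} to obtain $\rho \ast \Id \in \Mor\big((C \hfibrecb {\cal L}(H))_{\gamma \rt \hbeta}, (D \hfibre{\delta}{\beta} {\cal L}(H))_{\delta \rt \hbeta}\big)$, and finally restricts to $C \rtimes_{r} \hA$. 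The characterization $(\rho \ast \Id)(T)\cdot(X \rtensorh Y) = (X \rtensorh Y)\cdot T$ that you exploit is precisely the defining property of that morphism, so the two arguments rest on the same mechanism; yours just re-derives a special case of the cited theorem instead of quoting it.

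The gap is in your existence step. On the algebraic span of $\delta_{C}(C)(1 \rtensorh \hA)$ your prescription is indeed well defined: an operator $y$ satisfying $y(V \rtensorh \Id) = (V \rtensorh \Id)x$ for all $V \in {\cal L}^{\rho}(K_{\gamma},L_{\delta})$ is unique because $\lnspan {\cal L}^{\rho}(K_{\gamma},L_{\delta})\gamma\frakH\rnspan = L$, so $x=0$ forces $y=0$. But ``extended by linearity and continuity'' presupposes that the resulting $*$-homomorphism on this dense $*$-subalgebra is norm-contractive, and that is not automatic: a $*$-homomorphism defined only on a dense $*$-subalgebra of a $C^{*}$-algebra need not be bounded, since the subalgebra need not be spectrally invariant. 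Establishing the estimate $\|(\rho \rtimes_{r} \Id)(x)\| \leq \|x\|$ from the intertwining relation is exactly the nontrivial content of \cite[Theorem 3.15]{timmer:cpmu}; it does not follow formally from the displayed identity. To close the argument you must either supply that bound directly (for instance by realizing the map as a limit of compressions by isometries assembled from ${\cal L}^{\rho}(K_{\gamma},L_{\delta})$, using that $\delta=[{\cal L}^{\rho}(K_{\gamma},L_{\delta})\gamma]$ is a full $C^{*}$-module), or, as the paper does, note the inclusion of the crossed products into the fiber products and restrict the already-constructed morphism $\rho \ast \Id$.
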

\begin{proof}
  By Remark \ref{remark:rcp-set}, $C \rtimes_{r} \hA \subseteq C
  \hfibrecb {\cal L}(H)$ and $D \rtimes_{r} \hA \subseteq D
  \hfibre{\delta}{\beta} {\cal L}(H)$, and by \cite[Theorem
  3.15]{timmer:cpmu}, there exists a morphism
  \begin{align*}
    \rho \ast \Id \in \Mor\big((C \hfibrecb {\cal L}(H))_{\gamma \rt
      \hbeta}, (D \hfibre{\delta}{\beta} {\cal L}(H))_{\delta \rt
      \hbeta}\big).
  \end{align*}
  Denote by $\rho \rtimes_{r} \Id \colon C \rtimes_{r} \hA \to D
  \hfibre{\delta}{\beta} {\cal L}(H)$ the restriction of $\rho \ast
  \Id$. Then 
  \begin{align*}
    (\rho \rtimes_{r} \Id)\big(\delta_{C}(c)(1 \rtensorh \ha)\big) =
    (\rho \ast \Id)(\delta_{C}(c)) \cdot (\rho \ast \Id)(1 \rtensorh
    \ha) = \delta_{D}(\rho(c))(1 \rtensorh \ha)
  \end{align*}
  for all $c \in C$ and $\ha \in \hA$. In particular, $\rho
  \rtimes_{r} \Id \in \Mor\big((C \rtimes_{r} \hA)_{\gamma \rt
    \hbeta}, (D \rtimes_{r} \hA)_{\delta \rt
    \hbeta}\big)$. Finally, Equations \eqref{eq:rcp-dual-coaction} and
  \eqref{eq:rcp-functorial} imply that $\rho \rtimes_{r} \Id$ is
  covariant with respect to $\hdelta_{C}$ and $\hdelta_{D}$.
\end{proof}
\begin{corollary}
  The assignments $(K,C,\gamma,\delta_{C}) \mapsto \big(K \rtensorcb
  H, \, C \rtimes_{r} \hA, \, \gamma \rt \hbeta, \, \hdelta_{C}\big)$
  and $\rho \mapsto \rho \rtimes_{r} \Id$ define a functor from the
  category of coactions of $\Abimod$ to the category of  coactions
  of $\hAbimod$. \qed
\end{corollary}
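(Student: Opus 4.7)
The proof will be a routine verification that the object and morphism assignments respect the categorical laws, since the preceding theorem shows the object assignment lands in coactions of $\hAbimod$ and the preceding proposition shows the morphism assignment produces covariant morphisms. What remains is to check preservation of identities and composition.

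First, I would observe that both verifications can be reduced to comparing $*$-homomorphisms on the linearly dense set $\delta_{C}(C)(1 \rtensorh \hA)$, which spans $C \rtimes_{r} \hA$ by Proposition \ref{proposition:rcp-set}. This is crucial because the preceding proposition characterizes $\rho \rtimes_{r} \Id$ uniquely through its values on elements of the form $\delta_{C}(c)(1 \rtensorh \ha)$ via the formula \eqref{eq:rcp-functorial}, so any two morphisms from $C \rtimes_{r} \hA$ agreeing on these generators must coincide.

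For the identity law, I would take $\rho = \Id_{C}$ as a covariant morphism from $(K,C,\gamma,\delta_{C})$ to itself, and check that $\Id_{C} \rtimes_{r} \Id$ satisfies the defining equation of the identity on $C \rtimes_{r} \hA$. Applying \eqref{eq:rcp-functorial} gives $(\Id_{C} \rtimes_{r} \Id)(\delta_{C}(c)(1 \rtensorh \ha)) = \delta_{C}(c)(1 \rtensorh \ha)$, so by uniqueness $\Id_{C} \rtimes_{r} \Id = \Id_{C \rtimes_{r} \hA}$.

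For the composition law, suppose $\rho$ is a covariant morphism from $(K,C,\gamma,\delta_{C})$ to $(L,D,\delta,\delta_{D})$ and $\sigma$ is a covariant morphism from $(L,D,\delta,\delta_{D})$ to $(M,E,\epsilon,\delta_{E})$. Applying \eqref{eq:rcp-functorial} twice yields
\begin{align*}
  \big((\sigma \rtimes_{r} \Id) \circ (\rho \rtimes_{r} \Id)\big)\big(\delta_{C}(c)(1 \rtensorh \ha)\big)
  &= (\sigma \rtimes_{r} \Id)\big(\delta_{D}(\rho(c))(1 \rtensorh \ha)\big) \\
  &= \delta_{E}(\sigma(\rho(c)))(1 \rtensorh \ha),
\end{align*}
which matches the defining equation for $(\sigma \circ \rho) \rtimes_{r} \Id$. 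Again by uniqueness, the two agree. The main (mild) obstacle is ensuring that $\sigma \circ \rho$ is itself a covariant morphism in the sense of the definition, so that the preceding proposition applies; this is a formal check using the commutativity of the two covariance squares and the compatibility of $\ast$-products of morphisms established in \cite[Theorem 3.15]{timmer:cpmu}. Once that is in place, functoriality is immediate.
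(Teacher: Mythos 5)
Your verification is correct and is exactly the routine argument the paper leaves implicit: the corollary carries an immediate \verb|\qed|, relying on the preceding theorem and proposition, and your check of the identity and composition laws via the uniqueness clause of \eqref{eq:rcp-functorial} together with the density of $\delta_{C}(C)(1 \rtensorh \hA)$ from Proposition \ref{proposition:rcp-set} is the standard way to fill it in. The one point you rightly flag --- that the composite of two covariant morphisms is again covariant, which involves the multiplier extension of nondegenerate morphisms --- is already asserted by the paper in Remarks \ref{remarks:coaction}, so nothing further is needed.
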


\paragraph{Reduced crossed products for coactions of
  $\big(\cbaseosb,H,\hA,\hbeta,\alpha,\hDelta\big)$}

We extend the reduced
crossed product construction to coactions of the concrete Hopf
$C^{*}$-bimodule
$\hAbimod$ as follows:
\begin{definition} \label{definition:rcp-2}
  Let $(K,C,\gamma,\delta_{C})$ be a coaction of $\hAbimod$. The
  associated {\em reduced crossed product} is the $C^{*}$-subalgebra
  $C \rtimes_{\delta_{C},r} A \subseteq {\cal L}(K \rtensorca H)$
  generated by
  \begin{align} \label{eq:rcp-set-2}
    \delta(C)\big(1 \rtensorh \Ad_{U}(A)\big) \subseteq {\cal L}(K
    \rtensorca H).
  \end{align}
  If $\delta_{C}$ is understood, we shortly write $C \rtimes_{r}
  A$ for $C \rtimes_{\delta_{C},r} A$.  
\end{definition}
This construction has the same formal properties like the reduced
crossed product for coactions of $\Abimod$; for the proofs, one simply
replaces the weak $C^{*}$-pseudo-Kac system
$(\alpha,\halpha,\beta,\hbeta,U,V)$ by its predual, applies
Proposition \ref{proposition:balanced-legs}, and uses the results of
the preceding paragraph.
\begin{proposition} \label{proposition:rcp-set-2}
  Let $(K,C,\gamma,\delta_{C})$ be a coaction of
  $\hAbimod$.  Then $C
  \rtimes_{r} A = \lnspan \delta_{C}(C)\big(1 \rtensorh
  \Ad_{U}(A)\big)\rnspan$. \qed
\end{proposition}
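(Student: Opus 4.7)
The plan is to derive this identity from the preceding Proposition~\ref{proposition:rcp-set} by passing to the predual of the fixed weak $C^{*}$-pseudo-Kac system. Concretely, I would first invoke Remark~\ref{remarks:balanced}(ii) to view $(\hbeta,\beta,\alpha,\halpha,U,\checkV)$ as a weak $C^{*}$-pseudo-Kac system in its own right, and then apply Proposition~\ref{proposition:balanced-legs} to translate its legs: $A(\checkV)=\hA$ with $\Delta_{\checkV}=\hDelta$, whereas $\hA(\checkV)=\Ad_{U}(A)$. In particular, the concrete Hopf $C^{*}$-bimodule attached to the $A$-leg of the predual is exactly $\hAbimod$, so the given coaction $(K,C,\gamma,\delta_{C})$ of $\hAbimod$ is automatically a coaction of the predual's $A$-leg bimodule.

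Under this identification, the $C^{*}$-algebra $C\rtimes_{r}A$ of Definition~\ref{definition:rcp-2} coincides with the reduced crossed product of Definition~\ref{definition:rcp} applied to the predual: the generating set $\delta_{C}(C)(1\rtensorh\Ad_{U}(A))$ is literally $\delta_{C}(C)(1\rtensorh\hA(\checkV))$, and the ambient Hilbert space $K\rtensorca H$ agrees with the source space of the predual construction because the predual's ``$\beta$'' is the original $\alpha$. Proposition~\ref{proposition:rcp-set}, applied to the predual system, then yields at once
\begin{align*}
  C\rtimes_{r}A \;=\; \lnspan\delta_{C}(C)\bigl(1\rtensorh\hA(\checkV)\bigr)\rnspan \;=\; \lnspan\delta_{C}(C)\bigl(1\rtensorh\Ad_{U}(A)\bigr)\rnspan,
\end{align*}
which is the desired formula.

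The main thing to be careful about when carrying out this plan is the bookkeeping of factorizations and $C^{*}$-bases under the swap $(\alpha,\halpha,\beta,\hbeta,\cbasesb)\leftrightarrow(\hbeta,\beta,\alpha,\halpha,\cbaseosb)$: I have to verify that the compatibility and nondegeneracy hypotheses placed on $(K,C,\gamma)$ as a concrete $C^{*}$-$\cbaseosb$-algebra (the base appropriate to $\hAbimod$) exactly match those imposed by Definition~\ref{definition:rcp} once it is read inside the predual system. This amounts to a routine but careful inspection using Proposition~\ref{proposition:balanced-legs} and Remark~\ref{remarks:balanced}(ii); no further input is needed.
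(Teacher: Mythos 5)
Your proposal is correct and is precisely the argument the paper intends: the text preceding Definition~\ref{definition:rcp-2} states that all results of this paragraph are proved by replacing the weak $C^{*}$-pseudo-Kac system by its predual $(\hbeta,\beta,\alpha,\halpha,U,\checkV)$, applying Proposition~\ref{proposition:balanced-legs}, and invoking the corresponding results for $\Abimod$ (here Proposition~\ref{proposition:rcp-set}). The only minor point is that the predual being again a \emph{weak $C^{*}$-pseudo-Kac system} (not just a balanced $C^{*}$-pseudo-multiplicative unitary) comes from the symmetry proposition following Lemma~\ref{lemma:weak-kac} rather than from Remark~\ref{remarks:balanced}~ii) alone.
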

\begin{theorem}
  Let $(K,C,\gamma,\delta_{C})$ be a coaction of $\hAbimod$.  There
  exists a coaction $\big(K \rtensorca H, \, C \rtimes_{r} A, \,
  \gamma \rt \halpha, \, \hdelta_{C}\big)$ of $\Abimod$ such that
    \begin{align*}
      \hdelta_{C}\big(\delta_{C}(c)(1 \rtensorh \Ad_{U}(a))\big) =
    \big(\delta_{C}(c) \rtensorh 1\big) \big(1 \rtensorh
    \Ad_{(U \rtensorh 1)}\Delta(a)\big) 
    \end{align*}
    for all $c \in C$ and $a \in A$. If the coaction
    $(H,A,\alpha,\Delta)$ is fine, then also $\big(K \rtensorca H, \, C \rtimes_{r} A,
  \, \gamma \rt \halpha, \, \hdelta_{C}\big)$ is fine.\qed
\end{theorem}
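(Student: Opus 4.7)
My plan is to deduce this statement from the preceding theorem by passing to the predual weak $C^{*}$-pseudo-Kac system $(\hbeta,\beta,\alpha,\halpha,U,\checkV)$ and invoking Proposition \ref{proposition:balanced-legs} as a translation dictionary. The key observation is that, by that proposition, the $A$-side Hopf $C^{*}$-bimodule attached to $\checkV$ is precisely $\hAbimod$ (since $A(\checkV)=\hA$ and $\Delta_{\checkV}=\hDelta$, while the predual $\alpha$- and $\beta$-factorizations are $\hbeta$ and $\alpha$, respectively). Hence $(K,C,\gamma,\delta_{C})$ is, without rewriting, a coaction of the $A$-side bimodule of the predual system.

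Applying Definition \ref{definition:rcp}, Proposition \ref{proposition:rcp-set} and the preceding theorem verbatim inside the predual then produces, first, a reduced crossed product generated by $\delta_{C}(C)(1\rtensorh \hA(\checkV))$; since $\hA(\checkV)=\Ad_{U}(A)$ by Proposition \ref{proposition:balanced-legs}, this coincides with the $C\rtimes_{r} A$ of Definition \ref{definition:rcp-2}, and Proposition \ref{proposition:rcp-set-2} drops out. Second, it produces a dual coaction on $K\rtensor{\gamma}{\frakH}{\alpha} H = K\rtensorca H$ with factorization $\gamma\rt\halpha$, valued in the $\hA$-side Hopf bimodule of the predual, which by Proposition \ref{proposition:balanced-legs} equals $\bigl(\Ad_{U}(A),\Ad_{(U\rtensorh U)}\circ\Delta\circ\Ad_{U}\bigr)$ and is canonically identified with $\Abimod$ via $\Ad_{U}$. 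The explicit formula for $\hdelta_{C}$ then follows by inserting the predual version of \eqref{eq:rcp-dual-coaction} together with the identity $\hDelta_{\checkV}(\Ad_{U}(a))=\Ad_{(U\rtensorh U)}(\Delta(a))$. Fineness is transported by exactly the same mechanism, via the predual analogue of \eqref{eq:rcp-dual-fine}.

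The main obstacle I anticipate is purely bookkeeping: carefully matching factorizations, compatibility relations and fiber-product identifications through the $\check{\,\cdot\,}$ involution, and in particular justifying the rewriting of $\Ad_{(U\rtensorh U)}(\Delta(a))$ as $\Ad_{(U\rtensorh 1)}\Delta(a)$ in the position $1\rtensorh(\cdot)$, where the second copy of $U$ is absorbed by the relevant $C^{*}$-relative tensor product isomorphism between $K\rtensorca H$ and the image of $\Ad_{(1\rtensorh U)}$ applied to the second leg. Once this translation dictionary is laid out, the remaining verifications --- that $\hdelta_{C}$ is a well-defined morphism to the spatial fiber product, that it satisfies the coassociativity diagram, and that the set $\delta_{C}(C)(1\rtensorh \Ad_{U}(A))$ is linearly dense in $C\rtimes_{r} A$ --- are formally identical to their counterparts in the preceding paragraph.
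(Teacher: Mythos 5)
Your proposal is correct and is exactly the argument the paper intends: the text preceding Proposition \ref{proposition:rcp-set-2} states that one proves these statements by replacing the weak $C^{*}$-pseudo-Kac system by its predual $(\hbeta,\beta,\alpha,\halpha,U,\checkV)$, applying Proposition \ref{proposition:balanced-legs}, and reusing the results of the preceding paragraph, which is precisely your translation-dictionary strategy. You even make explicit the one bookkeeping point the paper leaves silent, namely that the $\hA$-side of the predual is carried onto $\Abimod$ by $\Ad_{U}$, which conjugates the last leg and turns $\Ad_{(U\rtensorh U)}(\Delta(a))$ into $\Ad_{(U\rtensorh 1)}(\Delta(a))$.
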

\begin{definition}
  Let $(K,C,\gamma,\delta_{C})$ be a coaction of $\hAbimod$.  Then we
  call $\big(K \rtensorca H, \, C \rtimes_{r} A, \, \gamma \rt
  \halpha, \, \hdelta_{C}\big)$ the associated {\em dual coaction} of
  $\Abimod$.
\end{definition}
\begin{proposition}
 Let $\rho$ be a covariant morphism between a coaction
  $(K,C,\gamma,\delta_{C})$ and a coaction $(L,D,\delta,\delta_{D})$
  of $\hAbimod$. Then
  there exists a unique covariant morphism $\rho \rtimes_{r} \Id$
  between the associated dual coactions such that
  \begin{align*} 
    (\rho \rtimes_{r} \Id)\big(\delta_{C}(c)(1 \rtensorh \Ad_{U}(a))\big) =
    \delta_{D}(\rho(c))(1 \rtensorh \Ad_{U}(a)) 
  \end{align*}
  for all $c \in C$ and $a \in A$.  \qed
\end{proposition}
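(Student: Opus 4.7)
The plan is to transfer the already-established functoriality of the $\hA$-reduced crossed product to the $A$-reduced crossed product via the predual of the weak $C^{*}$-pseudo-Kac system, as signalled by the author's remark that ``for the proofs, one simply replaces the weak $C^{*}$-pseudo-Kac system $(\alpha,\halpha,\beta,\hbeta,U,V)$ by its predual, applies Proposition \ref{proposition:balanced-legs}, and uses the results of the preceding paragraph.'' Concretely, I would replace $(\alpha,\halpha,\beta,\hbeta,U,V)$ by the predual weak $C^{*}$-pseudo-Kac system $(\hbeta,\beta,\alpha,\halpha,U,\checkV)$. By Proposition \ref{proposition:balanced-legs}, the concrete Hopf $C^{*}$-bimodule associated to the ``first leg'' of $\checkV$ in the predual framework is precisely $\hAbimod$, because $A(\checkV) = \hA$ and $\Delta_{\checkV} = \hDelta$. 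Dually, $\hA(\checkV) = \Ad_{U}(A)$, so the reduced crossed product of Definition \ref{definition:rcp} applied in the predual, generated by $\delta_{C}(C)(1 \rtensorh \Ad_{U}(A))$, coincides with the reduced crossed product $C \rtimes_{\delta_{C},r} A$ of Definition \ref{definition:rcp-2}.

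Under this identification, the given covariant morphism $\rho$ between coactions of $\hAbimod$ is a covariant morphism between coactions of the first leg of $\checkV$ in the predual framework. Applying the already-proved functoriality proposition for coactions of $\Abimod$ to the predual then yields a unique covariant morphism $\rho \rtimes_{r} \Id$ between the associated dual coactions satisfying $(\rho \rtimes_{r} \Id)\big(\delta_{C}(c)(1 \rtensorh \ha)\big) = \delta_{D}(\rho(c))(1 \rtensorh \ha)$ for all $c \in C$ and $\ha \in \hA(\checkV) = \Ad_{U}(A)$. Substituting $\ha = \Ad_{U}(a)$ with $a \in A$ gives the desired defining formula, and uniqueness follows from the density statement of Proposition \ref{proposition:rcp-set-2}.

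The only non-mechanical point is to verify that the dual coaction of $\Abimod$ produced by invoking the predual version of the preceding theorem agrees, on generators, with the one explicitly described in the theorem immediately before Definition \ref{definition:rcp-dual-coaction} of the $A$-paragraph. This reduces to matching $\Delta_{\checkV}$ on $A(\checkV) = \hA$ against the expression $\Ad_{(U \rtensorh 1)} \Delta(a)$ appearing in the explicit dual coaction formula; by Proposition \ref{proposition:balanced-legs} we have $\Delta_{\checkV} = \hDelta$ and $\Delta_{\hatV} = \Ad_{(U \rtensorh U)} \circ \hDelta_{V} \circ \Ad_{U}$, and the required twist by $\Ad_{U}$ drops out of the relation between $V$ and $\checkV$. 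I expect the main (but still minor) obstacle to be keeping track of these $\Ad_{U}$- and $\Ad_{(U \rtensorh U)}$-conjugations consistently; once this bookkeeping is done, the covariance of $\rho \rtimes_{r} \Id$ with respect to $\hdelta_{C}$ and $\hdelta_{D}$ follows directly by evaluating on the generators $\delta_{C}(c)(1 \rtensorh \Ad_{U}(a))$.
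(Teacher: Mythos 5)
Your proposal is correct and is exactly the route the paper intends: the paper gives no separate proof of this proposition, relying instead on its blanket remark that one passes to the predual $(\hbeta,\beta,\alpha,\halpha,U,\checkV)$, uses Proposition \ref{proposition:balanced-legs} to identify the predual's legs ($A(\checkV)=\hA$, $\Delta_{\checkV}=\hDelta$, $\hA(\checkV)=\Ad_{U}(A)$), and then invokes the corresponding result for coactions of $\Abimod$. Your additional care about the $\Ad_{U}$- and $\Ad_{(U\rtensorh U)}$-bookkeeping (needed to match the predual's dual coaction with the explicitly stated one) is a point the paper glosses over entirely, so if anything your write-up is more complete than the source.
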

\begin{corollary}
  The assignments $(K,C,\gamma,\delta_{C}) \mapsto \big(K \rtensorca
  H, \, C \rtimes_{r} A, \, \gamma \rt \halpha, \, \hdelta_{C}\big)$
  and $\rho \mapsto \rho \rtimes_{r} \Id$ define a functor from the
  category of coactions of $\hAbimod$ to the category of coactions of
  $\Abimod$. \qed
\end{corollary}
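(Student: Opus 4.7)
The plan is to verify the two functor axioms---preservation of identities and of composition---by reducing each to the uniqueness clause in the preceding proposition, together with the density statement of Proposition~\ref{proposition:rcp-set-2}. That proposition gives $C \rtimes_{r} A = \lnspan \delta_{C}(C)(1 \rtensorh \Ad_{U}(A))\rnspan$, so any $*$-homomorphism out of $C \rtimes_{r} A$ is determined by its values on the generators $\delta_{C}(c)(1 \rtensorh \Ad_{U}(a))$. For identities, taking $\rho = \Id_{C}$ as a covariant morphism from $(K,C,\gamma,\delta_{C})$ to itself, the identity $\Id_{C \rtimes_{r} A}$ visibly fixes each such generator and therefore satisfies the defining formula for $\Id_{C} \rtimes_{r} \Id$; uniqueness then forces $\Id_{C} \rtimes_{r} \Id = \Id_{C \rtimes_{r} A}$.

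For composition, suppose $\rho \colon (K,C,\gamma,\delta_{C}) \to (L,D,\delta,\delta_{D})$ and $\rho' \colon (L,D,\delta,\delta_{D}) \to (M,E,\epsilon,\delta_{E})$ are covariant morphisms. I would first check that the composition $\rho' \circ \rho$, understood via the strict extension of $\rho'$ to $M(D)$ supplied by the extension lemma in Section~2, is again a covariant morphism. This is a routine diagram chase: pasting the two covariance squares for $\rho$ and $\rho'$ yields the covariance square for $\rho' \circ \rho$, once one knows that the fiber product operation is functorial under composition in the sense that $(\rho' \ast \Id) \circ (\rho \ast \Id) = (\rho' \circ \rho) \ast \Id$, which is built into the construction of $\ast$ recalled from \cite[Section~3]{timmer:cpmu}. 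Then both $(\rho' \circ \rho) \rtimes_{r} \Id$ and $(\rho' \rtimes_{r} \Id) \circ (\rho \rtimes_{r} \Id)$ satisfy the defining formula of the preceding proposition with $\rho$ replaced by $\rho' \circ \rho$, sending each generator $\delta_{C}(c)(1 \rtensorh \Ad_{U}(a))$ to $\delta_{E}(\rho'(\rho(c)))(1 \rtensorh \Ad_{U}(a))$, so density plus uniqueness gives equality.

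The argument is thus essentially formal; the only slightly delicate point is the compatibility of strict extensions of nondegenerate morphisms with composition, but this has already been handled in Section~2 (and is implicitly built into the preceding proposition's uniqueness clause, which exists precisely to enable such functoriality arguments). I do not expect any real obstacle beyond carefully unfolding the definitions and invoking these earlier results.
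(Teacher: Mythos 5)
Your argument is correct and matches the paper's intent: the paper marks this corollary as immediate (it carries only a \qed), precisely because functoriality follows from the uniqueness clause of the preceding proposition together with the density statement of Proposition~\ref{proposition:rcp-set-2}, exactly as you spell out. Your remark on composing covariant morphisms via strict extensions is the same point the paper disposes of in Remarks~\ref{remarks:coaction}~ii), where coactions with covariant morphisms are already asserted to form a category.
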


\section{$C^{*}$-pseudo-Kac systems}

To obtain an analogue of Baaj-Skandalis duality \cite{baaj:2}, we
need to refine the notion of a weak $C^{*}$-pseudo-Kac system
as follows. 

As before, we fix the $C^{*}$-base $\cbasesb$ and the Hilbert space
$H$.
\begin{definition} \label{definition:kac}
  We call a balanced $C^{*}$-pseudo-multiplicative unitary
  $(\alpha,\halpha,\beta,\hbeta,U,V)$ a {\em
    $C^{*}$-pseudo-Kac-system} if
  \begin{enumerate}
  \item  the $C^{*}$-pseudo-multiplicative unitaries $V,\checkV,\hatV$
    are regular, and
  \item $(\Sigma(1 \rtensorh U)V)^{3} = 1$ in ${\cal L}(\Hsource)$.
  \end{enumerate}
\end{definition}
\begin{remark} \label{remark:kac}
 In leg notation, the equation $\big(\Sigma(1 \rtensorh
     U)V\big)^{3}=1$ takes the form $(\Sigma U_{\leg{2}} V)^{3} = 1$.
     Conjugating by $\Sigma$ or $V$, we find that this condition is
     equivalent to the relation $( U_{\leg{2}}V\Sigma)^{3}=1$ and to the
     relation $(V\Sigma U_{\leg{2}})^{3}=1$.
\end{remark}
\begin{remark}
  Definition \ref{definition:kac} greatly simplifies the definition of
  a pseudo-Kac system on $C^{*}$-modules given in
  \cite{timmermann} and still covers our main examples. The two
  concepts are related as follows.

  Let $(\alpha,\halpha,\beta,\hbeta,U,V)$ be a $C^{*}$-pseudo-Kac
  system and assume that $\frakBo \cong \frakB^{op}$. Then the
  $C^{*}$-modules $\alpha,\halpha,\beta,\hbeta$, the representations
  $\rho_{\alpha}$, $\rho_{\halpha}$, $\rho_{\beta}$, $\rho_{\hbeta}$,
  the unitaries $U_{\alpha} \colon \alpha \to \halpha$, $U_{\halpha}
  \colon \halpha \to \alpha$, $U_{\beta} \colon \beta \to \hbeta$, $
  U_{\hbeta}\colon \hbeta \to \beta$ together with the family of
  unitaries
    \begin{align*}
      V_{\alpha \lt \alpha} &\colon \alpha {_{\rho_{\hbeta}}\tl}
      \alpha \to \alpha {\tr_{\rho_{\beta}}} \alpha, & V_{\hbeta \rt \beta}
      &\colon \hbeta \tr_{\rho_{\alpha}} \beta \to \hbeta
      {_{\rho_{\alpha}}\tl} \beta, & V_{\halpha \lt \alpha} &\colon
      \halpha {_{\rho_{\hbeta}}\tl} \alpha \to \halpha
      {_{\rho_{\alpha}}\tl} \beta, \\ V_{\hbeta \rt \hbeta} &\colon
      \hbeta \tr_{\rho_{\alpha}} \hbeta \to \alpha \tr_{\rho_{\beta}}
      \hbeta, & V_{\beta \lt \alpha} &\colon \beta {_{\rho_{\hbeta}}\tl}
      \alpha \to \beta {_{\rho_{\alpha}}\tl} \beta, & V_{\hbeta \rt
        \halpha} &\colon \hbeta \tr_{\rho_{\alpha}} \halpha \to \alpha
      \tr_{\rho_{\beta}} \halpha
    \end{align*}
    form a pseudo-Kac system in the sense of \cite[Definition
    2.53]{timmermann}. 
\end{remark}
We shall use the following reformulation of condition ii) in 
Definition \ref{definition:kac}:
\begin{lemma} \label{lemma:kac-condition}
    Let $(\alpha,\halpha,\beta,\hbeta,U,V)$ be a balanced
  $C^{*}$-pseudo-multiplicative unitary. Then
  $(\Sigma U_{\leg{2}}V)^{3}=1$ if and only if $\hatV V\checkV =
  U_{\leg{1}}\Sigma$.
\end{lemma}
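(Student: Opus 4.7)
The plan is to substitute the defining expressions for $\checkV$ and $\hatV$ into $\hatV V \checkV$, use the braiding identity $\Sigma U_{\leg{1}} = U_{\leg{2}} \Sigma$ to unify the occurrences of $\Sigma$ and $U$, and then recognize the middle piece as a power of $S := V\Sigma U_{\leg{2}}$. By Remark \ref{remark:kac}, the condition $(\Sigma U_{\leg{2}} V)^{3} = 1$ is equivalent to $S^{3} = 1$, so working with $S$ translates both sides of the desired biconditional into statements about $S$ alone.

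Using $\checkV = \Sigma U_{\leg{2}} V U_{\leg{2}} \Sigma$ directly, and rewriting $\hatV = \Sigma U_{\leg{1}} V U_{\leg{1}} \Sigma = U_{\leg{2}} \Sigma V \Sigma U_{\leg{2}}$ via the braiding identity, I expect to get
\[
\hatV V \checkV \;=\; U_{\leg{2}} \Sigma \cdot (V\Sigma U_{\leg{2}})(V\Sigma U_{\leg{2}})\, V \cdot U_{\leg{2}} \Sigma \;=\; (U_{\leg{2}} \Sigma)\, S^{2} V\, (U_{\leg{2}} \Sigma).
\]
On the other hand, $U_{\leg{1}} \Sigma = \Sigma U_{\leg{2}}$. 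Multiplying the putative identity $\hatV V \checkV = U_{\leg{1}} \Sigma$ on the left by $\Sigma^{*} U_{\leg{2}}$ and on the right by $\Sigma^{*} U_{\leg{2}}$, and simplifying the outer pieces with $\Sigma U_{\leg{2}} \Sigma^{*} = U_{\leg{1}}$, $[U_{\leg{1}}, U_{\leg{2}}] = 0$, and $U^{2} = 1$, reduces the putative identity to $S^{2} V = U_{\leg{2}} \Sigma^{*}$. Since $S^{-1} V = U_{\leg{2}} \Sigma^{*} V^{*} V = U_{\leg{2}} \Sigma^{*}$, this last equation is equivalent to $S^{2} = S^{-1}$, i.e., to $S^{3} = 1$. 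Both directions of the biconditional fall out of this chain of equivalences.

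The main obstacle is purely bookkeeping: each $\Sigma$ and each $U_{\leg{i}}$ acts between different $C^{*}$-relative tensor products determined by the factorizations $\alpha, \halpha, \beta, \hbeta$, so at every step I must verify that domains and codomains align, using $U\alpha = \halpha$ and $U\beta = \hbeta$. Once this is checked, no analytic content is involved and the lemma reduces to the purely algebraic braid-type identities $\Sigma U_{\leg{2}} \Sigma^{*} = U_{\leg{1}}$, $[U_{\leg{1}}, U_{\leg{2}}] = 0$, and $U^{2} = 1$.
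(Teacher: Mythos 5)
Your proposal is correct and is essentially the paper's own argument: the paper likewise verifies, via the braid relations $\Sigma U_{\leg{2}} = U_{\leg{1}}\Sigma$, $[U_{\leg{1}},U_{\leg{2}}]=0$, $U^{2}=1$, that $\hatV V\checkV$ equals a conjugate of the third power of $\Sigma U_{\leg{2}}V$ (it writes $U_{\leg{1}}U_{\leg{2}}(\Sigma U_{\leg{2}}V)^{3}U_{\leg{2}}\Sigma = \hatV V\checkV$ and reads off the equivalence), which is the same computation as your reduction to $S^{3}=1$ with $S=V\Sigma U_{\leg{2}}$ up to a harmless rearrangement. Your domain/codomain bookkeeping concern is the only point requiring care, and it is handled exactly by the compatibility relations \eqref{eq:pmu-intertwine} and $U\alpha=\halpha$, $U\beta=\hbeta$, as you note.
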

\begin{proof}
  Rearranging the factors in the product $U_{\leg{1}}U_{\leg{2}}\big(\Sigma
    U_{\leg{2}}V\big)^{3} U_{\leg{2}}\Sigma$, we find
   \begin{align*}
     U_{\leg{1}} U_{\leg{2}} (\Sigma U_{\leg{2}}V)^{3} U_{\leg{2}}
     \Sigma = \Sigma U_{\leg{1}} V U_{\leg{1}} \Sigma \cdot V \cdot
     \Sigma U_{\leg{2}} V U_{\leg{2}} \Sigma = \hatV \cdot V \cdot
     \checkV.
   \end{align*}
   Thus, $(\Sigma U_{\leg{2}} V)^{3}$ is equal to $1$ if and only if
   $U_{\leg{1}}\Sigma=U_{\leg{1}}U_{\leg{2}}U_{\leg{2}}\Sigma$ is
   equal to $\hatV V \checkV$.
\end{proof}
The notion of a $C^{*}$-pseudo-Kac system is symmetric in the
following sense:
\begin{proposition}
  Let $(\alpha,\halpha,\beta,\hbeta,U,V)$ be a $C^{*}$-pseudo-Kac
  system. Then also the tuples \eqref{eq:weak-kac-symmetry} are
  $C^{*}$-pseudo-Kac systems.
\end{proposition}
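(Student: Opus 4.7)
The preceding proposition already establishes that the predual, dual and opposite are weak $C^{*}$-pseudo-Kac systems, so I only need to verify the two additional conditions of Definition~\ref{definition:kac} for each tuple: regularity of the three associated $C^{*}$-pseudo-multiplicative unitaries, and the cube relation.

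For the cube condition, the plan is to substitute the explicit formulas $\checkV=\Sigma U_{\leg{2}}VU_{\leg{2}}\Sigma$, $\hatV=\Sigma U_{\leg{1}}VU_{\leg{1}}\Sigma$, and $V^{op}=\Sigma V^{*}\Sigma$ into the expression $\Sigma U_{\leg{2}}(\,\cdot\,)$ and then reduce using the leg relations $\Sigma U_{\leg{2}}\Sigma=U_{\leg{1}}$ and $\Sigma U_{\leg{1}}\Sigma=U_{\leg{2}}$ (obtained from the identities recalled in the excerpt), $\Sigma^{2}=\Id$, and the commutation of $U\rtensorh U$ with $\Sigma$. For the dual, a one-line computation yields $\Sigma U_{\leg{2}}\hatV=V\Sigma U_{\leg{2}}$, so $(\Sigma U_{\leg{2}}\hatV)^{3}=(V\Sigma U_{\leg{2}})^{3}=1$ by Remark~\ref{remark:kac}. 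For the opposite, one finds $\Sigma U_{\leg{2}}V^{op}=U_{\leg{1}}V^{*}\Sigma$; taking the adjoint of the relation $(\Sigma U_{\leg{2}}V)^{3}=1$ gives $(V^{*}U_{\leg{2}}\Sigma)^{3}=1$, and a substitution using $U_{\leg{2}}\Sigma=\Sigma U_{\leg{1}}$ together with a cyclic shift yields $(U_{\leg{1}}V^{*}\Sigma)^{3}=1$, as required.

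The predual requires the most work: substitution gives $\Sigma U_{\leg{2}}\checkV=(U\rtensorh U)\cdot VU_{\leg{2}}\Sigma$, and after moving the $U\rtensorh U$ factors leftward (using $U_{\leg{2}}(U\rtensorh U)=U_{\leg{1}}$ and $\Sigma(U\rtensorh U)=(U\rtensorh U)\Sigma$), the third power collapses to $(U\rtensorh U)\cdot(V\Sigma U_{\leg{2}})^{2}\cdot VU_{\leg{2}}\Sigma$. Applying the identity $(V\Sigma U_{\leg{2}})^{2}=U_{\leg{2}}\Sigma V^{*}$, which is immediate from $(V\Sigma U_{\leg{2}})^{3}=1$, and cancelling $V^{*}V$ leaves $(U\rtensorh U)U_{\leg{2}}\Sigma U_{\leg{2}}\Sigma=(U\rtensorh U)U_{\leg{2}}U_{\leg{1}}=(U\rtensorh U)^{2}=\Id$.

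For regularity, the three $C^{*}$-pseudo-multiplicative unitaries associated with the predual, dual and opposite tuples collectively form a permutation of $\{V,\checkV,\hatV,\Ad_{U\rtensorh U}(V),V^{op},(\checkV)^{op},(\hatV)^{op}\}$. Regularity of $V$, $\checkV$, $\hatV$ is part of the hypothesis, regularity of the three ``op'' unitaries follows via \cite[Lemma 4.4]{timmer:cpmu}, and regularity of $\Ad_{U\rtensorh U}(V)$ is obtained by conjugating both sides of $[\balpha{1}V\kalpha{2}]=[\alpha\alpha^{*}]$ by $U$ and using $U\alpha=\halpha$ to rewrite the result as $[\bhalpha{1}\Ad_{U\rtensorh U}(V)\khalpha{2}]=[\halpha\halpha^{*}]$, which is the regularity condition relative to the tuple $(\halpha,\alpha,\hbeta,\beta,U,\Ad_{U\rtensorh U}(V))$ from Remark~\ref{remarks:balanced}~ii). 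The main obstacle is the cube relation for the predual; everything else reduces to organising the leg-calculus cleanly.
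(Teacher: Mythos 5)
Your argument is correct and follows essentially the same route as the paper's proof: balancedness of the three tuples, regularity bookkeeping via \eqref{eq:balanced-iterate}, \eqref{eq:balanced-op} and the regularity-of-opposites fact from \cite{timmer:cpmu} (the paper cites Remark~4.11 there rather than Lemma~4.4, which concerns well-behavedness), plus unitary conjugation for $\Ad_{(U\rtensorh U)}(V)$, and then the cube relation for each tuple by leg calculus. The only noteworthy difference is that the paper dispatches the predual in one line using Remark~\ref{remark:kac} — the cube condition is equivalent to $(\checkV\Sigma U_{\leg{2}})^{3}=1$ and $\checkV\Sigma U_{\leg{2}}=\Sigma U_{\leg{2}}V$ — so your longer direct computation with the $(U\rtensorh U)$ factors, while correct, is avoidable; likewise your appeal to the weak-Kac-system symmetry proposition is more than you need (and would formally require Proposition~\ref{proposition:kac-weak}, stated later), since Remark~\ref{remarks:balanced}~ii) already gives balancedness.
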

\begin{proof}
  Equations \eqref{eq:balanced-iterate}, \eqref{eq:balanced-op} and
 \cite[Remark 4.11]{timmer:cpmu} imply that the tuples
  \eqref{eq:weak-kac-symmetry} satisfy condition i) of Definition
  \ref{definition:kac}.  To check that they also satisfy condition
  ii), we use Remark \ref{remark:kac}:
  \begin{gather*}
    (\Sigma U_{\leg{2}} \hatV)^{3} = (V \Sigma U_{\leg{2}})^{3} = 1,  \qquad
    (\checkV\Sigma U_{\leg{2}} )^{3} =(\Sigma U_{\leg{2}}V)^{3} = 1,
    \\
    (U_{2}V^{op}\Sigma)^{3} = (U_{\leg{2}}\Sigma V^{*})^{3} = 
    \big((V\Sigma U_{\leg{2}})^{3}\big)^{*}=
    1. \qedhere
  \end{gather*}
\end{proof}

The following two propositions are essential for our  duality theorem:
\begin{proposition} \label{proposition:kac-weak}
    Let $(\alpha,\halpha,\beta,\hbeta,U,V)$ be a $C^{*}$-pseudo-Kac
    system. Then it is a weak $C^{*}$-pseudo-Kac system.
\end{proposition}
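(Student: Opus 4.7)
There are two things to check: that $V$ is well-behaved, and that the equivalent conditions of Lemma \ref{lemma:weak-kac} hold. Well-behavedness of $V$ is immediate from its regularity via \cite[Theorem 4.14]{timmer:cpmu}; the same theorem shows that $\checkV$ and $\hatV$ are well-behaved as well, so that Proposition \ref{proposition:balanced-legs} is available in full.

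My plan is to verify condition i)(b) of Lemma \ref{lemma:weak-kac}, namely $(1 \rtensorh \ha)\hatV = \hatV(1 \rtensorh \ha)$ for every $\ha \in \hA(V)$, and then to obtain condition ii) by symmetry: applied to the predual $(\hbeta,\beta,\alpha,\halpha,U,\checkV)$, which is itself a $C^{*}$-pseudo-Kac system, Proposition \ref{proposition:balanced-legs} (in particular the identification $\hA(\checkV) = \Ad_{U}(A(V))$) translates condition i) for $\checkV$ directly into condition ii) for $V$. To establish condition i)(b), I would compare two descriptions of $\hDelta_{V}(\ha)$. By definition, $\hDelta_{V}(\ha) = V^{*}(1 \rtensorh \ha)V$, while Proposition \ref{proposition:balanced-legs} gives $\hDelta_{V} = \Delta_{\checkV}$ and hence $\hDelta_{V}(\ha) = \checkV(\ha \rtensorh 1)\checkV^{*}$, using $\ha \in \hA(V) = A(\checkV)$. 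Equating the two expressions and rearranging yields
\begin{align*}
  (1 \rtensorh \ha) = V\checkV(\ha \rtensorh 1)\checkV^{*}V^{*}.
\end{align*}
Lemma \ref{lemma:kac-condition} now replaces $V\checkV$ by $\hatV^{*}U_{\leg{1}}\Sigma$, and correspondingly $\checkV^{*}V^{*}$ by $\Sigma^{*}U_{\leg{1}}\hatV$. The flip identity $\Sigma(\ha \rtensorh 1)\Sigma^{*} = 1 \rtensorh \ha$, together with the fact that $U_{\leg{1}}$ acts on a leg disjoint from $1 \rtensorh \ha$ and that $U^{2}=1$, collapses the middle factor and produces $(1 \rtensorh \ha) = \hatV^{*}(1 \rtensorh \ha)\hatV$, which is the commutation sought.

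The main obstacle I foresee is combinatorial rather than algebraic: $\Sigma$, $U_{\leg{1}}$, $V$, $\checkV$ and $\hatV$ each move between different $C^{*}$-relative tensor products of $H$ with itself, so at each step one must confirm that the expression $\ha \rtensorh 1$ or $1 \rtensorh \ha$ is defined on the current fiber product and that the flip/intertwining identities hold there. The intertwining relations catalogued in Remarks \ref{remarks:balanced} iii) and the stability properties of $\hA(V)$ recorded in Remarks \ref{remarks:balanced} iv) should supply everything needed for this bookkeeping.
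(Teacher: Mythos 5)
Your proposal is correct and rests on the same two pillars as the paper's proof: the reformulation $\hatV V\checkV = U_{\leg{1}}\Sigma$ from Lemma \ref{lemma:kac-condition} and the interchange relation between $V$ and $\checkV$ from Lemma \ref{lemma:balanced} (which you access in its sliced form $\hDelta_{V}=\Delta_{\checkV}$ via Proposition \ref{proposition:balanced-legs}). The only differences are cosmetic: the paper verifies condition i)(a) by manipulating the three-leg operators directly and then repeats the argument for ii)(a), whereas you verify the sliced condition i)(b) and obtain ii) by passing to the predual — a legitimate and slightly tidier packaging of the same computation.
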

\begin{proof}
  Since $V$ is regular, it is well-behaved. We show that condition
  i)(a) in Lemma \ref{lemma:weak-kac} holds. By Lemma
  \ref{lemma:balanced}, $\checkV_{\leg{12}}V_{\leg{13}} =
  V_{\leg{13}}V_{\leg{23}}\checkV_{\leg{12}}$. We multiply this
  equation by $V_{\leg{12}}$ on the left and by $\Sigma_{\leg{12}}$ on
  the right, use the pentagon equation for $V$, and obtain
    \begin{align*}
      V_{\leg{12}}\checkV_{\leg{12}}\Sigma_{\leg{12}} V_{\leg{23}} =
      V_{\leg{12}}\checkV_{\leg{12}}V_{\leg{13}}\Sigma_{\leg{12}}  =
      V_{\leg{12}}V_{\leg{13}}V_{\leg{23}}\checkV_{\leg{12}}\Sigma_{\leg{12}}  =
      V_{\leg{23}}V_{\leg{12}}\checkV_{\leg{12}}\Sigma_{\leg{12}}.
    \end{align*}
    By Lemma \ref{lemma:kac-condition}, we can replace
    $V_{\leg{12}}\checkV_{\leg{12}}\Sigma_{\leg{12}}$ by
    $\hatV_{\leg{12}}^{*}U_{\leg{1}}$, and find that
    $\hatV_{\leg{12}}^{*}U_{\leg{1}}V_{\leg{23}} =
    V_{\leg{23}}\hatV_{\leg{12}}^{*}U_{\leg{1}}$.  Since
    $\hatV_{\leg{12}}$ is unitary and
    $U_{\leg{1}}V_{\leg{23}}=V_{\leg{23}}U_{\leg{1}}$, we can conclude
    $\hatV_{\leg{12}}V_{\leg{23}}=V_{\leg{23}}\hatV_{\leg{12}}$, that
    is, condition i)(a) in Lemma \ref{lemma:weak-kac} holds.
    A similar argument shows that condition ii)(a) in Lemma
    \ref{lemma:weak-kac} holds.
\end{proof}

\begin{proposition} \label{proposition:kac-compact}
  Let $(\alpha,\halpha,\beta,\hbeta,U,V)$ be a $C^{*}$-pseudo-Kac
  system. Then $\lnspan A(V)\hA(V)\rnspan = \lnspan \halpha\halpha^{*}
  \rnspan$.
\end{proposition}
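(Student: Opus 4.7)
The plan combines the Kac identity with regularity of $V$. By Lemma \ref{lemma:kac-condition} we have $\hatV V \checkV = U_{\leg 1}\Sigma$, hence $V = \hatV^{*} U_{\leg 1}\Sigma\checkV^{*}$. Since $\halpha = U\alpha$, regularity of $V$ ($\lnspan\balpha{1} V \kalpha{2}\rnspan = \lnspan\alpha\alpha^{*}\rnspan$) gives $\lnspan\halpha\halpha^{*}\rnspan = U\lnspan\balpha{1} V \kalpha{2}\rnspan U$. Thus the claim reduces to
\[
\lnspan A\hA\rnspan \;=\; U\lnspan\balpha{1} V \kalpha{2}\rnspan U.
\]

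For the inclusion $\supseteq$, I conjugate a generator $\balpha{1}[\xi] V\kalpha{2}[\eta']$ by $U$ on both sides and substitute $V = \hatV^{*} U_{\leg 1}\Sigma\checkV^{*}$. Using the intertwining relations of Remark \ref{remarks:balanced} iii)---in particular $\checkV^{-1}_{*}(\alpha\lt\alpha) = \alpha\lt\hbeta$ and the analogous relation $\hatV^{-1}_{*}(\beta\rt\beta) = \beta\lt\beta$---the factor $\checkV^{*}\kalpha{2}[\eta']$ rewrites via a $\khbeta{1}$-type slice on $\Hsource$, and $\balpha{1}[\xi]\hatV^{*}$ via a $\bbeta{2}$-type slice on $\Hrange$. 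The remaining central $U_{\leg 1}\Sigma$, together with the outer $U$'s, collapses via the identities already exploited in the proof of Proposition \ref{proposition:balanced-legs} to produce an $A$-slice followed by an $\hA$-slice, hence an element of $A\hA$. For the inclusion $\subseteq$ I reverse the process: starting from a generator $\balpha{1}[\xi] V\khbeta{1}[\xi']\bbeta{2}[\eta] V\kalpha{2}[\eta']$ of $A\hA$, I substitute the Kac decomposition of $V$ in one of the two occurrences; the intertwining relations then convert $\checkV^{*}\khbeta{1}$ and $\bbeta{2}\hatV^{*}$ into $\khalpha{1}$- and $\bhalpha{2}$-type slicings, and after absorbing $U_{\leg 1}\Sigma$ against the remaining $V$ the product takes the form $U\balpha{1}[\cdot]V\kalpha{2}[\cdot]U$, which lies in $U\lnspan\alpha\alpha^{*}\rnspan U = \lnspan\halpha\halpha^{*}\rnspan$ by regularity.

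The main obstacle is the bookkeeping of $C^{*}$-factorizations: each of $V,\hatV,\checkV$ acts between a different pair of the spaces $\Hsource,\Hrange,\hatHsource,\hatHrange,\checkHsource,\checkHrange$ and permutes the four factorizations $\alpha,\halpha,\beta,\hbeta$ in different ways. At every substitution one must identify which intertwining identity of Remarks \ref{remarks:balanced} iii)--iv) applies and on which space, and must verify that the interaction of the central factor $U_{\leg 1}\Sigma$ with the outer slices (or with the outer $U$'s) produces precisely the $U$-conjugation that converts $\lnspan\alpha\alpha^{*}\rnspan$ into $\lnspan\halpha\halpha^{*}\rnspan$ via $\halpha = U\alpha$.
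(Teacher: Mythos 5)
Your starting point coincides with the paper's: the Kac condition (you use it in the form $\hatV V\checkV = U_{\leg{1}}\Sigma$ from Lemma \ref{lemma:kac-condition}; the paper substitutes the equivalent identity $V^{*}=\Sigma U_{\leg{2}}V\Sigma U_{\leg{2}}V\Sigma U_{\leg{2}}$) together with regularity of $V$ correctly reduces the claim to comparing $\lnspan A\hA\rnspan$ with $U\lnspan\balpha{1}V\kalpha{2}\rnspan U$. The gap lies in the conversion steps you describe only in outline. The intertwining relations of Remarks \ref{remarks:balanced} iii) are statements about images of product $C^{*}$-factorizations: for instance $\checkV_{*}(\alpha\lt\hbeta)=\alpha\lt\alpha$ yields $\lnspan\checkV^{*}\kalpha{2}\,\gamma\rnspan=\lnspan\khbeta{2}\,\gamma\rnspan$ only when the slice is composed on the right with the appropriate $C^{*}$-factorization $\gamma$ of the first leg. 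They do not let you rewrite $\checkV^{*}\kalpha{2}[\eta']$ or $\balpha{1}[\xi]\hatV^{*}$ when these are composed with an arbitrary operator such as $U$ or a generic element of $\hA$; and since multiplying a closed subspace of operators by a nondegenerately acting $C^{*}$-algebra does not reproduce that subspace, you cannot simply insert $\lnspan\alpha\alpha^{*}\rnspan$ to bring the relations into play. So the phrases ``rewrites via a $\khbeta{1}$-type slice'' and ``collapses via the identities already exploited'' assert exactly the steps that do not follow from the tools you cite.

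This is where the paper does genuine extra work that has no counterpart in your plan. It first absorbs $\hA$ on the right, $\lnspan\halpha\halpha^{*}\rnspan=\lnspan\halpha\halpha^{*}\hA\rnspan$ (a consequence of $\lnspan\hA\halpha\rnspan=\halpha$, Equation \eqref{eq:ha-halpha}), so that after the Kac substitution the problematic factor appears as $V\kalpha{2}\hA$ rather than $V\kalpha{2}$; it then invokes a separate, regularity-based fact (Lemma \ref{lemma:kac-aux}: $\lnspan V\kalpha{2}\hA(V)\rnspan=\lnspan\kbeta{2}\hA(V)\rnspan$, proved via the commutative diagrams behind the regularity argument in \cite{timmer:cpmu}) to convert $V\kalpha{2}\hA$ into $\kbeta{2}\hA$, after which $\Sigma U_{\leg{2}}\kbeta{2}=\khbeta{1}$ produces $\lnspan\balpha{1}V\khbeta{1}\hA\rnspan=\lnspan A\hA\rnspan$ in one stroke. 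Without an analogue of these two steps (or some other mechanism by which regularity enters a second time, beyond the identity $\lnspan\balpha{1}V\kalpha{2}\rnspan=\lnspan\alpha\alpha^{*}\rnspan$), your argument cannot be completed, for either inclusion.
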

We need the  following lemma:
\begin{lemma} \label{lemma:kac-aux} Let $V \colon \Hsource \to
  \Hrange$ be a regular $C^{*}$-pseudo-multiplicative unitary. Then
  $\lnspan V\kalpha{2}\hA(V) \rnspan = \lnspan
  \kbeta{2}\hA(V)\rnspan$.
\end{lemma}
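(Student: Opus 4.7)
By definition of $\hA = \hA(V)$, we have $\hA = [\bbeta{2}V\kalpha{2}]$, so the assertion is equivalent to the identity
\[
\bigl[V\kalpha{2}\bbeta{2}V\kalpha{2}\bigr] = \bigl[\kbeta{2}\bbeta{2}V\kalpha{2}\bigr]
\]
of closed subspaces of $\mathcal{L}(H,\Hrange)$. The strategy is modelled on the classical fact for regular multiplicative unitaries that $V(\hA \otimes H) = \hA \otimes H$, which there follows from $V \in M(\hA \otimes A)$ and the nondegeneracy of $A$ on $H$. Here we realise the analogous statement by a three-fold tensor product computation combined with the regularity hypothesis.

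Concretely, for $\xi,\xi''\in\alpha$ and $\eta\in\beta$, I would lift a generic element $V|\xi\rangle_{\leg{2}}\langle\eta|_{\leg{2}}V|\xi''\rangle_{\leg{2}}$ of $V\kalpha{2}\hA$ into the three-fold space $\Hone$ and view it as a partial matrix coefficient of $V_{\leg{12}}V_{\leg{23}}$ obtained by pairing $|\xi\rangle_{\leg{2}}|\xi''\rangle_{\leg{3}}$ on the right and $\langle\eta|_{\leg{3}}$ on the left. The pentagon equation $V_{\leg{12}}V_{\leg{13}}V_{\leg{23}} = V_{\leg{23}}V_{\leg{12}}$, equivalently $V_{\leg{13}}V_{\leg{23}} = V_{\leg{12}}^{*}V_{\leg{23}}V_{\leg{12}}$, then reorganises this into a matrix coefficient in which the inner slice has the form $\balpha{1}V\kalpha{2}$ (after relabeling legs). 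At this point the regularity assumption $[\balpha{1}V\kalpha{2}] = [\alpha\alpha^{*}]$ collapses that slice to an element of $[\alpha\alpha^{*}] \subseteq \mathcal{L}(H)$; the remaining outer factor is a $\beta$-ket in leg~$2$ paired with a fresh copy of $\bbeta{2}V\kalpha{2} = \hA$. Reassembling yields an element of $\kbeta{2}\hA$, which gives the inclusion $V\kalpha{2}\hA \subseteq \kbeta{2}\hA$.

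For the reverse inclusion, I would run the same argument on $V^{*}$, using that regularity for $V$ is equivalent by adjointness to $[\balpha{2}V^{*}\kalpha{1}] = [\alpha\alpha^{*}]$, and that $\kbeta{2}\hA = \kbeta{2}\bbeta{2}V\kalpha{2}$ lifts analogously to a three-fold slice that the pentagon converts into $V\kalpha{2}$ times an element of $\hA$.

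The principal obstacle will be the bookkeeping of $C^{*}$-factorizations across the intermediate three-fold spaces $\Hone,\Htwo,\Hfive$ and their variants: one must verify at each step that the ket, bra, and $V$-slice maps are defined in the appropriate fiber product and that all compatibility conditions among $\rho_{\alpha},\rho_{\beta},\rho_{\hbeta},\rho_{\halpha}$ are respected. The relations $V_{*}(\alpha\lt\alpha) = \alpha\rt\alpha$, $V_{*}(\hbeta\rt\beta) = \hbeta\lt\beta$, and the other intertwining identities of~\eqref{eq:pmu-intertwine} together with Remark~\ref{remarks:balanced}~iii) make all the required slices available, but the combinatorial overhead is substantial; once it is carried out, the identity reduces to a single application of the pentagon equation together with a single application of the regularity relation.
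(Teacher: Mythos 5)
Your overall strategy --- reduce the claim to the pentagon equation plus the regularity hypothesis via leg-numbering manipulations in a three-fold relative tensor product --- is the strategy of the paper's proof, which consists of exactly two commutative diagrams: one cited from \cite[Proof of Proposition 4.12]{timmer:cpmu}, where regularity enters and which yields $\lnspan V\kalpha{2}\hA(V)\rnspan = \lnspan \balpha{2}V_{\leg{12}}^{*}\kbeta{3}\kbeta{2}\rnspan$, and one displayed diagram that is pure leg bookkeeping. So the plan points in the right direction, but as written it has a genuine gap at its central step. The composite $V_{\leg{12}}V_{\leg{23}}$ whose ``partial matrix coefficient'' you want to form (apply $V_{\leg{23}}$ first, then $V_{\leg{12}}$) is not a legitimate composition in this setting: $V_{\leg{23}}=\Id\rtensorh V$ maps $\Hone$ to $\Hfive = H \rtensor{\hbeta}{\frakH}{\alpha\rt\alpha}(\sHrange)$, and on $\Hfive$ legs $1$ and $2$ do not pair to form a copy of $\Hsource$ (leg $1$ pairs with leg $3$ there), so $V_{\leg{12}}$ is undefined on it --- this is precisely why the pentagon diagram \eqref{eq:pmu-pentagon} has to route through $\Id\rtensorh\Sigma$, $\Sigma_{\leg{23}}$ and the auxiliary spaces. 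Before the pentagon can be applied, the expression must be rewritten as one of the admissible compositions, and this changes which kets and bras appear. The paper sidesteps the problem by first replacing $\hA(V)$ by $\hA(V)^{*}=\lnspan\balpha{2}V^{*}\kbeta{2}\rnspan$ (legitimate since $\hA(V)$ is a $C^{*}$-algebra for regular $V$), after which the relevant lift is $V_{\leg{12}}^{*}\kbeta{3}\kbeta{2}$ landing in $\Hfour$, where the bra $\balpha{2}$ needed to close the computation is actually available; it is not clear that your version, which keeps $\hA(V)=\lnspan\bbeta{2}V\kalpha{2}\rnspan$ on the right, closes up at all.

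The second gap is in the reassembly: after regularity collapses the inner slice to $\lnspan\alpha\alpha^{*}\rnspan$, you assert that the remaining factors recombine into $\kbeta{2}\hA(V)$. That requires absorbing $\lnspan\alpha\alpha^{*}\rnspan$ into $\hA(V)$ or into $\kbeta{2}$, i.e.\ something like $\lnspan\alpha\alpha^{*}\hA(V)\rnspan=\hA(V)$ or $\lnspan\hA(V)\alpha\rnspan=\alpha$. None of the listed structural properties of $\hA(V)$ gives this (note that $\lnspan\alpha\alpha^{*}\rnspan$ is not $\rho_{\alpha}(\frakBo)$); it is again a consequence of regularity, but it is a separate fact that must be proved or cited, in the spirit of the absorption $\lnspan\halpha\halpha^{*}\hA\rnspan=\lnspan\halpha\halpha^{*}\rnspan$ used in the proof of Proposition \ref{proposition:kac-compact}. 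So while the ingredients you name are the right ones, the key identity is only gestured at, one of its two building blocks does not typecheck as stated, and a substantive auxiliary absorption lemma is missing.
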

\begin{proof}
 The first commutative diagram in \cite[Proof of
  Proposition 4.12]{timmer:cpmu} shows 
  \begin{align*}
    \lnspan V\kalpha{2}\hA(V) \rnspan =
    \lnspan V\kalpha{2}\hA(V)^{*} \rnspan
    = \lnspan \balpha{2}V_{\leg{12}}^{*} \kbeta{3}\kbeta{2}\rnspan,
  \end{align*}
  and the following commutative diagram shows that this  equals
  $\lnspan \kbeta{2} \hA(V)^{*}\rnspan = \lnspan \kbeta{2}
  \hA(V)\rnspan$:
  \begin{gather*}\smalldiagram
    \xymatrix@C=35pt{
      {H} \ar[r]^(0.42){\kbeta{2}}  \ar `d/0pt[dd]^{\hA(V)^{*}} [ddr] &
      {\Hrange}  \ar[r]^(0.42){\kbeta{3}} \ar[d]^{V^{*}} &
      {\Hthree} \ar[d]^{V_{\leg{12}}^{*}} \\
       & {\Hsource} \ar[r]^(0.42){\kbeta{3}}
      \ar[d]^{\balpha{2}} &
      {\Hfour}  \ar[d]^{\balpha{2}} \\
      & H \ar[r]^(0.42){\kbeta{2}}&  {\Hrange}.
    } \qedhere
  \end{gather*}
\end{proof}

\begin{proof}[Proof of Proposition \ref{proposition:kac-compact}]
  Since $V$ is regular and $V^{*}=\Sigma U_{\leg{2}} V\Sigma
  U_{\leg{2}} V\Sigma U_{\leg{2}}$, 
  \begin{align*}
    \lnspan \halpha \halpha^{*}\rnspan = \lnspan
    U\alpha\alpha^{*}U\rnspan  & =
    \lnspan U \balpha{2} V^{*}\kalpha{1} U\rnspan \\ &=
    \lnspan U \balpha{2}\Sigma U_{\leg{2}} V\Sigma
    U_{\leg{2}} V\Sigma U_{\leg{2}} \kalpha{1} U \rnspan 
    = \lnspan \balpha{1} V\Sigma U_{\leg{2}}V\kalpha{2}\rnspan. 
  \end{align*}
  We multiply  on the right by $\hA:=\hA(V)$, use Equation
 \eqref{eq:ha-halpha} and Lemma \ref{lemma:kac-aux}, and find
 \begin{align*}
    \lnspan \halpha \halpha^{*}\rnspan =  \lnspan \halpha \halpha^{*}
    \hA\rnspan &=
 \lnspan \balpha{1} V\Sigma U_{\leg{2}}V\kalpha{2} \hA\rnspan \\ &=
 \lnspan \balpha{1} V\Sigma U_{\leg{2}}\kbeta{2} \hA\rnspan  =
 \lnspan \balpha{1} V\khbeta{1} \hA\rnspan = \lnspan A(V)
 \hA(V)\rnspan. \qedhere
 \end{align*}
\end{proof}

\paragraph{The duality theorem}
Let $(\alpha,\halpha,\beta,\hbeta,U,V)$ be a $C^{*}$-pseudo-Kac
system, put
\begin{align*}
    \hA&:=\hA(V), &\hDelta&:=\hDelta_{V}, & A&:=A(V),
    &\Delta:=\Delta_{V},
\end{align*}
and write $\hAbimod$ and
$\Abimod$ for the concrete Hopf
$C^{*}$-bimodules $\big(\cbaseosb,H,\hA,\hbeta,\alpha,\hDelta\big)$ and
$\big(\cbasesb,H, A,\alpha,\beta,\Delta\big)$, respectively.

Before we can state the duality theorem, we need some preliminaries.
Let $(K,C,\gamma,\delta_{C})$ be a coaction of $\Abimod$.  By
Lemma \ref{lemma:ind-functorial}, $\lnspan \kbeta{2}C\bbeta{2}\rnspan
\subseteq {\cal L}(K \rtensorcb H)$  is a $C^{*}$-algebra and there
exists a $*$-homomorphism
\begin{align*}
  \Ind_{\beta}(\delta_{C}) \colon \lnspan \kbeta{2}C\bbeta{2}\rnspan
  \to \lnspan \kbeta{3}\delta_{C}(C)\bbeta{3}\rnspan \subseteq {\cal
    L}(K \rtensorcb \Hrange).
\end{align*} 
Likewise, if $(K,C,\gamma,\delta_{C})$ is a coaction of $\hAbimod$,
then
$\lnspan \kalpha{2}C\balpha{2}\rnspan \subseteq {\cal L}(K \rtensorca
H)$ is a $C^{*}$-algebra and there exists a $*$-homomorphism
\begin{align*}
  \Ind_{\alpha}(\delta_{C}) \colon \lnspan
  \kalpha{2}C\balpha{2}\rnspan \to \lnspan
  \kalpha{3}\delta_{C}(C)\balpha{3} \rnspan \subseteq {\cal L}(K
  \rtensorca \Hsource).
\end{align*}
\begin{theorem}
  \begin{enumerate}
  \item Let $(K,C,\gamma,\delta_{C})$ be a fine coaction of
    $\Abimod$.  There exists a natural isomorphism
    \begin{align*}
      C \rtimes_{r} \hA \rtimes_{r} A \cong \lnspan
      \kbeta{2} C  \bbeta{2}\rnspan \subseteq {\cal L}(K
      \rtensorcb H)
    \end{align*}
    which identifies the bidual coaction
    $\widehat{\widehat{\delta}}_{C}$ on $C \rtimes_{r} \hA \rtimes_{r}
    A$ with the map
    $\Ad_{(1 \rtensorh \Sigma\hatV)} \circ \Ind_{\beta}(\delta_{C})$.
\item Let $(K,C,\gamma,\delta_{C})$ be a fine coaction of
  $\hAbimod$.  There exists a natural isomorphism
    \begin{align*}
      C \rtimes_{r} A \rtimes_{r} \hA \cong \lnspan \kalpha{2} C
      \balpha{2}\rnspan \subseteq {\cal L}(K \rtensorca H)
    \end{align*}
    which identifies the bidual coaction
    $\widehat{\widehat{\delta}}_{C}$ on $C \rtimes_{r} A \rtimes_{r}
    \hA$ with the map $\Ad_{(1 \rtensorh \Sigma V)} \circ
    \Ind_{\alpha}(\delta_{C})$.
  \end{enumerate}
\end{theorem}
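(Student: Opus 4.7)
I plan to prove (i) in detail and deduce (ii) by applying (i) to the predual $C^{*}$-pseudo-Kac system $(\hbeta,\beta,\alpha,\halpha,U,\checkV)$. A short computation based on Equation~\eqref{eq:balanced-iterate} shows that the $\hatV$ associated to the predual equals $V$, while by Proposition~\ref{proposition:balanced-legs} its $A$- and $\hA$-legs are $\hA$ and $\Ad_{U}(A)$; this swap of roles converts (i) into (ii).

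For (i), first unpack the iterated crossed product. Combining Proposition~\ref{proposition:rcp-set-2} with the formula~\eqref{eq:rcp-dual-coaction} for $\hdelta_{C}$ and Proposition~\ref{proposition:rcp-set} gives
\begin{align*}
C \rtimes_{r} \hA \rtimes_{r} A = \lnspan \big(\delta_{C}(c)\rtensorh 1\big)\big(1 \rtensorh \hDelta(\ha)\big)\big(1 \rtensorh 1 \rtensorh \Ad_{U}(a)\big)\rnspan
\end{align*}
as $c \in C$, $\ha \in \hA$, $a \in A$ vary, inside ${\cal L}(K \rtensorcb \Hsource)$. The heart of the argument is a conjugation by a suitable unitary $\Phi$ built from $\hatV$ and a flip that, at the level of the outer $H$-with-$H$ factor, acts as $\Sigma\hatV$. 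Substituting $\hDelta(\ha) = \checkV(\ha \rtensorh 1)\checkV^{*}$ (Proposition~\ref{proposition:balanced-legs}), eliminating $\checkV$ via the Kac identity $\hatV V \checkV = U_{\leg{1}}\Sigma$ (Lemma~\ref{lemma:kac-condition}), and then cancelling $V$ and $V^{*}$ using the commutation $V(1 \rtensorh \Ad_{U}(a)) = (1 \rtensorh \Ad_{U}(a))V$ (Lemma~\ref{lemma:weak-kac} ii)(c)), the conjugate of $\hDelta(\ha)(1 \rtensorh \Ad_{U}(a))$ collapses to a single-leg operator built from the product $\ha \cdot \Ad_{U}(a)$. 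Proposition~\ref{proposition:kac-compact} applied to the predual Kac system then identifies $\lnspan \hA \cdot \Ad_{U}(A)\rnspan$ with $\lnspan \beta\beta^{*}\rnspan$, so these single-leg operators span exactly the matrix-unit space $\lnspan\kbeta{2}\bbeta{2}\rnspan$ in the relevant leg.

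Combining this collapse with the fineness of $\delta_{C}$ (which gives $\lnspan\delta_{C}(C)\kbeta{2}\rnspan = \lnspan\kbeta{2}C\rnspan$) produces
\begin{align*}
\Phi\,(C \rtimes_{r} \hA \rtimes_{r} A)\,\Phi^{*} = \lnspan\kbeta{2} C \bbeta{2}\rnspan \rtensorh 1,
\end{align*}
yielding the claimed natural isomorphism. To identify $\widehat{\widehat{\delta}}_{C}$ with $\Ad_{(1 \rtensorh \Sigma\hatV)} \circ \Ind_{\beta}(\delta_{C})$, expand both sides on the generators $\delta_{C}(c)(1 \rtensorh \ha)(1 \rtensorh 1 \rtensorh \Ad_{U}(a))$ using the explicit formula for $\widehat{\widehat{\delta}}_{C}$ obtained by iterating~\eqref{eq:rcp-dual-coaction}, invoke the defining property of $\Ind_{\beta}(\delta_{C})$ from Lemma~\ref{lemma:ind-functorial}, and match up terms via the pseudo-Kac identity. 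The main obstacle is the central collapse computation: tracking the $C^{*}$-factorizations $\alpha,\halpha,\beta,\hbeta$ and leg indices through the iterated relative tensor products required to verify it rigorously, which replaces the cleaner Sweedler-style manipulation in the original Baaj-Skandalis proof.
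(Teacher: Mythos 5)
Your outline follows the paper's skeleton quite closely: you unpack the iterated crossed product as $\lnspan \big(\delta_{C}(C)\rtensorh 1\big)\big(1 \rtensorh \hDelta(\hA)\big)\big(1 \rtensorh 1 \rtensorh \Ad_{U}(A)\big)\rnspan$, collapse the last two legs by a conjugation, identify $\lnspan \hA \cdot \Ad_{U}(A)\rnspan = \lnspan \beta\beta^{*}\rnspan$ via Proposition \ref{proposition:kac-compact} applied to the predual, invoke fineness, check the coaction identification on generators, and deduce (ii) by passing to the predual. All of that matches the paper. But there is a genuine gap at the central step: you claim that a single unitary $\Phi$ conjugates $C \rtimes_{r}\hA\rtimes_{r}A$ onto $\lnspan\kbeta{2}C\bbeta{2}\rnspan\rtensorh 1$. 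No such unitary exists in general. What the spatial conjugation (by $1 \rtensorh V$ on the last two legs; your manipulation with $\checkV$, the Kac identity and the commutation from Lemma \ref{lemma:weak-kac} amounts to the same computation) actually yields is $\lnspan \delta_{C}^{(2)}(C)\,\kbeta{3}\bbeta{3}\rnspan$, where $\delta_{C}^{(2)}=(\delta_{C}\ast\Id)\circ\delta_{C}$. This algebra still acts nontrivially on all three legs, and its first two legs carry $\delta_{C}(C)$, not $C$. The remaining passage $\delta_{C}^{(2)}(c)(1 \rtensorh 1 \rtensorh T)\mapsto \delta_{C}(c)(1 \rtensorh T)$ is \emph{not} a unitary conjugation: it is the inverse of the injective $*$-homomorphism $\delta_{C}\ast\Id$ restricted to these subalgebras, and this is precisely where the \emph{injectivity} half of the fineness hypothesis is needed. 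Your argument only ever uses the density half, $\lnspan\delta_{C}(C)\kbeta{2}\rnspan=\lnspan\kbeta{2}C\rnspan$; since a coaction need not be spatially implemented, the claimed $\Phi$ cannot exist and the proof does not close without reinstating this non-spatial collapse. (A smaller symptom of the same confusion: a unitary acting as $\Sigma\hatV$ on the outer two legs has source $K\rtensorcb\Hrange$, not $K\rtensorcb\Hsource$ where the iterated crossed product lives, so it cannot be the conjugating unitary in the first place; $\Ad_{(1\rtensorh\Sigma\hatV)}$ enters only afterwards, in the comparison with $\Ind_{\beta}(\delta_{C})$.)

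Once the non-spatial collapse via injectivity of $\delta_{C}$ is restored, the rest of your plan — the generator-by-generator comparison of $\widehat{\widehat{\delta}}_{C}$ with $\Ad_{(1 \rtensorh \Sigma\hatV)}\circ\Ind_{\beta}(\delta_{C})$ using the three $\Ad_{\Sigma\hatV}$ relations, and the reduction of (ii) to (i) through the predual $(\hbeta,\beta,\alpha,\halpha,U,\checkV)$ — coincides with the paper's argument and is sound.
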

\begin{proof}
  i) By Proposition \ref{proposition:rcp-set-2}, Definition
  \ref{definition:rcp-dual-coaction}, and Proposition
  \ref{proposition:rcp-set}, the iterated crossed product $C
  \rtimes_{r} \widehat{A} \rtimes_{r} A$ is equal to
   \begin{align*}
     \lnspan \big(\delta_{C}(C) \rtensorh 1\big)
     \big(1 \rtensorh \hDelta(\hA)\big)
     \big(1 \rtensorh 1 \otimes \Ad_{U}(A)\big) \rnspan \subseteq {\cal
     L}(K \rtensorcb \Hsource).
   \end{align*}
   By definition of $\Delta$ and $\widehat{\Delta}$ and by
   Lemma \ref{lemma:weak-kac}, conjugation by 
   $1 \rtensorh V$ maps this $C^{*}$-algebra isomorphically onto
   \begin{align*}
     \lnspan \delta_{C}^{(2)}(C) \big(1 \rtensorh 1 \rtensorh
     \widehat{A}\big)\big(1 \rtensorh 1 \rtensorh \Ad_{U}(A)\big)
     \rnspan \subseteq {\cal L}(K \rtensorcb \Hrange),
   \end{align*}
   where $\delta_{C}^{(2)} = (\Id\ast \Delta) \circ \delta_{C} = (\delta_{C}
   \ast \Id) \circ \delta_{C}$. Since $\delta_{C}$ is injective, the formula
   \begin{align*}
     \delta_{C}^{(2)}(c)(1 \rtensorh 1 \rtensorh T) \mapsto
     \delta_{C}(c)(1 \rtensorh T), \quad\text{where } c \in C, \ T \in
     {\cal L}(H),
   \end{align*}
   defines an isomorphism of this $C^{*}$-algebra onto
   \begin{align} \label{eq:dual-1}
     \lnspan \delta_{C}(C) \big(1 \rtensorh \widehat{A} \cdot
     \Ad_{U}(A)\big)\rnspan \subseteq {\cal L}(K \rtensorcb H).
   \end{align}
   By Proposition \ref{proposition:balanced-legs} and Proposition
   \ref{proposition:kac-compact}, applied to the $C^{*}$-pseudo-Kac
   system $(\hbeta,\beta,\alpha,\halpha,U,\checkV)$,  we have $\lnspan
   \widehat{A} \Ad_{U}(A)\rnspan = \lnspan A(\checkV)
   \widehat{A}(\checkV)\rnspan = \lnspan \beta \beta^{*}\rnspan$.
   We insert this relation into \eqref{eq:dual-1}, use the fact that
   $\delta_{C}$ is a fine coaction, and find
   \begin{align*}
     C \rtimes_{r} \widehat{A} \rtimes_{r} A \cong
     \lnspan \delta(C) \kbeta{2}\bbeta{2}\rnspan = \lnspan \kbeta{2} C
     \bbeta{2}\rnspan \subseteq {\cal L}(K \rtensorcb H).
   \end{align*}
   This isomorphism identifies an element of the form
   \begin{gather}\label{eq:kac-one}
     \big(\delta_{C}(c) \rtensorh 1\big)\big(1 \rtensorh
     \hDelta(\ha)\big)\big(1 \rtensorh 1 \rtensorh
     \Ad_{U}(a)\big)  \in  C\rtimes_{r} \widehat{A}
     \rtimes_{r} A
     \shortintertext{with}
     \label{eq:kac-two}
     \delta_{C}(c)\big(1 \rtensorh \hat{a}\cdot \Ad_{U}(a)\big)
       \in  \lnspan \kbeta{2}C\bbeta{2}\rnspan.
   \end{gather}
   The bidual coaction $\widehat{\widehat{\delta}}_{C}$  maps the
   element \eqref{eq:kac-one} to
   \begin{align*}
     \big(\delta_{C}(c) \rtensorh 1 \rtensorh 1\big)\big(1 \rtensorh
     \widehat{\Delta}(\hat{a}) \rtensorh 1\big)\big(1 \rtensorh 1
     \rtensorh \Ad_{(U \rtensorh 1)}(\Delta(a))\big),
   \end{align*}
   and the map $\Ad_{(1 \rtensorh \Sigma \hatV)} \circ
   \Ind_{\beta}(\delta_{C})$ sends the element \eqref{eq:kac-two} to
   \begin{multline*}
     \Ad_{(1 \rtensorh \Sigma\hatV)}\big(\delta_{C}^{(2)}(c) (1
     \rtensorh 1 \rtensorh \hat{a} \cdot \Ad_{U}(a))\big) =
     \big(\delta_{C}(c) \rtensorh 1\big)\big(1 \rtensorh \hat{a} \rtensorh
     1\big)\big(1 \rtensorh \Ad_{(U\rtensorh 1)}(\Delta(a))\big);
   \end{multline*}
   here, we used the following relations:
   \begin{itemize}
   \item $\Ad_{\Sigma \hatV}(\Delta(b)) = \Ad_{\Sigma}(1 \rtensorh
     b)=b \rtensorh 1$ for all $b \in A$ by Proposition
     \ref{proposition:balanced-legs};
   \item $\Ad_{\Sigma \hatV}(1 \rtensorh \ha)= \Ad_{\Sigma}(1
     \rtensorh \ha)= \ha \rtensorh 1$ by Lemma
     \ref{lemma:weak-kac};
   \item $\Ad_{\Sigma\hatV}(1 \rtensorh \Ad_{U}(a)) = \Ad_{(U
       \rtensorh 1)}\!\big(\Ad_{V}(1 \rtensorh a)\big) = \Ad_{(U
       \rtensorh 1)}(\Delta(a))$ by definition of $\Delta$.
   \end{itemize} 
   Therefore, the bidual coaction $\widehat{\widehat{\delta}}_{C}$ on
   $C \rtimes_{r} \widehat{A}\rtimes_{r} A$ corresponds to the map
   $\Ad_{(1 \rtensorh \Sigma\hatV)} \circ \Ind_{\beta}(\delta_{C})$.

   \smallskip
   
   ii) The proof is similar to the proof of i), simply replace the
   $C^{*}$-pseudo-Kac system
$(\alpha,\halpha,\beta,\hbeta,U,V)$ by its predual 
$(\hbeta,\beta,\alpha,\halpha,U,\checkV)$.
 \end{proof}

\section{The $C^{*}$-pseudo-Kac system of a locally compact
  groupoid}

The prototypical example of a $C^{*}$-pseudo-Kac system is the one
associated to a locally compact groupoid. The underlying
$C^{*}$-pseudo-multiplicative unitary was described in
\cite{timmer:cpmu}.  For background on groupoids, Haar measures, and
quasi-invariant measures, see \cite{renault} or \cite{paterson}.

\paragraph{The data}
Let $G$ be a locally compact, Hausdorff, second countable groupoid. We
denote its unit space by $G^{0}$, its range map by $r_{G}$, its source
map by $s_{G}$, and put $G^{u}:=r_{G}^{-1}(\{u\})$,
$G_{u}:=s_{G}^{-1}(u)$ for each $u \in G^{0}$. 

We assume that $G$ has a left Haar system $\lambda$, and denote the
associated right Haar system by $\lambda^{-1}$. Let $\mu$ be a measure
on $G^{0}$ and denote by $\nu$ the measure on $G$ given by
 \begin{align*}
   \int_{G} f \,d \nu &:= \int_{G^{0}} \int_{G^{u}} f(x) \, d\lambda^{u}(x)
\,   d\mu(u) \quad \text{for all } f \in C_{c}(G).
 \end{align*}
 The push-forward of $\nu$ via the inversion map $G \to G$, $x \mapsto
 x^{-1}$, is denoted by $\nu^{-1}$; evidently,
 \begin{align*}
   \int_{G} f d\nu^{-1} &= \int_{G^{0}}  \int_{G_{u}} f(x)
   d\lambda^{-1}_{u}(x) \, d\mu(u).
 \end{align*}
 We assume that the measure $\mu$ is quasi-invariant, i.e., that $\nu$
 and $\nu^{-1}$ are equivalent. Note that there always exist
 sufficiently many quasi-invariant measures \cite{renault}.  We denote
 by $D:=d\nu/d\nu^{-1}$ the Radon-Nikodym derivative. 

\paragraph{The $C^{*}$-base and the Hilbert space}
 The measure $\mu$ defines a tracial proper weight on the
 $C^{*}$-algebra $C_{0}(G^{0})$, which we denote by $\mu$
 again. Moreover, we denote by $\cbasesb$ the $C^{*}$-base associated
 to $\mu$ (see Section \ref{section:basics}); thus,
 $\frakH=L^{2}(G^{0},\mu)$. Note that $\cbasesb=\cbaseosb$ because
 $C_{0}(G^{0})$ is commutative.
 Put $H:=L^{2}(G,\nu)$.

\paragraph{The $C^{*}$-factorizations}
 The space $C_{c}(G)$ forms a pre-$C^{*}$-module over $C_{0}(G^{0})$
 with respect to the structure maps
 \begin{align*}
   \begin{aligned}
     \langle \xi'|\xi\rangle(u)&= \int_{G^{u}}
     \overline{\xi'(x)}\xi(x) d\lambda^{u}(x), & (\xi f)(x) &=
     \xi(x)f(r_{G}(x)),
   \end{aligned}
 \end{align*}
and also with respect to the structure maps
\begin{align*}
  \begin{aligned}
    \langle \xi'|\xi\rangle(u)&= \int_{G_{u}} \overline{\xi'(x)}\xi(x)
    d\lambda^{-1}_{u}(x), & (\xi f)(x) &= \xi(x)f(s_{G}(x)).
  \end{aligned}
\end{align*}
 Denote the completions of these  pre-$C^{*}$-modules by
$L^{2}(G,\lambda)$ and $L^{2}(G,\lambda^{-1})$, respectively.
By \cite[Proposition 5.1]{timmer:cpmu},
  there exist isometric embeddings
  \begin{align*}
    j\colon L^{2}(G,\lambda) &\to {\cal L}(\frakH,H) &&\text{and}& \hat j \colon
    L^{2}(G,\lambda^{-1}) &\to {\cal L}\big(\frakH,H\big)
  \end{align*}
  such that for all $\xi \in C_{c}(G)$,  $\zeta \in L^{2}(G^{0},\mu)$, $x \in G$,
  \begin{align*}
    \big(j(\xi) \zeta\big)(x) &= \xi(x)\zeta(r_{G}(x)), &
    \big(\hat j(\xi)\zeta\big)(x) &= \xi(x) D^{-1/2}(x) \zeta(s_{G}(x)).
  \end{align*}
  Moreover, by \cite[Proposition 5.1]{timmer:cpmu}, the images
  \begin{align*}
    \alpha:=\beta:=j(L^{2}(G,\lambda)) \quad \text{and} \quad
    \halpha:=\hbeta := \hat j(L^{2}(G,\lambda^{-1}))
  \end{align*}
  are compatible $C^{*}$-factorizations of $H$ with respect to
  $\cbasesb$, the maps $j$ and $\hat j$ are unitary as maps of
  $C^{*}$-modules over $C_{0}(G^{0}) \cong \frakB$, and for all $x \in
  G$, $\xi \in C_{c}(G)$, and $f \in C_{0}(G^{0})$,
  \begin{align*}
    \big(\rho_{\alpha}(f)\xi\big)(x) &:=
    f\big(r_{G}(x)\big)\xi(x), & 
    \big(\rho_{\halpha}(f)\xi\big)(x) &:=
    f\big(s_{G}(x)\big) \xi(x)
  \end{align*}

\paragraph{The $C^{*}$-pseudo-multiplicative unitary}

  By \cite{vallin:2} and \cite[Proposition 2.14]{timmer:cpmu}, the
  Hilbert spaces $\Hsource$ and $\Hrange$ can be described as follows.
  Define a measure $\nu^{2}_{s,r}$ on $G^{2}_{s,r}:=\{(x,y)\in G
  \times G \mid s(x)=r(y)\}$ by
 \begin{gather*}
   \int_{G^{2}_{s,r}} \!\! f\, d\nu^{2}_{s,r} := \int_{G^{0}}
   \int_{G^{u}} \int_{G^{s_{G}(x)}} f(x,y) \, d\lambda^{s_{G}(x)}(y)
   \, d\lambda^{u}(x) \, d\mu(u),
 \end{gather*}
 and a measure $\nu^{2}_{r,r}$ on $G^{2}_{r,r}:=\{ (x,y) \in
 G^{2}\mid r_{G}(x)=r_{G}(y)\}$  by
 \begin{gather*}
   \int_{G^{2}_{r,r}} \!\! g\, d\nu^{2}_{r,r} := \int_{G^{0}}
   \int_{G^{u}}\int_{G^{u}} g(x,y)\, d\lambda^{u}(y)\, d\lambda^{u}(x)\,
   d\mu(u),
 \end{gather*}
where $f \in C_{c}(G^{2}_{s,r})$ and $g\in C_{c}(G^{2}_{r,r})$. By
\cite{vallin:2} and \cite[Theorem 5.2]{timmer:cpmu},
there exist isomorphisms
\begin{align*}
  \Hsource &\cong
    L^{2}\big(G^{2}_{s,r},\nu^{2}_{s,r}\big), &
    \Hrange &\cong    
    L^{2}\big(G^{2}_{r,r},\nu^{2}_{r,r}\big)
\end{align*}
and a $C^{*}$-pseudo-multiplicative unitary $V \colon \Hsource \to
\Hrange$ such that, with respect to these isomorphisms, $(V\zeta)(x,y)
= \zeta(x,x^{-1}y)$ for all $\zeta \in L^{2}(G^{2}_{s,r},\,
\nu^{2}_{s,r})$ and $(x,y) \in G^{2}_{r,r}$.

\paragraph{The $C^{*}$-pseudo-Kac system}

By definition of the Radon-Nikodym derivative $D=d\nu/d\nu^{-1}$,
there exists a unitary $U \in {\cal L}(H)$  such that
\begin{align*}
  (U \xi)(x) := \xi(x^{-1}) D(x)^{-1/2} \quad \text{for all } x \in
 G, \xi \in C_{c}(G).
\end{align*}
This unitary is a symmetry because for all $\xi \in C_{c}(G)$ and
$\nu$-almost all $x \in G$,
\begin{align*}
  (U^{2}\xi) (x) = (U\xi)(x^{-1}) D(x)^{-1/2} = \xi(x) D(x)^{1/2}
  D(x)^{-1/2} = \xi(x);
\end{align*}
here, we used the relation $D(x)^{-1}=D(x^{-1})$
\cite[p.\ 23]{renault}, \cite[Eq.\ (3.7)]{paterson}.

\begin{theorem}
  $(\alpha,\halpha,\beta,\hbeta,V,U)$ is a $C^{*}$-pseudo-Kac system.
\end{theorem}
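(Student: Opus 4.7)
The plan is to verify the three conditions in Definition \ref{definition:kac}: that $(\alpha,\halpha,\beta,\hbeta,U,V)$ is a balanced $C^{*}$-pseudo-multiplicative unitary, that $V,\checkV,\hatV$ are regular, and that $(\Sigma(1\rtensorh U)V)^{3}=1$. Pairwise compatibility of $\alpha,\halpha,\beta,\hbeta$ is proved in \cite[Proposition 5.1]{timmer:cpmu}, and since in this example $\alpha=\beta$ and $\halpha=\hbeta$, the condition $U\beta=\hbeta$ reduces to $U\alpha=\halpha$. The latter follows from the identity $Uj(\xi)=\hat j(\check \xi)$, where $\check\xi(x):=\xi(x^{-1})$, which one obtains by inserting the formulas for $j$, $\hat j$, $U$ and using $r_{G}(x^{-1})=s_{G}(x)$.

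The main computational content is the cubic relation. I would transport the operator $T:=\Sigma(1\rtensorh U)V$ to $L^{2}(G^{2}_{s,r},\nu^{2}_{s,r})$ via the isomorphism $\Hsource\cong L^{2}(G^{2}_{s,r},\nu^{2}_{s,r})$, and compute its explicit formula on functions. Using the groupoid multiplication and the cocycle identities $D(gh)=D(g)D(h)$ and $D(g^{-1})=D(g)^{-1}$, one arrives at a formula of the shape
\[
(T\zeta)(x,y)=\zeta(y,y^{-1}x^{-1})D(x)^{-1/2}\quad\text{for }(x,y)\in G^{2}_{s,r}.
\]
Iterating three times, the identity $(xy)^{-1}=y^{-1}x^{-1}$ (valid whenever $s(x)=r(y)$) returns the arguments to $(x,y)$, and the accumulated Radon-Nikodym factors telescope to $1$, so that $T^{3}=\Id$ as required.

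Given the cubic relation, Lemma \ref{lemma:kac-condition} yields $\hatV V\checkV=U_{\leg{1}}\Sigma$. Combining this with the pentagon equation for $V$ (from \cite[Theorem 5.2]{timmer:cpmu}) and diagram chasing as in the proof of Proposition \ref{proposition:kac-weak} and Lemma \ref{lemma:balanced}, one obtains the pentagon equations for $\checkV$ and $\hatV$, which completes the verification that the tuple is a balanced $C^{*}$-pseudo-multiplicative unitary. Regularity of $V$ in the groupoid setting is established in \cite{timmer:cpmu}; regularity of $\checkV$ and $\hatV$ is then obtained either by direct computation of the defining spaces $\lnspan\bhbeta{1}\checkV\khbeta{2}\rnspan$ and $\lnspan\balpha{1}\hatV\khalpha{2}\rnspan$ on functions, or by transporting regularity of $V$ through the relations $\checkV=\Sigma U_{\leg{2}}VU_{\leg{2}}\Sigma$ and $\hatV=\Sigma U_{\leg{1}}VU_{\leg{1}}\Sigma$.

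The main obstacle is the bookkeeping in the cubic computation: the flip $\Sigma$ between two distinct flipped internal tensor products and the operator $U_{\leg{2}}$ each entail measure-theoretic identifications between $L^{2}$ spaces over different fibre products of $G$, and the Radon-Nikodym factors introduced by $U_{\leg{2}}$ and by the change of measure implicit in $\Sigma$ must be tracked at each step so that they combine with the factors $D(x)^{-1/2}$ to yield the cancellation $D(y^{-1}x^{-1})^{-1/2}D(y)^{-1/2}D(x)^{-1/2}=1$ which forces $T^{3}=\Id$.
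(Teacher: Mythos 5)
Your verification of the cubic relation is exactly the paper's: the formula $(T\zeta)(x,y)=\zeta(y,y^{-1}x^{-1})D(x)^{-1/2}$ and the cocycle identity $D(x)D(y)D(y^{-1}x^{-1})=1$ are precisely what the paper computes, and your remarks on compatibility and on $Uj(\xi)=\hat\jmath(\check\xi)$ are fine. The genuine gap is in how you propose to verify that $\checkV$ and $\hatV$ are regular $C^{*}$-pseudo-multiplicative unitaries, i.e.\ condition iii) of the definition of a balanced $C^{*}$-pseudo-multiplicative unitary and condition i) of Definition \ref{definition:kac}. You want to deduce the pentagon equations for $\checkV$ and $\hatV$ from the cubic relation and the pentagon for $V$ by ``diagram chasing as in the proof of Proposition \ref{proposition:kac-weak} and Lemma \ref{lemma:balanced}''. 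That is circular: the proof of Lemma \ref{lemma:balanced} begins by inserting $\checkV=U_{\leg{1}}WU_{\leg{1}}$ \emph{into the pentagon equation for $\checkV$}, and Proposition \ref{proposition:kac-weak} presupposes a $C^{*}$-pseudo-Kac system, hence that $\checkV$ and $\hatV$ already satisfy the pentagon equation. Nor is the implication available in the abstract: already in the Baaj--Skandalis framework the multiplicativity of $\hat V$ is an independent axiom of a Kac system, not a consequence of the pentagon for $V$ together with $(\Sigma(1\otimes U)V)^{3}=1$.

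The paper closes this point by an extra computation that your proposal omits: $U_{\leg{1}}VU_{\leg{1}}=V^{*}$, equivalently $\hatV=\Sigma U_{\leg{1}}VU_{\leg{1}}\Sigma=\Sigma V^{*}\Sigma=V^{op}$, checked pointwise using $D(x^{-1})^{-1/2}D(x)^{-1/2}=1$. Since the opposite of a regular $C^{*}$-pseudo-multiplicative unitary is again one, this gives at once that $\hatV$ is $C^{*}$-pseudo-multiplicative and regular, and $\checkV=(U\rtensorh U)\hatV(U\rtensorh U)$ then inherits both properties. Your fallbacks for regularity are also not sufficient as stated: conjugating by $U_{\leg{2}}$ and $\Sigma$ turns the space $\lnspan\balpha{1}V\kalpha{2}\rnspan$ governing regularity of $V$ into a space of the form $\lnspan\bbeta{2}V\khbeta{1}U\rnspan$, which involves different $C^{*}$-factorizations, so regularity does not simply ``transport'' through $\checkV=\Sigma U_{\leg{2}}VU_{\leg{2}}\Sigma$ (and the space you write down for $\hatV$ is not the one occurring in the definition of regularity for the dual tuple). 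Replacing this part of your argument by the identity $\hatV=V^{op}$ repairs the proof.
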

\begin{proof}
  By \cite[Proposition 5.3]{timmer:cpmu}, the
  $C^{*}$-pseudo-multiplicative unitary $V$ is regular.

  We claim that $\hatV=\Sigma U_{\leg{1}}VU_{\leg{1}}\Sigma$ is equal to
  $V^{op}=\Sigma V^{*} \Sigma$.  Indeed, for all $\zeta \in
  L^{2}(G^{2}_{r,r},\nu^{2}_{r,r})$ and  $(x,y) \in G^{2}_{s,r}$,
  \begin{align*}
    (U_{\leg{1}}VU_{\leg{1}}  \zeta)(x,y) 
    &= (VU_{\leg{1}}  \zeta)(x^{-1},y)D(x)^{-1/2} \\
    &= (U_{\leg{1}}  \zeta)(x^{-1},xy)D(x)^{-1/2} 
    = \zeta(x,xy) D(x^{-1})^{-1/2} D(x)^{-1/2}.
  \end{align*}
  Since $ D(x^{-1})^{-1/2} D(x)^{-1/2}=1$ for $\nu$-almost all $x
  \in G$ \cite[p.\ 23]{renault}, \cite[Eq.\ (3.7)]{paterson}, we can
  conclude $U_{\leg{1}}VU_{\leg{1}}=V^{*}$, and the claim follows.

  In particular, $\hatV=V^{op}$ is a regular
  $C^{*}$-pseudo-multiplicative unitary \cite[Remark
  4.11]{timmer:cpmu}.

  To finish the proof, we only need to show that the map $Z:=\Sigma
  U_{\leg{2}}V \colon \Hsource \to \Hsource$ satisfies
  $Z^{3}=1$. For all $\zeta \in L^{2}(G^{2}_{s,r},\nu^{2}_{s,r})$
  and $(x,y) \in G^{2}_{s,r}$,
  \begin{align*}
    (\Sigma U_{\leg{2}} V \zeta)(x,y) = (V\zeta)(y,x^{-1})D(x)^{-1/2} =
    \zeta(y,y^{-1}x^{-1}) D(x)^{-1/2},
  \end{align*}
  and therefore,
  \begin{align*}
    (Z^{3}\zeta)(x,y) &= (Z^{2}\zeta) (y,y^{-1}x^{-1})
    D(x)^{-1/2} \\
    &= (Z\zeta)(y^{-1}x^{-1},x y y^{-1}) \big(D(x)D(y)\big)^{-1/2}
    \\
    &= \zeta(x,x^{-1} xy) \big(D(x)D(y)D(y^{-1}x^{-1})\big)^{-1/2}.
  \end{align*}
  Since $D(x)D(y)D(y^{-1}x^{-1}) = 1$ for $\nu^{2}_{s,r}$-almost all
  $(x,y)\in G^{2}_{s,r}$ \cite{hahn} (but see also
  \cite[p. 89]{paterson}), we can conclude $Z=\Id$.
  \end{proof}

\def\cprime{$'$}

\end{document}